\title{Regular del Pezzo surfaces with irregularity}
\author{Zachary Maddock}
\date{April 18, 2013}
\DeclareFontFamily{OMS}{rsfs}{\skewchar\font'60}
\DeclareFontShape{OMS}{rsfs}{m}{n}{<-5>rsfs5 <5-7>rsfs7 <7->rsfs10 }{}
\DeclareSymbolFont{rsfs}{OMS}{rsfs}{m}{n}
\DeclareSymbolFontAlphabet{\scr}{rsfs}
 \renewcommand{\theequation}{\arabic{section}.\arabic{subsection}.\arabic{equation}}
\newcommand{\Q}{\mathbb{Q}}
\newcommand{\G}{\mathbb{G}}
\newcommand{\A}{\mathbb{A}}
\newcommand{\F}{\mathbb{F}}
\renewcommand{\P}{\mathbb{P}}
\newcommand{\im}{\imagecommand}
\newcommand{\eps}{\varepsilon}
\newcommand{\inv}{^{-1}}
\newcommand{\surject}{\twoheadrightarrow}
\newcommand{\OO}{\mbox{$\mathcal{O}$}}
\newcommand{\sB}{\mathcal{B}}
\newcommand{\sE}{\scr{E}}
\newcommand{\sF}{\scr{F}}
\newcommand{\sI}{\scr{I}}
\newcommand{\sL}{\scr{L}}
\newcommand{\sX}{\mathcal{X}}
\newcommand{\sZ}{\mathcal{Z}}
\DeclareMathOperator{\Hom}{Hom}
\DeclareMathOperator{\Spec}{Spec}
\DeclareMathOperator{\imagecommand}{Im}
\newcounter{repeatcounter}
\newtheorem{theorem}[equation]{Theorem}
\newtheorem{lemma}[equation]{Lemma}
\newtheorem{proposition}[equation]{Proposition}
\newtheorem{corollary}[equation]{Corollary}
\newtheorem{remark}[equation]{Remark}
\newtheorem{question}[equation]{Question}
\theoremstyle{definition}
\newtheorem{definition}[equation]{Definition}
\newtheorem{definition/lemma}[equation]{Definition/Lemma}
\newcommand{\comment}[1]{}
\DeclareMathOperator{\Sym}{Sym}
\DeclareMathOperator{\td}{td}
\DeclareMathOperator{\ch}{ch}
\newcommand{\red}{\textrm{red}}
\newcommand{\sing}{\textrm{sing}}
\newcommand{\ex}{\mathcal X}
\newcommand{\zee}{\mathcal Z}
\newcommand{\Frobenius}{\mathbf F}
\begin{document}

\begin{abstract}
  We construct the first examples of regular del Pezzo
  surfaces $X$ for which \mbox{$h^1(X, \OO_X) > 0$.}  We also find a
  restriction 
  on the integer pairs that are possible as the anti-canonical degree $K_X^2$
  and irregularity $h^1(X,\OO_X)$ of such a surface.
\end{abstract}

\maketitle

\setcounter{tocdepth}{1}
{
\tableofcontents
}

\setcounter{section}{0}
\section*{Introduction}

\subsection{Regular varieties}
Any variety defined over a finitely generated extension $k$ of a
perfect (e.g. algebraically closed) field $\F$ can be viewed 
as the generic fibre of a morphism of $\F$-varieties  $\sX \to \sB$
 such that $k$ is the function field of the base $\sB$.
In this way, the geometry of varieties
 over imperfect fields 
is relevant to 
the understanding of
 the birational geometry of varieties
over algebraically closed fields of positive
characteristic.  
One main difficulty that arises
is that, unlike over perfect fields,
the notions of smoothness and regularity diverge: a smooth variety is
necessarily regular, but a regular variety may not be smooth.

\begin{definition}
 A
variety $X$   
is defined to be \emph{regular} provided that the local coordinate
ring $\OO_{X,x}$ 
is a regular local ring at all points $x \in X$.
  A $k$-variety $X$ is 
\emph{smooth} over $k$ provided that it is
geometrically regular (recalling that a $k$-variety $X$ is said to
satisfy a property \emph{geometrically} if the base change $X_{\bar
  k}$ to the algebraic closure satisfies the given
property) . 
\end{definition}

The notion of smoothness is well-behaved, due largely 
to the fact that a $k$-variety $X$ is smooth if and only if the
cotangent sheaf $\Omega_{X/k}$ is a vector bundle of rank equal to the
dimension of $X$.
Regularity, like smoothness, is a local
property, and can be described in terms of the latter as follows:
 a $k$-variety $X$ is regular if and only if 
there exists a smooth $\F$-variety $\sX$ 
and a morphism $\sX \to \sB$ of which $X$ is
the generic fibre.  
In characteristic $0$, a general fibre of a morphism between smooth
varieties is smooth, yet
in positive characteristic it is common for such morphisms 
to admit no smooth fibres.  In fact,
the collection of generic fibres of 
morphisms between smooth $\F$-varieties
that admit no smooth fibres
precisely comprises the
regular, non-smooth varieties over finitely generated field extensions
of $\F$.
A standard example 
is the generic
fibre of the family $(y^3 = x^2 + t) \subseteq \A^2 \times
\A^1$
 of cuspidal plane curves, parameterized
by the affine coordinate $t$ over a field of
characteristic $2$.

\subsection{New results}
Our study focuses on regular del Pezzo surfaces, a class of 
varieties that, 
as we discuss in \S\ref{subsection-minimal-model}, arises
naturally in the context of the minimal model program. 
\begin{definition}\label{defn-del-Pezzo}
  A \emph{del Pezzo scheme} over a field $k$ is defined to be a
  2-dimensional, projective, Gorenstein scheme $X$ of finite-type over
  $k = H^0(X,\OO_X)$ which is \emph{Fano}, that is, for which
  the inverse of the dualizing sheaf, $\omega_X\inv$, is an ample line
  bundle.  A \emph{del Pezzo surface} is a del Pezzo
  scheme that is an integral scheme.
\end{definition}
This paper answers affirmatively the question of whether there exist
regular del Pezzo surfaces $X$ 
 that are geometrically non-normal or
geometrically non-reduced
by constructing examples of each type which have positive irregularity
$h^1(X, \OO_X) = 1$.  
We also find a characteristic-dependent restriction on the
anti-canonical degree of regular del 
Pezzo surfaces that have a given positive irregularity $q := h^1(X, \OO_X) > 0$.
The main result (represented graphically in
Figure~\ref{graph-figure}) can be concisely summarized as follows:\\

\noindent \textbf{Main Theorem.}
  \begin{enumerate}
  \item \emph{ There exist regular del Pezzo surfaces, $X_1$ and
  $X_2$, with irregularity $h^1(X_i, \OO_{X_i}) = 1$ and of degrees
    $K_{X_1}^2 = 1$ and 
    $K_{X_2}^2 = 2$.  The surface $X_1$ is
  geometrically integral and defined over the field
  $\F_2(\alpha_0, \alpha_1, \alpha_2)$ while $X_2$ is geometrically
  non-reduced and defined over the index-$2$ subfield 
  $\F_2(\alpha_i \alpha_j: 0 \leq i,j \leq 3) \subseteq \F_2(\alpha_0,
  \alpha_1,  \alpha_2, \alpha_3)$.} \label{item-main-theorem-construction}  
  \item  \emph{If $X$ is a normal, local complete intersection (l.c.i.) del
    Pezzo surface (e.g. a regular del Pezzo surface)  
  with irregularity $q > 0$ and anti-canonical degree $d = K_X^2$ over a field of
  characteristic $p$, then} 
  \begin{equation}\label{inequality}
    q \geq \frac {d(p^2-1)}{6}. 
  \end{equation} \label{item-main-theorem-bound}
  \end{enumerate}

\begin{figure}[h]
  \includegraphics[width = 20em]{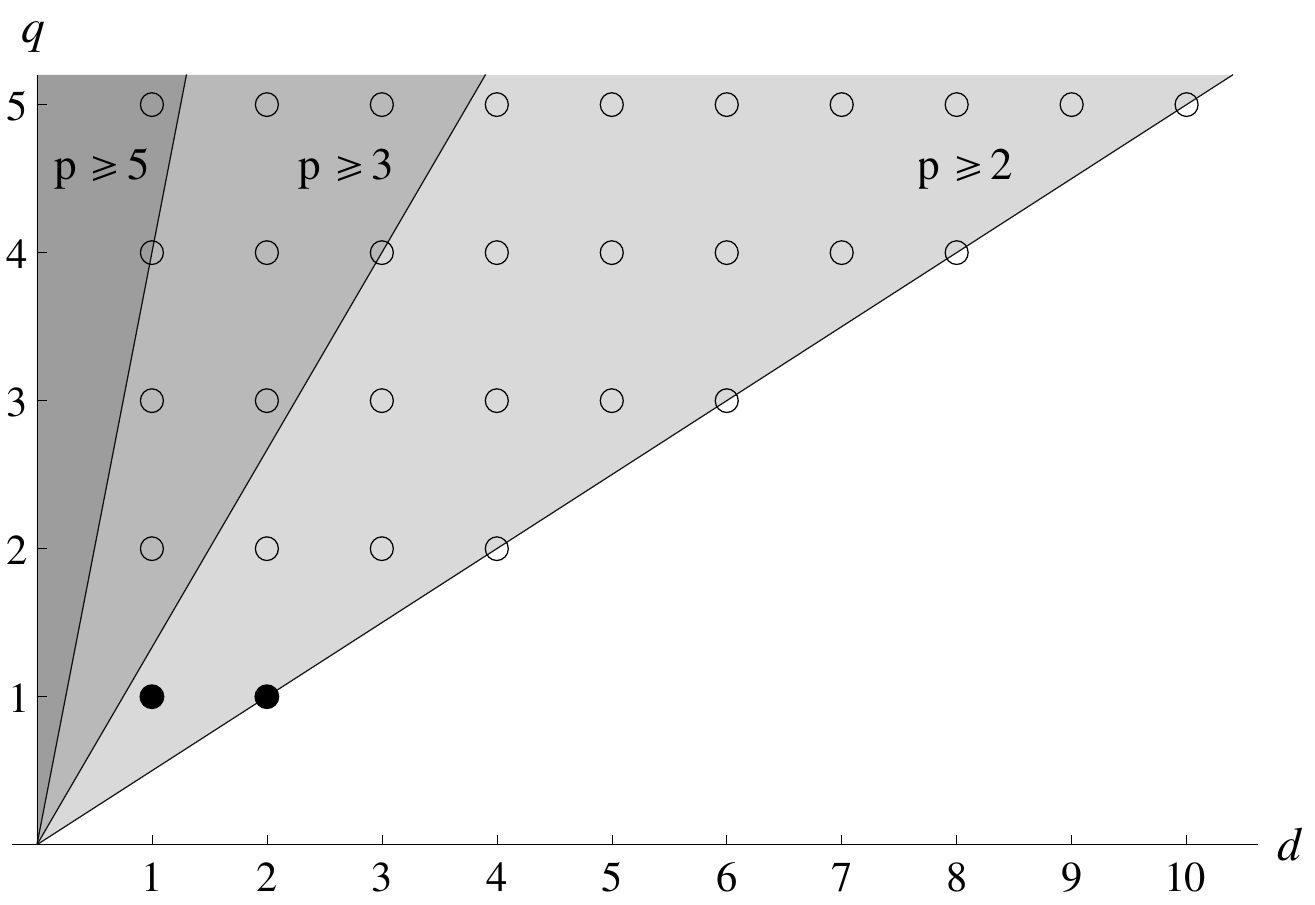} 
  \caption{Circles represent possible values for the
    degree $d$ and irregularity $q$ of an l.c.i.~and normal del Pezzo
    surface with positive irregularity $q > 0$.
    Solid dots represent actual values attained by
    the regular del Pezzo surfaces constructed in
    \S\ref{subsection-q-1-d-1} and \S\ref{subsection-q-1-d-2}.  
    Shaded regions demonstrate how the inequality \eqref{inequality}
    becomes more restrictive as the characteristic grows.}  
  \label{graph-figure}
\end{figure}

As our proof of \eqref{item-main-theorem-construction} is
constructive, 
it should be possible to obtain
concrete descriptions of the geometry in each example.
We do so
for the degree one surface $X_1$,
proving by explicit computation in Proposition
\ref{prop-explicit-description-of-reid} 
a detailed version of the following proposition.\\

\noindent \textbf{Proposition.}
{ \it 
There exists a regular form, $Z$, of a double plane in
$\P^3$ and a finite, inseparable morphism $f: Z \to X_1$ of degree $p
= 2$.  Moreover, if $\bar Z$ and $\bar X_1$ denote the geometric base
changes of $Z$ and $X_1$, respectively, then this
construction has the following properties:
\begin{enumerate}
\item The induced morphism $f^{\red}: \P^2 \cong \bar Z^{\red} \to \bar X_1$ is the
  normalization of $\bar X_1$.
\item The singular locus of $\bar X_1$ is a rational cuspidal curve
  $C$ of arithmetic genus one.
\item The inverse image of $C$ under $f^{\red}$ is a non-reduced double line in $\P^2$.
\end{enumerate}
}

\subsection{Motivation from the minimal model program}\label{subsection-minimal-model}
Among the varieties over function fields, Fano varieties such as del
Pezzo surfaces are
of particular interest, due to their
prominent role in the minimal model program.
In brief,
the goal of the program
is to understand the birational geometry of a variety $X$ by 
constructing a birational model
$\hat X$ whose canonical divisor $K_{\hat X}$ is a nef divisor;
one calls such a variety $\hat X$ a \emph{minimal model} of $X$.
If $\hat X$ is smooth, then the terminology is justified: 
$\hat X$ is minimal in the sense that any birational morphism
$\hat X \to X'$ to a smooth variety $X'$ is an
isomorphism (cf. \cite[Prop.~1.45]{deb}).  

If $X$ is not itself a minimal model, then there exist effective curves 
$C \subseteq X$ that pair negatively
with the canonical divisor, $C.K_X < 0$.  The strategy for
constructing $\hat X$ is to 
attempt to contract precisely 
these negative curves
 via birational morphisms $f:X \to Y$ and
then to partially 
resolve any serious singularities that were introduced.
However, the contraction morphism
associated to a certain negative curve
may not be birational, and
the contracted variety $Y$ may be of lower dimension, as is the case, say,
for ruled surfaces.
Since the curves 
contained in fibres of $f$
each pair negatively
with $K_X$,  the fibres of $f$ are therefore Fano schemes by
Kleiman's criterion for ampleness.
In other words, the contraction morphism $f:X \to Y$ realizes $X$ as a
Fano fibre-space.   

When $X$ is a smooth $3$-fold over an
algebraically closed field,
theorems of Mori \cite{mor} and
Koll\'ar 
\cite{kol0} guarantee
that any given extremal ray in the
cone of effective curves pairing negatively with $K_X$ can be
contracted by a morphism $f:X \to Y$ to a normal variety $Y$.
Furthermore, they classify these contraction morphisms:
either $f$ is birational, equal to the inverse
of the blowing-up of a  
point or a smooth curve in $Y$, or
$f:X \to Y$ is a Fano fibration over a smooth variety $Y$ of dimension
at most $2$.
If $Y$ is a point, then $X$ is itself a Fano $3$-fold, while
if $Y$ is a surface, then $X$ is a 
 conic bundle over $Y$.  

Our case of interest
is when $Y$ is
a curve, as then $f:X \to Y$ is a del Pezzo fibration.  
Since $X$ is smooth, the generic fibre of the fibration is a regular
del Pezzo surface over the function field of $Y$.
In characteristic $0$, regular del Pezzo surfaces are smooth,
and there are some results toward a birational
classification of these del Pezzo fibrations 
(cf. \cite{gri} for a recent survey).
In positive characteristic, however, 
 the generic del Pezzo surface is potentially non-smooth,
and  the situation is not so clear.
Indeed, Koll\'ar asks whether these regular del Pezzo surfaces can be
geometrically non-normal, 
or even geometrically non-reduced, but remarks that understanding
this phenomenon seems complicated, especially in characteristic
$2$ (cf. \cite[Rem.~1.2]{kol0}).

\subsection{Regular forms and the classification of del Pezzo surfaces}

\label{subsection-classification-over-k-bar}
We can also contextualize our results in terms of the classification of del
Pezzo surfaces over an algebraically closed field.  In particular, we
will see how our Main Theorem makes progress toward determining
which singular (possibly non-normal or non-reduced) del Pezzo
schemes over algebraically closed fields admit regular $k$-forms for
some subfield $k$.
\begin{definition}
  Let $K/k$ be an extension of fields.  Given a $K$-variety $\bar X$, 
  one says that a $k$-variety $X$ is a \emph{($k$-)form} of $\bar X$
  provided that there exists an isomorphism $\bar X \cong X \times_k K$.
\end{definition}
We recall the classification of del Pezzo surfaces $X$ over an
algebraically closed field.
When $X$ is normal, 
Hidaka and Watanabe \cite{hid-wat1} prove that 
either $X$ is a rational surface
with singularities at worst rational double points or $X$ is a cone over an
elliptic curve.   Not all of these surfaces admit regular forms, as
Hirokado \cite{hir1} and Schr\"oer 
\cite{sch2} show how the existence of a regular form
 puts restrictions on the possible singularities.

In the course of proving the classification result, Hidaka and
Watanabe \cite{hid-wat1} prove that all normal del Pezzo
surfaces over an algebraically closed field satisfy $H^1(X, \OO_X) = 0$.
Over the complex numbers, this cohomological vanishing $H^1(X,\OO_X) = 0$
can be viewed as a consequence of 
the Kodaira vanishing theorem for normal surfaces (cf. \cite{mum1}), since
$H^1(X,\OO_X)$ is Serre-dual to the group $H^1(X, \omega_X)$ and
$\omega_X$ is the inverse of an ample line bundle.

Reid \cite{rei1} classifies the non-normal del Pezzo surfaces.
He shows that such surfaces $X$ are formed from rational, normal varieties 
$X^\nu$ by collapsing a (possibly non-smooth) conic
to a rational curve $C$ that is either smooth or has wildly cuspidal
singularities (i.e. cuspidal singular points of order divisible by the
prime characteristic $p > 0$).  
We remark that for these surfaces, the irregularity is equal to the
arithmetic genus of the curve of singularities $C$, that is, $h^1(X,
\OO_X) = h^1(C, \OO_C)$.  In particular, when $C$ is smooth, $X$ is a
non-normal del Pezzo surface with $H^1(X,\OO_X) = 0$.  

When $C$ is wildly cuspidal, Reid shows that the
normalization $X^\nu$ is the cone over a rational curve of degree $d
\geq 1$
and the normalization morphism $\phi: X^\nu \to X$ is the contraction to
$C$ of the non-reduced double structure $D$ on a ruling $D^{\red}$
in $X^\nu$. Moreover,  the restriction of $\phi$ to the ruling
$D^{\red}$ gives a desingularization of $C$.
This construction requires the cusps of $C$ to be wild,
because
otherwise the resulting variety $X$ is not Gorenstein (cf. \cite[\S4.4]{rei1}).
Such examples $X$ are non-normal del Pezzo surfaces of
anti-canonical degree $K_X^2 = d$ and 
irregularity 
$h^1(X, \OO_X) = h^1(C, \OO_C) > 0$.  
 Reid constructs explicit surfaces $X$ where the curve $C$
has cusps of arbitrarily large order, showing that the irregularity of
a non-normal del Pezzo surface may
be arbitrarily large. 
Such surfaces are arguably the most pathological examples of del Pezzo
surfaces. 

In light of this classification, 
a scheme $\bar X$ admits a $k$-form that is a del Pezzo surface over
$k$ with irregularity $h^1(X,\OO_X) > 0$ only if $\bar X$ 
is a non-normal del Pezzo surface or $\bar X$ is a non-reduced
del Pezzo scheme.  Main Theorem~\eqref{item-main-theorem-construction}
asserts that regular 
forms can exist in either case, and Main
Theorem~\eqref{item-main-theorem-bound} provides a
numerical inequality that, in particular, rules out a large class of 
non-normal del Pezzo surfaces that could potentially admit regular
forms.

\subsection{A prior example}
Acknowledging
Reid's non-normal classification, Koll\'ar
remarks in \cite[Rem.~5.7.1]{kol1}
on the possibility that regular del Pezzo surfaces could 
have positive irregularity.
He
ultimately leaves the issue 
unresolved, although his question is repeated later by 
Schr\"oer in \cite{sch1}.  There Schr\"oer constructs an
interesting example of a normal del Pezzo
surface $Y$ in characteristic $2$ that
is a local complete intersection (l.c.i.) and regular away from one
singular point $y_\infty$, and has irregularity $h^1(Y, \OO_{Y}) = 1$. 
This variety $Y$ is a form
of the example of Reid whose normalization morphism
is described as 
the collapse of a non-reduced double line in $\P^2$
to a reduced cuspidal curve $C$ with arithmetic genus $1$.
Schr\"oer's 
method of construction is to begin with any imperfect field $k$ of
characteristic $2$ along with
a non-normal $k$-form $X$ of the variety constructed by Reid. 
Schr\"oer then studies actions of 
the infinitesimal group scheme $\alpha_2$ on $X$. He uses one such
action to twist the field of definition, thus obtaining the twisted
form $Y$ which he proves to be l.c.i.~and normal. 
Schr\"oer shows moreover that no
$\alpha_2$-twisting of the variety $X$  
can remove the singularity at $y_\infty$,
and hence his surface $Y$ is an optimal one obtainable by
this method.

\subsection{A brief outline}

The numerical bound in Main Theorem~\eqref{item-main-theorem-bound}
is obtained in \S \ref{numerical-bounds-section} by studying 
the inseparable degree $p$ covers
associated to Frobenius-killed classes in the first cohomology group
of pluri-canonical line bundles on $X$.
Such covers were
studied by Ekedahl in \cite{eke1} and shown to have peculiar
properties, which we interpret to deduce the inequality 
\eqref{inequality}.
The notion of algebraic foliation on a regular (possibly non-smooth)
variety is developed in \S
\ref{general-construction-section}, where we
extend results of Ekedahl \cite{eke2} from the
smooth case.
The surfaces $X_1$ and $X_2$ mentioned in Main
Theorem~\eqref{item-main-theorem-construction} 
are exhibited as quotients by explicit algebraic foliations on a
regular 
form of 
a non-reduced double plane in projective $3$-space
in \S\ref{geometric-construction-section}. 
We conclude in \S \ref{local-chart-section} with a detailed
study of the example $X_1$, a regular and
geometrically integral del Pezzo surface with $h^1(X_1,
\OO_{X_1}) = 1$.

\vspace{1em}
\noindent \textbf{Acknowledgements.}
I thank my thesis advisor Johan de Jong for introducing me to this
topic, and for his charitable guidance and
unceasing optimism that led me through the discovery of the contained
results.  I also would like to thank J\'anos Koll\'ar and Burt Totaro
for offering helpful comments on an earlier draft.
\\


\begin{center}
{\sc Notation}\\
\end{center}

\vspace{0.25em}
\begin{itemize}
\renewcommand{\labelitemi}{$\cdot$}
\item All fields are assumed to be of characteristic $p \geq 2$.
\item A \emph{variety} {over a field $k$} is a finite-type,
  integral $k$-scheme.  
\item  $k(X)$ denotes the function field of a $k$-variety $X$.
\item $k_X := H^0(X,\OO_X)$ denotes the field of global functions of a
  proper $k$-variety $X$.
\item $K_X$ denotes the canonical divisor associated to the dualizing
  sheaf $\omega_{X}$ of a Gorenstein variety $X$.
\item $d = K_X^2$ denotes the anti-canonical degree of a del
  Pezzo surface $X$,  computed as the self-intersection number over
  the field $k_X = H^0(X, \OO_X)$. 
\item $h^i(X, \sF) := \dim_{k_X} H^i(X,\sF)$ denotes the
  dimension over the field $k_X = H^0(X,\OO_X)$ of the $i$th
  cohomology group of a sheaf $\sF$ on a proper variety $X$.
\item $q = h^1(X,\OO_X)$ denotes the irregularity of a proper surface
  $X$.
\item $\chi(\sF) := \sum_i (-1)^i h^i(X, \sF)$ denotes the Euler
  characteristic of the coherent sheaf $\sF$ on a proper variety $X$.
\item $\Frobenius_X:X \to X$ denotes the absolute Frobenius morphism of
  a scheme $X$.  
\item $\Frobenius_{X/S}$ denotes the Frobenius morphism relative
 to a morphism of schemes $X \to S$.
\item $\Omega_{Z/S}$ denotes the sheaf of relative K\"ahler
  differentials of an $S$-scheme $Z$.
\item $T_{Z/S} := \scr{H}om(\Omega_{Z/S}, \OO_Z)$ denotes the relative
  tangent bundle of an $S$-scheme  $Z$.
\end{itemize}

\section{Numerical bounds on del Pezzo surfaces with
  irregularity}\label{numerical-bounds-section}
\setcounter{equation}{0}

The goal of this section is to find a restriction on the possible
integer pairs $(d, q)$ that exist as the degree $d = K_X^2$ and 
irregularity $q = h^1(X, \OO_X)$ of a normal, l.c.i.~del Pezzo 
surface $X$ over a field $k$, under the assumption that $q \neq 0$.
Our method is to study the torsors, for certain non-reduced group
schemes $\alpha_{\sL}$,
associated to Frobenius-killed classes in the first cohomology group
of pluri-canonical line bundles $\sL := \omega_X^{\otimes m}$ on $X$.
Originally studied by Ekedahl in
\cite{eke1, eke2}, 
the existence of such torsors are often used as a technique to
work around the failing of Kodaira vanishing in characteristic $p > 0$.

\subsection{$\alpha_{\sL}$-torsors}\label{alpha-L-torsors-subsection}
\setcounter{equation}{0}

We briefly summarize here the basic properties of
$\alpha_{\sL}$-torsors, but we refer the reader to
\cite{eke1} or \cite[\S II.6.1]{kol2} for more detailed accounts.

Let $\sL$ be a line bundle on a variety $X$ over a field $k$ of
characteristic $p$ such that $H^1(X, \sL) \neq 0$.  We note that if
$\sL$ is the inverse of an ample line bundle, then this would be an
example of the Kodaira non-vanishing phenomenon.
Assume as well that pulling-back by
the absolute Frobenius morphism $\Frobenius_X: X \to X $ does not yield
an injective homomorphism from
$H^1(X, \sL)$,
that is,
there exists a nonzero class
$\bar \xi \in H^1(X, \sL)$ for which 
$$\Frobenius_X^\ast(\bar \xi)
= 0 \in H^1(X,\sL^{\otimes p}).$$
 
The Frobenius pull-back defines a surjective homomorphism of group schemes
over $X$ from $\sL$ to $\sL^{\otimes p}$.
Let $\alpha_{\sL}$ be the group scheme defined as the kernel
of this homomorphism, which by definition sits in the short exact sequence
of group schemes,
\begin{equation}\label{short-exact-seq}
  0 \to \alpha_{\sL} \to \sL \stackrel {\Frobenius_X^\ast} \to
  \sL^{\otimes p} \to 0.
\end{equation}
Locally the group scheme $\alpha_{\sL}$ is isomorphic to the
constant non-reduced group scheme $\alpha_p$, whose fibre over $X$
is the kernel of the $p$th power endomorphism of the additive
 group $\G_a$.

The long exact sequence in cohomology associated to
\eqref{short-exact-seq} shows that the class
$\bar \xi$ comes from a nonzero class $\xi \in H^1(X,
\alpha_{\sL})$ that is determined up to an element of the
cokernel of $\Frobenius^\ast_X:H^0(X, \sL) \to H^0(X,\sL^{\otimes p})$.
Via \u Cech 
cohomology, one sees that $\xi$
gives rise to a nontrivial $\alpha_{\sL}$-torsor $f: Z \to X$.
The morphism $f:Z \to X$ is  purely inseparable of degree
$p$ because $\alpha_{\sL}$ is a non-reduced finite group scheme of
degree $p$ over $X$.

To describe this $\alpha_{\sL}$-torsor more
explicitly, notice that a Frobenius-killed class $\bar \xi \in
H^1(X,\sL)$ corresponds to a non-split extension 
of vector bundles,
$$0 \to \OO_X \stackrel i \to \sE \stackrel \pi \to \sL\inv \to 0,$$
for which there is some splitting
$\sigma: \sL^{\otimes -p} \to \Frobenius_X^\ast \sE$ 
of the morphism $F_X^\ast \pi$. 
We note that the choice of splitting is determined up to
an element of $H^0(X, \sL^{\otimes p})$.
The  affine algebra $f_\ast \OO_Z$ is the quotient of
the symmetric algebra $\Sym^\ast( \sE)$ by the ideal generated by $1 -
i(1)$ as well as the image of $\sigma$ in $\Frobenius_X^\ast \sE \subseteq
\Sym^p(\sE) \subseteq \Sym^\ast(\sE)$.  Two splittings yield isomorphic
quotients precisely when they differ by an element in the
image of $\Frobenius_X^\ast:H^0(X,\sL) \to H^0(X,\sL^{\otimes p})$.  Thus we see
that this explicit construction is also determined by the data of some
class $\xi \in H^1(X, \alpha_{\sL})$ lifting $\bar \xi \in H^1(X, \sL)$.

\begin{proposition}[Ekedahl]\label{euler-characteristic-of-alpha-L-torsor-prop}
  If $X$ is a normal, projective, Gorenstein (resp. l.c.i.) variety and
  $f:Z \to X$ a 
  nontrivial 
  $\alpha_{\sL}$-torsor for some line bundle $\sL$, then $Z$ is a
  projective, Gorenstein (resp. l.c.i.) variety satisfying:
\begin{enumerate}
  \item $\omega_Z \cong f^\ast(\omega_X \otimes
  \sL^{\otimes p -1})$, 
  \item $\chi(f_\ast\OO_Z) =  \sum_{i=0}^{p-1}\chi(\sL^{\otimes -
    i})$. \label{item-euler-char}
\end{enumerate}
\end{proposition}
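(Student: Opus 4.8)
The plan is to read everything off the explicit presentation of the torsor recalled just above the statement. Setting $i(1)=1$ in the presentation $f_\ast\OO_Z=\Sym^\ast(\sE)/(1-i(1),\,\mathrm{im}\,\sigma)$ collapses the extension $0\to\OO_X\to\sE\to\sL\inv\to 0$ onto the image of a single section $t$ of $\sL\inv$, and the relation coming from $\sigma$ becomes, Zariski-locally on $X$, an equation $t^p=a$. Thus $f_\ast\OO_Z$ carries an exhaustive filtration by the powers of $t$ whose associated graded is $\bigoplus_{i=0}^{p-1}\sL^{\otimes -i}$; in particular $f$ is finite and flat of degree $p$, and locally it presents $Z$ as the hypersurface $\{t^p-a=0\}$ inside the total space $W:=\Spec_X\Sym^\ast(\sL\inv)$ of $\sL$, which is smooth of relative dimension one over $X$. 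First I would dispatch the qualitative assertions from this local model: finite morphisms are projective and projective morphisms compose, so $Z$ is projective over $k$; since $f$ is purely inseparable it is a universal homeomorphism, whence $Z$ is irreducible; and since $Z$ is a relative hypersurface in the smooth $X$-scheme $W$, the morphism $Z\to X$ is l.c.i.\ (indeed relatively Gorenstein), so composing with $X\to\Spec k$ shows $Z$ is l.c.i.\ (resp.\ Gorenstein) once $X$ is. Nontriviality of the torsor forces $a$ not to be a $p$-th power in $k(X)$, so the generic fibre is a field and $Z$ is generically reduced; being l.c.i.\ it is $S_2$, hence reduced, hence integral, i.e.\ a variety.

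The Euler-characteristic identity is then immediate. Because $f$ is finite, $\chi(f_\ast\OO_Z)$ may be computed on $X$, and since $\chi$ depends only on the class in the Grothendieck group it is insensitive to the filtration above, giving $\chi(f_\ast\OO_Z)=\sum_{i=0}^{p-1}\chi(\sL^{\otimes -i})$.

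For the canonical-bundle formula I would invoke Grothendieck duality for the finite flat morphism $f$, which gives $\omega_Z\cong f^!\omega_X\cong\mathcal{H}om_{\OO_X}(f_\ast\OO_Z,\omega_X)$ as $\OO_Z$-modules, so that by the projection formula the relative dualizing sheaf is $\omega_{Z/X}\cong\mathcal{H}om_{\OO_X}(f_\ast\OO_Z,\OO_X)$. The task is to identify this rank-one $\OO_Z$-module together with its twist, and this is where I expect the main difficulty to lie: in characteristic $p$ the usual adjunction generator $1/g'$ for $g=t^p-a$ is useless, since $g'=p\,t^{p-1}=0$, so the inseparability must be treated directly. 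Tracking the $\OO_Z$-action on the dual basis $\{\psi_i\}$ of $\{t^i\}$ shows $t\cdot\psi_i=\psi_{i-1}$ for $i\ge 1$, so $\mathcal{H}om_{\OO_X}(f_\ast\OO_Z,\OO_X)$ is generated over $\OO_Z$ by the top functional $\psi_{p-1}$, which transforms as a section of $\sL^{\otimes p-1}$; hence $\omega_{Z/X}\cong f^\ast\sL^{\otimes p-1}$ and $\omega_Z\cong f^\ast(\omega_X\otimes\sL^{\otimes p-1})$. Equivalently, one may compute in $W$ by adjunction, $\omega_{Z/X}\cong(\omega_{W/X}\otimes\OO_W(Z))|_Z$ with $\omega_{W/X}\cong\pi^\ast\sL\inv$; the only delicate point there is that the ideal sheaf of $Z$ is generated by local functions transforming as $\sL^{\otimes -p}$, so that $\OO_W(Z)\cong\pi^\ast\sL^{\otimes p}$ (its inverse, not $\sL^{\otimes -p}$), again yielding $\omega_{Z/X}\cong f^\ast\sL^{\otimes p-1}$. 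Reconciling these two twist bookkeepings, and in particular getting the sign of the exponent right across the inseparable ``wrap-around'' of the relation $t^p=a$, is the step that will require the most care.
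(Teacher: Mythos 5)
Most of your proposal is sound, and it is in fact more self-contained than the paper's proof: the paper simply cites Ekedahl \cite[\S1]{eke1} and Koll\'ar \cite[Prop.~II.6.1.7]{kol2} for the integrality of $Z$ and for the formula $\omega_Z \cong f^\ast(\omega_X\otimes\sL^{\otimes p-1})$, proves item (2) by the same filtration of $f_\ast\OO_Z$ (by images of $\Sym^i(\sE)$, with graded pieces $\sL^{\otimes -i}$) that you use, and handles the l.c.i.\ case exactly as you do, by exhibiting $Z$ as a Cartier divisor in an affine bundle over $X$. Your duality computation of $\omega_{Z/X}\cong\mathcal{H}om_{\OO_X}(f_\ast\OO_Z,\OO_X)\cong f^\ast\sL^{\otimes p-1}$ also comes out right: the transition matrices between the local bases $\{1,t,\dots,t^{p-1}\}$ are upper triangular for the filtration, so the top dual functional $\psi_{p-1}$ is a local generator whose transitions differ from those of $f^\ast\sL^{\otimes p-1}$ by units of $\OO_Z$, and the adjunction computation in $W$ confirms the exponent.

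There is, however, one genuine gap, and it sits exactly at the one point where the normality hypothesis must be used --- a hypothesis your argument never invokes. You assert that ``nontriviality of the torsor forces $a$ not to be a $p$-th power in $k(X)$.'' This is not a formal consequence of nontriviality, and it is false without normality: take $X = \Spec k[u,v]/(v^2-u^3)$ with $p=2$ and $\sL = \OO_X$; the rational function $b = v/u$ is not regular but $a := b^2 = u$ is, and $t^2 = u$ defines a nontrivial $\alpha_{\OO_X}$-torsor (any regular section $c$ would satisfy $(c-b)^2=0$ in $k(X)$, hence $c = b$, which is not regular) whose total space is generically non-reduced. The correct argument is this: if $a$ were a $p$-th power in $k(X)$, the torsor would have a rational section; on each affine chart the corresponding local solution $b_i$ of $t_i^p = a_i$ satisfies the monic equation $T^p - a_i = 0$ with $a_i \in \OO_X(U_i)$, so $b_i$ is integral over $\OO_X(U_i)$, and normality (each $\OO_X(U_i)$ integrally closed in $k(X)$) forces $b_i \in \OO_X(U_i)$; these regular local solutions glue to a global section of $f$, and a torsor with a section is trivial, contradicting nontriviality. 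Inserting this argument closes the gap; without it you have ``proved'' a statement that fails as soon as the word \emph{normal} is dropped, which is precisely the phenomenon (non-normal del Pezzo surfaces with non-reduced covers) that this paper is organized around.
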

\begin{proof}
  Showing that $Z$ is integral with $\omega_Z \cong f^\ast(\omega_X
  \otimes \sL^{\otimes p-1})$ when $X$ is normal can be found
  in \cite[\S 1]{eke1} or \cite[Prop. II.6.1.7]{kol2}. 
  From the explicit description of $Z$ given above, we obtain
  a filtration of $f_\ast \OO_Z$, given by the images of $\Sym^i(\sE)$,
  whose successive quotients are isomorphic to $\sL^{\otimes - i}$, for $0
  \leq i < p$ 
  (cf. \cite[Prop. 1.7]{eke1});
  this immediately yields the Euler
  characteristic formula in \eqref{item-euler-char}.
  Finally, if $X$ is l.c.i., then $Z$ is too as
  it embeds in the affine bundle 
  $\mathbf{Spec}~ \Sym^\ast(\sE)/(1 - i(1))$  over $X$ as the Cartier
  divisor defined locally by $\sigma(s)$, where $s$ is a local generator
  of $\sL^{\otimes -p}.$
\end{proof}
  
  We intend to use Proposition
\ref{euler-characteristic-of-alpha-L-torsor-prop}~\eqref{item-euler-char}
to relate the Euler characteristic of the structure sheaf of a normal,
l.c.i.~del Pezzo surface 
$X$ to that of a nontrivial $\alpha_{\sL}$-torsor $f:Z \to X$.  
Yet, if the fields $k_Z := H^0(Z, \OO_Z)$ and $k_X := H^0(X,\OO_X)$
do not coincide, then the Euler characteristics $\chi(\OO_Z)$ and
$\chi(f_\ast \OO_Z)$ differ by a factor of $[k_Z:k_X]$:
$$\chi(f_\ast \OO_Z) = [k_Z:k_X]\cdot \chi(\OO_Z).$$
The following easy lemma controls 
this factor, showing it is either $1$ or $p$.

\begin{lemma}\label{global-sections-of-Z-lemma}
  If $f:Z \to X$ is a finite dominant morphism of degree $d$
  from a proper
  variety $Z$ to a normal, proper variety $X$ over $k$,
  then $k_Z := H^0(Z, \OO_Z)$ is a field extension of $k_X:=
  H^0(X,\OO_X)$ whose degree divides $d$, that is,
  $$[k_Z:k_X] ~|~ d.$$
\end{lemma}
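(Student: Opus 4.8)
The plan is to convert the desired divisibility $[k_Z:k_X]\mid d$ into a statement about linear disjointness of the two fields of constants, and then to extract that disjointness from the normality of $X$. First I would record the basic shape of the objects. Since $X$ and $Z$ are proper and integral over $k$, each of $k_X=H^0(X,\OO_X)$ and $k_Z=H^0(Z,\OO_Z)$ is a finite-dimensional $k$-algebra that is a domain (it injects into the relevant function field), hence a finite field extension of $k$. The homomorphism $\OO_X\to f_\ast\OO_Z$ induces a nonzero ring map $k_X\to k_Z$ between fields, so it is injective and realizes $k_Z/k_X$ as a finite field extension. Because $f$ is a finite dominant morphism of integral schemes its degree is $d=[k(Z):k(X)]$, so the entire problem is to prove $[k_Z:k_X]\mid[k(Z):k(X)]$.

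Next I would bring in normality. Any $\beta\in k(X)$ algebraic over $k_X$ is algebraic, hence integral, over $k$; as $k\subseteq\OO_{X,x}$ and each $\OO_{X,x}$ is integrally closed in $k(X)$, such a $\beta$ lies in every $\OO_{X,x}$, so $\beta\in\bigcap_x\OO_{X,x}=H^0(X,\OO_X)=k_X$. Thus $k_X$ is algebraically closed in $k(X)$; in particular, viewing everything inside $k(Z)$ via $f$, we get $k_Z\cap k(X)=k_X$. Now form the compositum $M:=k_Z\cdot k(X)\subseteq k(Z)$. The tower $k(X)\subseteq M\subseteq k(Z)$ and multiplicativity of degrees give $[M:k(X)]\mid d$, so it suffices to prove $[M:k(X)]=[k_Z:k_X]$, i.e.\ that $k_Z$ and $k(X)$ are linearly disjoint over $k_X$, equivalently that the finite $k(X)$-algebra $k_Z\otimes_{k_X}k(X)$ is a field rather than a nontrivial local Artinian ring.

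The hard part is precisely this reducedness/disjointness claim, and it is where the imperfection of the fields enters. Since $k_X$ is even separably closed in $k(X)$, the extension $k(X)/k_X$ is primary, so $k_Z\otimes_{k_X}k(X)$ is local Artinian with residue field $M$; the separable part of $k_Z/k_X$ contributes nothing to its length, and the whole issue is inseparable. For a single purely inseparable generator $\theta$ of $k_Z/k_X$ with $\theta^{p^e}=a\in k_X$, disjointness is equivalent to $a\notin k(X)^{p}$, and this I can force from normality: if $a=c^{p}$ with $c\in k(X)$, then $c=a^{1/p}$ is algebraic over $k_X$, hence $c\in k_X$ and $a\in k_X^{p}$, contradicting $[k_X(\theta):k_X]=p^{e}$. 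The step I expect to be the main obstacle is to control several purely inseparable generators at once — that is, to show that no $p$-power relation among elements of $k_X$ which fails over $k_X$ can become valid over the imperfect overfield $k(X)$, so that the $p$-independence of a chosen set of constants is preserved upon passing from $k_X$ to $k(X)$. Establishing this preservation (rather than the clean one-generator case) is the essential content, and identifying the exact hypotheses under which it holds is where the real care is required.
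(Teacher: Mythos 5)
Your plan is the same as the paper's own proof: use normality to show that $k_X$ is algebraically closed in $k(X)$, hence $k_Z\cap k(X)=k_X$; upgrade this to linear disjointness of $k_Z$ and $k(X)$ over $k_X$; and conclude from the tower $k(X)\subseteq k_Z\cdot k(X)\subseteq k(Z)$. The difference is one of rigor. The paper makes the upgrade in a single word (``Because $X$ is normal and $f$ is finite, $k_Z\cap k(X)=k_X$. Therefore, $k_Z\otimes_{k_X}k(X)$ is a subfield of $k(Z)$\dots''), whereas you correctly observe that trivial intersection does not imply linear disjointness, you settle the separable part of $k_Z/k_X$ (via primariness of $k(X)/k_X$) and the case of a single purely inseparable generator, and you isolate the remaining point --- that a $p$-independent subset of $k_X$ stays $p$-independent in $k(X)$ --- as the step you cannot justify.

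You were right to stop there: that step is a genuine gap, and it cannot be closed, because the lemma as stated is false. Take $k_X:=\F_p(u,v)$, $K:=k_X(x)[y]/(y^p-u-vx^p)$, and let $X$ be the normalization of $\P^1_{k_X}$ in $K$; this is a normal proper $k_X$-variety with $k(X)=K$ and $H^0(X,\OO_X)=k_X$. (Embed $K\subseteq k'(x)$, where $k':=\F_p(u^{1/p},v^{1/p})$, via $y\mapsto u^{1/p}+v^{1/p}x$; a constant $\beta\in K\setminus k_X$ would lie in $k'$, so $\beta^p\in k_X$ and $[k_X(\beta):k_X]=p$, forcing $K=k_X(\beta)(x)$ by a degree count; but then $y\in k_X(\beta)[x]$, and comparing coefficients in $y^p=u+vx^p$ puts $u^{1/p},v^{1/p}$ in $k_X(\beta)$, a degree contradiction.) Now $L:=K(u^{1/p},v^{1/p})=k'(x)$ satisfies $[L:K]=[L:k_X(x)]/[K:k_X(x)]=p^2/p=p$, and the normalization $Z$ of $X$ in $L$ --- equivalently, of $\P^1_{k_X}$ in $k'(x)$ --- is $\P^1_{k'}$, so $k_Z=k'$. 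Thus $f:Z\to X$ is a finite dominant morphism of degree $d=p$ from a proper variety to a normal proper variety, yet $[k_Z:k_X]=p^2$, which does not divide $p$. The failure is exactly the phenomenon you flagged: $\{u,v\}$ is $p$-independent in $k_X$ but $p$-dependent in $K$, since $u=y^p-vx^p\in K^p(v)$. So neither your argument nor the paper's can be completed as written; the paper's ``therefore'' is precisely the unjustified step. What the shared approach does prove is the lemma under the extra hypothesis that $k(X)$ is separable over $k_X$ (e.g.\ $X$ geometrically reduced as a $k_X$-variety): then algebraic closedness makes $k(X)/k_X$ a regular extension, which is linearly disjoint from every algebraic extension of $k_X$, and the divisibility follows. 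That hypothesis is not harmless here, since Lemma~\ref{global-sections-of-Z-lemma} is invoked in the proof of Theorem~\ref{numerical-formula-theorem} for del Pezzo surfaces that may be geometrically non-reduced.
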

\begin{proof}
  There are field extensions $k_X \subseteq k(X) \subseteq k(Z)$ and
  $k_Z \subseteq k(Z)$.
  Because $X$ is normal and $f$ is finite, $k_Z \cap k(X) = k_X$.
  Therefore, $k_Z \otimes_{k_X} k(X)$ is a subfield of $k(Z)$, of degree
  $[k_Z: k_X]$ over $k(X)$, and hence $[k_Z: k_X]$ 
  divides $[k(Z): k(X)] = d$.
\end{proof}

\subsection{Normal del Pezzo surfaces of local complete intersection}
\setcounter{equation}{0}

Let $X$ be a normal, l.c.i.~del Pezzo surface over a field $k$ such that
for some integer $n$ the cohomology group $H^1(X, nK_X) \neq 0$
(e.g. $X$ is a regular del Pezzo surface with irregularity and $n = 1$).
We will see that the construction of the previous subsection can be used
to create a degree $p$ inseparable 
morphism $f: Z \to X$ whose existence puts restrictions on the possible
pairs of integers $(d,q)$ that arise as the degree $d$ and
irregularity $q$ of such $X$.
 The normalcy condition is used to ensure the integrality of
$Z$, and the l.c.i.~condition guarantees that we may use the following
version of the Riemann-Roch theorem:

\begin{theorem}[Riemann-Roch]\label{riemann-roch-for-l.c.i.-surfaces-theorem}
  If $D$ be a Cartier divisor on a $2$-dimensional variety $X$ of
  local complete intersection, then
  $$ \chi(\OO_X(D)) = \chi(\OO_X) + \frac{1}{2} D.(D-K_X).$$
\end{theorem}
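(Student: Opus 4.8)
The plan is to avoid the classical Hirzebruch--Riemann--Roch theorem, which would require a tangent bundle and a Todd class unavailable on the possibly singular, non-smooth $X$, and instead to combine the purely numerical theory of intersection numbers with Grothendieck--Serre duality. Throughout I take $X$ to be projective, as the statement implicitly requires in order that the Euler characteristics and the intersection pairing be defined. The first step is to record the shape of $\chi(\OO_X(D))$ as a function of the Cartier divisor $D$. By Snapper's theorem (asymptotic Riemann--Roch), which is valid on any projective scheme with no smoothness hypothesis, the assignment $D \mapsto \chi(\OO_X(D))$ is a numerical polynomial of degree at most $\dim X = 2$, and the associated symmetric bilinear form is the intersection pairing:
\[
\chi(\OO_X(D_1 + D_2)) - \chi(\OO_X(D_1)) - \chi(\OO_X(D_2)) + \chi(\OO_X) = (D_1 . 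D_2).
\]
Setting $\lambda(D) := \chi(\OO_X(D)) - \chi(\OO_X) - \tfrac12 (D.D)$, this identity together with $(D_1 + D_2)^2 - D_1^2 - D_2^2 = 2(D_1.D_2)$ shows that $\lambda(D_1+D_2) - \lambda(D_1) - \lambda(D_2) = 0$, so $\lambda$ is a group homomorphism from Cartier divisor classes to $\Z$. I thus obtain $\chi(\OO_X(D)) = \chi(\OO_X) + \tfrac12 (D.D) + \lambda(D)$ with the quadratic part already in the desired form; everything now rests on identifying the linear term $\lambda$.

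The second step uses the l.c.i.\ hypothesis, which is precisely what makes $X$ Gorenstein and hence endows it with a dualizing sheaf $\omega_X = \OO_X(K_X)$ that is a \emph{line bundle}. Grothendieck--Serre duality for a projective Cohen--Macaulay scheme with invertible dualizing sheaf gives isomorphisms $H^i(X, \OO_X(D)) \cong H^{2-i}(X, \OO_X(K_X - D))^\vee$, and taking alternating sums (with $(-1)^{\dim X} = 1$) yields the symmetry
\[
\chi(\OO_X(D)) = \chi(\OO_X(K_X - D)).
\]
Substituting the formula from the first step into both sides, cancelling the common $\chi(\OO_X) + \tfrac12 (D.D)$ terms, and using additivity of $\lambda$, I arrive at $2\lambda(D) = \tfrac12 (K_X.K_X) - (K_X.D) + \lambda(K_X)$. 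Evaluating at $D = 0$, where $\lambda(0) = 0$, forces $\lambda(K_X) = -\tfrac12 (K_X.K_X)$; feeding this back gives $\lambda(D) = -\tfrac12 (K_X.D)$. Combining with the first step produces $\chi(\OO_X(D)) = \chi(\OO_X) + \tfrac12(D.D) - \tfrac12 (K_X.D) = \chi(\OO_X) + \tfrac12 D.(D - K_X)$, as claimed.

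I do not expect a genuine obstacle here: both ingredients hold verbatim for singular projective schemes, and the computation is routine. The only point requiring care is conceptual rather than computational---one must resist invoking the smooth Riemann--Roch theorem and instead note that (i) Snapper's numerical theory is characteristic-free and needs no regularity, and (ii) the l.c.i.\ assumption enters in exactly one place, guaranteeing that $\omega_X$ is invertible so that duality delivers the clean symmetry $\chi(\OO_X(D)) = \chi(\OO_X(K_X - D))$. Were $X$ merely Cohen--Macaulay or normal, $\omega_X$ might fail to be a line bundle and the linear term could not be pinned down in this way, which is why the theorem is stated for surfaces of local complete intersection.
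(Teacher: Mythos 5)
Your proof is correct, but it follows a genuinely different route from the paper. The paper invokes the Grothendieck--Riemann--Roch theorem for l.c.i.\ varieties (Fulton, Cor.~18.3.1(b)): it expands $\chi(\sL) = \int_X \ch(\sL) \frown (\td(T_{vir}) \frown [X])$ using the virtual tangent bundle, eliminates the constant term by evaluating at $\sL = \OO_X$, and identifies $c_1(T_{vir}) = -K_X$ via the adjunction formula for local complete intersections. You instead combine Snapper's numerical theory (the second difference of $D \mapsto \chi(\OO_X(D))$ is the intersection pairing, valid on any proper scheme) with the Serre-duality symmetry $\chi(\OO_X(D)) = \chi(\OO_X(K_X - D))$, pinning down the linear term of the Euler-characteristic polynomial by pure algebra. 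Your route buys two things: it avoids the machinery of Chow groups and virtual tangent bundles on singular varieties, and it actually proves the theorem under the weaker hypothesis that $X$ is Gorenstein --- as you yourself observe, the l.c.i.\ assumption enters only through the invertibility of $\omega_X$, which is exactly the Gorenstein condition. (This mild generalization is harmless here, since the paper applies the result only to Gorenstein del Pezzo surfaces, which are l.c.i.\ by hypothesis in the relevant theorems.) What the paper's approach buys is brevity given the citation, and a structural identification of all the coefficients at once rather than a term-by-term bootstrap. One cosmetic imprecision in your write-up: $\lambda(D) = \chi(\OO_X(D)) - \chi(\OO_X) - \tfrac12 (D.D)$ a priori takes values in $\tfrac12\Z$ rather than $\Z$, since $(D.D)$ need not be even before Riemann--Roch is established; the additivity argument is unaffected, as it works verbatim with values in $\Q$.
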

\begin{proof}
  The Grothendieck-Riemann-Roch theorem 
  asserts for any line bundle $\sL$ on $X$, 
  \begin{equation}\label{eqn-riemann-roch}
    \chi(\sL) = \int_X \ch(\sL) \frown (\td(T_{vir}) \frown [X]),
  \end{equation}
  where $T_{vir}$ is the virtual tangent bundle of $X$ 
  (cf.~\cite[Cor. 18.3.1(b)]{ful1}).
  The Todd class is given by $\td(T_{vir}) = 1 +
  \frac{1}{2}c_1(T_{vir}) + \frac{1}{12}(c_1(T_{vir})^2 +
  c_2(T_{vir}))$, and the Chern 
  character by $\ch(\sL) = 1 + c_1(\sL) + \frac{1}{2}c_1(\sL)^2$. 
  Taking $\sL := \OO_X$, we see that $\chi(\OO_X) =
  \frac{1}{12}\int_X (c_1(T_{vir})^2 + c_2(T_{vir})) \frown [X]$.  Substituting
  these expressions into \eqref{eqn-riemann-roch} for
  $\sL := \OO_X(D)$ results in the formula
  $$ \chi(\OO_X(D)) = \chi(\OO_X) + \frac{1}{2} \int_X  D.(D +
  c_1(T_{vir})).$$
  We finish by noting that $c_1(T_{vir}) = -K_X$, due to the adjunction
  formula for local complete intersections.
\end{proof}

The main result of this section is the following:

\begin{theorem}\label{numerical-formula-theorem}
  Let $X$ be a normal, l.c.i.~del Pezzo surface with
  irregularity $q_X = h^1(X, \OO_X)$.
  \begin{enumerate}
  \item \label{item-existence-of-m}
    If $q_X > 0$ then there exists a positive integer $m$ such that
  the line bundle $\sL := \omega_X^{\otimes m}$ has the following
  property:\\ 
  
  \noindent $(\ast)$ the absolute Frobenius pullback
  $\Frobenius_X^\ast: H^1(X,\sL) \to H^1(X,\sL^{\otimes p})$ 
  has a nontrivial kernel.\\

\item \label{item-existence-of-torsor}
  If $\sL$ is a line bundle that satisfies $(\ast)$ and is 
    numerically equivalent to $\omega_X^{\otimes m}$ for some integer
    $m$,
    then there exists a nontrivial $\alpha_{\sL}$-torsor $Z$
  that is an l.c.i.~del Pezzo surface of anti-canonical degree
  \begin{equation}\label{eqn-degree-of-Z}
    K_Z^2 = p^{1-e}(1 + m(p-1))^2K_X^2.
  \end{equation}
  The field  
  $k_Z := H^0(Z,\OO_Z)$ is an extension of $k_X := H^0(X,\OO_X)$ of
  degree $p^e$ with  $e \in \{0,1\}$, and if
  $q_Z := h^1(Z, \OO_Z)$ denotes the irregularity of $Z$, then 
  \begin{equation}\label{main-equation}
    p^e(1 - q_Z) = p - pq_X + \frac {mp(p-1)K_X^2} {12} \left ( 3 +
    m(2p-1) \right ).
  \end{equation}
  \end{enumerate}
\end{theorem}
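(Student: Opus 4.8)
The plan is to treat the two parts in turn: first establish the existence of a suitable $m$ in part~\eqref{item-existence-of-m} by a Frobenius-iteration argument, and then extract the two numerical identities in part~\eqref{item-existence-of-torsor} from Proposition~\ref{euler-characteristic-of-alpha-L-torsor-prop} together with the Riemann--Roch formula of Theorem~\ref{riemann-roch-for-l.c.i.-surfaces-theorem}.

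For part~\eqref{item-existence-of-m}, I would first note that since $X$ is Gorenstein and projective, Serre duality gives $h^1(X,\omega_X) = h^1(X,\OO_X) = q_X > 0$, so $H^1(X,\omega_X) \neq 0$. Because $\omega_X\inv$ is ample, Serre vanishing combined with Serre duality shows that $H^1(X,\omega_X^{\otimes M}) = 0$ once $M$ is large (indeed $H^1(X,\omega_X^{\otimes M})$ is dual to $H^1(X,\omega_X^{\otimes(1-M)})$, and $\omega_X^{\otimes(1-M)}$ is a high power of the ample $\omega_X\inv$). Now consider the chain of absolute-Frobenius pullbacks $H^1(X,\omega_X^{\otimes p^j}) \to H^1(X,\omega_X^{\otimes p^{j+1}})$ for $j \geq 0$, using that $\Frobenius_X^\ast$ raises a line bundle to its $p$-th power. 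The source at $j=0$ is nonzero while the target vanishes for $j$ large, so not every map in the chain can be injective; choosing the first non-injective step $j_0$ and setting $m := p^{j_0}$, the bundle $\sL := \omega_X^{\otimes m}$ satisfies $(\ast)$.

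For part~\eqref{item-existence-of-torsor}, the construction recalled in \S\ref{alpha-L-torsors-subsection} produces from a Frobenius-killed class a nontrivial $\alpha_{\sL}$-torsor $f:Z \to X$, and Proposition~\ref{euler-characteristic-of-alpha-L-torsor-prop} shows $Z$ is an integral, projective, l.c.i.~surface with $\omega_Z \cong f^\ast(\omega_X \otimes \sL^{\otimes p-1})$. I would first check $Z$ is del Pezzo: since $\sL$ is numerically $\omega_X^{\otimes m}$, the class $\omega_X \otimes \sL^{\otimes p-1}$ is numerically $\omega_X^{\otimes(1+m(p-1))}$ with $1 + m(p-1) > 0$, hence anti-ample, and its pullback under the finite morphism $f$ keeps $\omega_Z\inv$ ample. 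Lemma~\ref{global-sections-of-Z-lemma} applied to the degree-$p$ morphism $f$ shows $[k_Z:k_X]$ divides $p$, so it equals $p^e$ for some $e \in \{0,1\}$.

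It remains to derive \eqref{eqn-degree-of-Z} and \eqref{main-equation}. For the degree, I would compute $K_Z^2$ as a self-intersection of $f^\ast(\omega_X \otimes \sL^{\otimes p-1})$ on $Z$ over $k_Z$, combining the projection formula $(f^\ast D)^2_{k_X} = p \cdot D^2_{k_X}$ for the degree-$p$ map $f$ with the comparison $(\,\cdot\,)_{k_X} = p^e(\,\cdot\,)_{k_Z}$ coming from residue-field degrees; using $D^2_{k_X} = (1+m(p-1))^2 K_X^2$ then yields \eqref{eqn-degree-of-Z}. For the main identity, Proposition~\ref{euler-characteristic-of-alpha-L-torsor-prop}\eqref{item-euler-char} gives $\chi(f_\ast\OO_Z) = \sum_{i=0}^{p-1}\chi(\sL^{\otimes -i})$, while the Fano vanishing $h^2(\OO_Z) = h^0(\omega_Z) = 0$ (and likewise $h^2(\OO_X) = 0$) gives $\chi(\OO_Z) = 1 - q_Z$ and $\chi(\OO_X) = 1 - q_X$; combined with $\chi(f_\ast\OO_Z) = [k_Z:k_X]\chi(\OO_Z)$ the left side becomes $p^e(1-q_Z)$. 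Evaluating each $\chi(\sL^{\otimes -i})$ by Theorem~\ref{riemann-roch-for-l.c.i.-surfaces-theorem} with $\sL^{\otimes -i}$ numerically $-imK_X$, and summing the resulting arithmetic sums $\sum i = \binom{p}{2}$ and $\sum i^2$, reproduces the right side of \eqref{main-equation}. The main obstacle is bookkeeping the field-of-definition factor $p^e$ correctly: intersection numbers and Euler characteristics on $Z$ are taken over $k_Z$ rather than $k_X$, and essentially the entire content of the formulas lies in tracking how this rescaling interacts with the projection formula and with Proposition~\ref{euler-characteristic-of-alpha-L-torsor-prop}; the remaining computations are routine.
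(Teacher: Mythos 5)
Your proposal is correct and follows essentially the same route as the paper: part (1) is the Serre duality plus Serre vanishing Frobenius-iteration argument that the paper invokes in one line, and part (2) assembles Proposition~\ref{euler-characteristic-of-alpha-L-torsor-prop}, Lemma~\ref{global-sections-of-Z-lemma}, the Riemann--Roch theorem for l.c.i.~surfaces, and the vanishing $h^2(X,\OO_X) = h^2(Z,\OO_Z) = 0$ exactly as the paper does. The only cosmetic difference is that you spell out the projection-formula and field-rescaling bookkeeping that the paper compresses into its single displayed formula $K_Z^2 = \frac{\deg f}{[k_Z:k_X]}(1+m(p-1))^2K_X^2$.
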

\begin{proof}
  The existence of an integer $m$ as in \eqref{item-existence-of-m}
  is an immediate consequence of Serre's
  theorems on duality and vanishing of higher cohomology.  Let $\sL$ 
  be any line bundle satisfying the hypothesis of
  \eqref{item-existence-of-torsor}, and let $Z$ be
  any $\alpha_{\sL}$-torsor $Z$ associated to a nonzero Frobenius-killed
  cohomology class $\bar \xi \in H^1(X, \sL)$. 
  By Proposition \ref{euler-characteristic-of-alpha-L-torsor-prop},
  the torsor $Z$ is an l.c.i.~variety with dualizing sheaf $\omega_Z \cong
  f^\ast(\omega_X \otimes \sL^{\otimes 
    p  -1})$.
  Hence, we can compute the anti-canonical degree of $Z$ (over $k_Z$) as
  \begin{equation*}
    K_Z^2  = \frac{\deg f}{[k_Z:k_X]} \cdot (1 + m(p-1))^2K_X^2.
  \end{equation*}
  Since $\deg f = p$, Lemma~\ref{global-sections-of-Z-lemma} implies
  that $[k_Z:k_X] = p^e$ with $e \in \{0,1\}$, which 
  proves \eqref{eqn-degree-of-Z}.

  Since both $\omega_X\inv$ and $\sL\inv$ are ample line bundles on
  $X$ and $f$ is a 
  finite morphism, the line bundle $\omega_Z\inv$ is ample 
  and $Z$ is therefore an l.c.i.~del Pezzo surface.
  Moreover, Proposition
  \ref{euler-characteristic-of-alpha-L-torsor-prop} 
  gives the equality
  $\chi(f_\ast\OO_Z) = \sum_{i=0}^{p-1} \chi(\sL^{\otimes -i}).$
  The Riemann-Roch theorem
  shows that $\chi(\sL^{\otimes -i})$ is independent of the numerical
  equivalence class of $\sL^{\otimes -i}$ and hence that
\begin{align*}
  \chi(\sL^{\otimes -i}) &=  \chi(\omega^{\otimes -mi}) \\
  &= \chi(\OO_X) + \frac{mi(mi+1)}{2}{K_X^2}.
\end{align*}
 We substitute this into our expression for $\chi(f_\ast \OO_Z)$ and
 use the well-established 
 formulae for summing consecutive integers and their squares to obtain
 \begin{align*}
   \chi(f_\ast\OO_Z) & = 
   p\chi(\OO_X) + \frac {K_X^2} 2 \sum_{i=0}^{p-1} (m^2 i^2 + mi)\\
   & = p\chi(\OO_X) +  \frac {mp(p-1)K_X^2} {12} \left ( 3 +
   m(2p-1) \right ).
  \end{align*}
Because $X$ and $Z$ are each del Pezzo surfaces, Serre duality implies
that  $H^2(X, \OO_X) = 0 = H^2(Z, \OO_Z)$.
  Therefore, $\chi(f_\ast \OO_Z) = p^e(1 - q_Z)$ and
  $\chi(\OO_X) = 1 - q_X$.
\end{proof}

%

Main Theorem~\eqref{item-main-theorem-bound} follows as an immediate
corollary:

\begin{corollary}\label{q-and-d-inequality-corollary}
  If  $X$ is a normal, l.c.i.~del Pezzo surface of degree $d$ and irregularity
  $q >0$,  then there exists a nontrivial
  $\alpha_{\omega_X^{ \otimes m}}$-torsor, $Z$, for which $[k_Z:k_X] =
  p^e$ for some integers $m \geq 1$ and $e \in \{0,1\}$.
  Furthermore, any such integers satisfy
  \begin{align}
    q & \geq 1 - \frac{1}{p^{1-e}} + \frac{md(p-1)(3 + m(2p-1))}{12}
    \notag \\
    & \geq  \frac{d(p^2 -1)}6, \label{m=1-geq} 
  \end{align}
  with equality  in \eqref{m=1-geq}
  only if $e = 1$ and $m = 1$.
\end{corollary}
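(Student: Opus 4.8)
The plan is to read this off directly from Theorem~\ref{numerical-formula-theorem}, converting the equality \eqref{main-equation} into an inequality and then optimizing over the two free parameters $m$ and $e$. The existence of a nontrivial $\alpha_{\omega_X^{\otimes m}}$-torsor $Z$ together with integers $m \geq 1$ and $e \in \{0,1\}$ is immediate from parts \eqref{item-existence-of-m} and \eqref{item-existence-of-torsor} of the theorem, applied with $\sL = \omega_X^{\otimes m}$, since we are assuming $q = q_X > 0$.

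For the numerical bound I would first solve \eqref{main-equation} for $q = q_X$ by dividing through by $p$, obtaining an expression of the shape
$$q = 1 - p^{e-1}(1 - q_Z) + \frac{m(p-1)d}{12}\left(3 + m(2p-1)\right).$$
The one genuinely non-formal input is that $q_Z = h^1(Z, \OO_Z) \geq 0$, so that $1 - q_Z \leq 1$; since the coefficient $-p^{e-1}$ is negative, this gives $-p^{e-1}(1 - q_Z) \geq -p^{e-1} = -1/p^{1-e}$, and substituting produces the first displayed inequality of the corollary. In effect, passing from the theorem's equality to the corollary's inequality discards exactly the nonnegative quantity $p^{e-1} q_Z$.

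It then remains to minimize the right-hand side over $m \geq 1$ and $e \in \{0,1\}$. I expect the two terms to separate cleanly. The term $1 - 1/p^{1-e}$ takes the value $1 - 1/p > 0$ at $e = 0$ and the value $0$ at $e = 1$, so it is minimized at $e = 1$. The term $\frac{m(p-1)d}{12}(3 + m(2p-1))$ is strictly increasing in $m$ for $m \geq 1$ (as $d, p-1 > 0$), so it is minimized at $m = 1$, where the identity $3 + (2p-1) = 2(p+1)$ collapses it to precisely $\frac{d(p^2-1)}{6}$. Adding the two separate minima yields \eqref{m=1-geq}.

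I do not anticipate a real obstacle: all of the substance lives in Theorem~\ref{numerical-formula-theorem}, and what remains is bookkeeping. The only place demanding a little care is the equality discussion, where I would note that saturating \eqref{m=1-geq} requires $e = 1$ (to annihilate the term $1 - 1/p^{1-e}$) and $m = 1$ (to saturate the quadratic-in-$m$ term) simultaneously, which gives the stated ``only if'' condition; one may also observe, although it is not part of the asserted conclusion, that equality throughout additionally forces $q_Z = 0$ via $1 - q_Z = 1$.
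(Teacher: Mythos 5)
Your proposal is correct and follows exactly the paper's own (much terser) argument: the first inequality comes from \eqref{main-equation} together with $q_Z \geq 0$, and the second from $e \leq 1$ and $m \geq 1$, with the equality discussion handled the same way. Your writeup simply fills in the algebra that the paper leaves implicit.
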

\begin{proof}
  For the first inequality, use \eqref{main-equation} of
  Theorem \ref{numerical-formula-theorem} and the fact
  $q_Z = h^1(Z, \OO_Z) \geq 0$.  For the second inequality, 
  use $e \leq 1$ and $m \geq 1$. 
\end{proof}

In the case when $q_X = 1$, the values of $p,m,q_Z, K_X^2,$ and
$h^0(X,\omega_X\inv)$ are 
completely determined by that of $e \in \{0,1\}.$  Later we construct
examples of regular del Pezzo surfaces exhibiting these values
for either choice of $e \in \{0,1\}$
(cf. \S\ref{geometric-construction-section}). 
\begin{corollary}\label{numerical-case-q=1-corollary}
  If $X$ is a normal, l.c.i.~del Pezzo surface over a field
  of characteristic $p$ with irregularity $h^1(X,\OO_X) = 1$ and 
  $Z$ is 
  a nontrivial $\alpha_{\omega_X^{\otimes m}}$-torsor for an integer
  $m \geq 1$,
  then $m = 1$, $p = 2$, and the anti-canonical
  degree $K_X^2 = [k_Z: k_X] = 2^e$ for $e \in \{0,1\}$.
  Moreover,
  the cohomology group
  $H^1(Z, \OO_Z) = 0$, and for all $n \geq 1$,
  $$h^0(X,\omega_X^{\otimes -n}) = \frac{n(n+1)}{2^{(1-e)}}.$$
\end{corollary}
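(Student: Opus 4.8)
The plan is to read off the arithmetic consequences from the single identity \eqref{main-equation} of Theorem~\ref{numerical-formula-theorem} together with Corollary~\ref{q-and-d-inequality-corollary}, and then to upgrade the resulting Euler-characteristic computation to a statement about $h^0$ by establishing the vanishing of the intermediate cohomology.

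First I would pin down the characteristic. Since $X$ is a del Pezzo surface its degree satisfies $d = K_X^2 \geq 1$, so the inequality $q \geq \frac{d(p^2-1)}{6}$ of \eqref{m=1-geq}, with $q = 1$, forces $p^2 - 1 \leq 6$ and hence $p = 2$. Specializing \eqref{main-equation} to $p = 2$ and $q_X = 1$ makes the linear term $p - pq_X$ vanish and leaves
$$2^e(1 - q_Z) = \frac{m(m+1)K_X^2}{2}.$$
The right-hand side is strictly positive (as $m \geq 1$ and $K_X^2 \geq 1$), while $1 - q_Z$ is an integer that is at most $1$; positivity therefore forces $q_Z = 0$, which gives $H^1(Z,\OO_Z) = 0$ and the identity $m(m+1)K_X^2 = 2^{e+1} \leq 4$. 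Since $m(m+1) \geq 6$ once $m \geq 2$, we must have $m = 1$, whence $K_X^2 = 2^{e+1}/2 = 2^e$. Finally $[k_Z:k_X] = p^e = 2^e$ by Theorem~\ref{numerical-formula-theorem}, so $K_X^2 = [k_Z:k_X] = 2^e$ as claimed.

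For the formula for $h^0(X,\omega_X^{\otimes -n})$ I would first compute the Euler characteristic. Applying Riemann--Roch (Theorem~\ref{riemann-roch-for-l.c.i.-surfaces-theorem}) to $D = -nK_X$, and using $\chi(\OO_X) = 1 - q_X = 0$ (the term $h^2(X,\OO_X) = h^0(X,\omega_X)$ vanishes since $\omega_X$ is the inverse of an ample bundle on the integral scheme $X$), yields $\chi(\omega_X^{\otimes -n}) = \tfrac12 n(n+1)K_X^2 = n(n+1)2^{e-1} = \frac{n(n+1)}{2^{1-e}}$, which is exactly the target value. It therefore suffices to prove $h^1 = h^2 = 0$ for $\omega_X^{\otimes -n}$ when $n \geq 1$. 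The top cohomology is immediate: Serre duality gives $h^2(X,\omega_X^{\otimes -n}) = h^0(X,\omega_X^{\otimes(n+1)}) = 0$, again because $\omega_X^{\otimes(n+1)}$ is the inverse of an ample bundle.

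The remaining and most delicate point is the vanishing $h^1(X,\omega_X^{\otimes -n}) = 0$ for all $n \geq 1$, and this is where I expect the real work to lie: Kodaira-type vanishing is unavailable here, as $H^1(X,\OO_X) \neq 0$ already shows, so the torsor must be invoked. For the base case $n = 1$ I would feed the filtration of Proposition~\ref{euler-characteristic-of-alpha-L-torsor-prop}, which for $p = 2$ and $m = 1$ reads $0 \to \OO_X \to f_\ast\OO_Z \to \omega_X^{\otimes -1} \to 0$, into its long exact cohomology sequence; since $H^1(X, f_\ast\OO_Z) = H^1(Z,\OO_Z) = 0$ and $H^2(X,\OO_X) = 0$, this sandwiches $H^1(X,\omega_X^{\otimes -1}) = 0$. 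Consequently $h^0(X,\omega_X^{\otimes -1}) = 2^e \geq 1$, so an effective anticanonical divisor $C \in |{-}K_X|$ exists; being ample it is connected, and adjunction gives $\omega_C \cong \OO_C$ and $p_a(C) = 1$. For $n \geq 2$ I would then induct via $0 \to \omega_X^{\otimes -(n-1)} \to \omega_X^{\otimes -n} \to \omega_X^{\otimes -n}|_C \to 0$: the first term has vanishing $H^1$ by the inductive hypothesis, while $\omega_X^{\otimes -n}|_C$ is ample on $C$, so by Serre duality on the Gorenstein curve $C$ (using $\omega_C \cong \OO_C$) its $H^1$ is dual to the sections of the anti-ample bundle $\omega_X^{\otimes n}|_C$, which vanish. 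The long exact sequence then gives $h^1(X,\omega_X^{\otimes -n}) = 0$, completing the induction and hence the identity $h^0(X,\omega_X^{\otimes -n}) = \chi(\omega_X^{\otimes -n}) = \frac{n(n+1)}{2^{1-e}}$.
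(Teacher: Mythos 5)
The numerical half of your argument is correct and is essentially the paper's own proof in reorganized form: the paper substitutes $q_X = 1$ into \eqref{main-equation}, notes that the right-hand side is positive (forcing $q_Z = 0$, hence $H^1(Z,\OO_Z)=0$), and solves the resulting Diophantine equation $p^e = \frac{mp(p-1)K_X^2}{12}(3+m(2p-1))$ by brute force; your derivation of $p=2$ from \eqref{m=1-geq} followed by the same equation amounts to the identical computation. Your base case $H^1(X,\omega_X^{-1}) = 0$, obtained from the structure sequence $0 \to \OO_X \to f_\ast\OO_Z \to \omega_X^{-1} \to 0$ together with $H^1(Z,\OO_Z) = 0$ and $H^2(X,\OO_X)=0$, is also correct.

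The gap is in your induction step, at the assertion that $H^0(C, \omega_X^{\otimes n}|_C) = 0$ simply because $\omega_X^{\otimes n}|_C$ is the dual of an ample bundle. The anticanonical member $C$ need not be integral: nothing in the hypotheses prevents it from being reducible or non-reduced (here $h^0(X,\omega_X^{-1}) \leq 2$ and the geometric base change of $X$ is badly singular, so degenerate anticanonical members are a genuine possibility), and the principle ``duals of ample line bundles on projective curves have no sections'' is \emph{false} for non-reduced curves. For a counterexample, let $C_0$ be the split ribbon $\mathbf{Spec}\,(\OO_{\P^1} \oplus \OO_{\P^1}(2))$, where $\OO_{\P^1}(2)$ is a square-zero ideal and $\pi: C_0 \to \P^1$ is the projection: this is a projective, Cohen--Macaulay, even Gorenstein curve; $A := \pi^\ast\OO_{\P^1}(1)$ is ample on it (ampleness only sees the reduction); yet $H^0(C_0, A^{-1}) = H^0(\P^1,\OO_{\P^1}(-1)) \oplus H^0(\P^1,\OO_{\P^1}(1))$ is two-dimensional. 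So anti-ampleness alone does not yield your vanishing, and your Serre-duality step on $C$ (which is fine, as $C$ is Gorenstein with $\omega_C \cong \OO_C$) then has nothing to feed on. Repairing this route requires finer information about $C$ as an anticanonical divisor --- controlling the degrees and multiplicities of its components via adjunction on $X$ --- which is delicate because those components are in general only Weil divisors on the normal surface $X$. The paper avoids the issue entirely by working on the positive side of the canonical: a nonzero class in $H^1(X,\omega_X^{\otimes n})$, $n \geq 2$, would, after iterating Frobenius and invoking Serre vanishing, give a nonzero Frobenius-killed class in some $H^1(X,\omega_X^{\otimes N})$ with $N \geq n$, hence by Theorem~\ref{numerical-formula-theorem}(2) a nontrivial $\alpha_{\omega_X^{\otimes N}}$-torsor, and the brute-force solution of \eqref{main-equation} forces $N = 1$, a contradiction; Serre duality then gives $h^1(X,\omega_X^{\otimes -n}) = h^1(X,\omega_X^{\otimes n+1}) = 0$, and Riemann--Roch concludes as you do.
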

\begin{proof}
  If $q_X = 1$, then the right-hand side of \eqref{main-equation} is
  positive,  
  forcing $q_Z = 0$.  Thus \eqref{main-equation} simplifies to
  \begin{equation}\notag
    p^e =  \frac {mp(p-1)K_X^2} {12} \left ( 3 +
    m(2p-1) \right ). 
  \end{equation}
  As all variables are positive integers, one can quickly solve by
  brute force. 
  If $e = 0$, then  $p = 2, K_X^2 = 1,$ and $m =
  1$.
  Similarly, if $e = 1$, then $p = 2, K_X^2 =2, m =1$.
  
  If $H^1(X, \omega_X^{\otimes n}) \neq 0$
  for some $n \geq 1$, 
  then Serre's theorem on the vanishing of higher cohomology would
  show the existence of some Frobenius-killed class in $H^1(X,
  \omega_X^{\otimes N})$, for some $N \geq n$, and then Theorem
  \ref{numerical-formula-theorem}(2) and our above argument
  shows that $N = 1$.  Thus, $H^1(X, \omega_X^{\otimes n}) = 0$
  for all $n > 1$.   
  By Serre duality, $h^1(X,\omega_X^{\otimes - n})=  
  h^1(X,\omega_X^{\otimes n+1}) = 0$ for any $n \geq 1$, and 
  Riemann-Roch therefore implies 
  $h^0(X,\omega_X^{\otimes -n}) = \chi(\omega_X^{\otimes -n}) =
  \frac{n(n+1)}2 K_X^2$.
\end{proof}

\section{Algebraic foliations on regular
  varieties}\label{general-construction-section} 
\setcounter{equation}{0}
In contrast to our task in \S\ref{numerical-bounds-section} of finding numerical restrictions on the
existence of regular del Pezzo surfaces with irregularity, we begin
the dual problem of constructing explicit examples of such surfaces.  The
$\alpha_{\sL}$-torsor construction of the previous section will again be
important to us, although we shall henceforth view them from an alternative
perspective. 
Beginning with a $k$-variety $Z$, equipped with an algebraic foliation
$\sF \subseteq T_{Z/k}$, one can construct a purely inseparable 
quotient morphism $f: Z \to Z/\sF$ that
factors the relative Frobenius morphism $\Frobenius_{Z/k}: Z \to
Z\times_{k, \Frobenius_{k}} k$.
If $Z \to X$ is an $\alpha_{\sL}$-torsor, there is a natural rank $1$
foliation given by the relative tangent bundle $T_{Z/X}$ that recovers
$X$ as the quotient $Z / \sF$, for $\sF := T_{Z/X}$.
The converse does not hold as the
quotient morphism $Z \to 
Z/\sF$ for an arbitrary (rank $1$) foliation $\sF$ is not necessarily an
$\alpha_{\sL}$-torsor for any choice of line bundle $\sL$ on $Z/\sF$.
However, when $p = 2$, 
this problem does not arise, and $Z$ may
indeed be recovered from the quotient $Z/\sF$ as some
$\alpha_{\sL}$-torsor (cf. \S\ref{section-p=2}).
Ekedahl developed this theory for smooth
varieties in \cite{eke2}, and in this section we generalize
his results to the setting of regular varieties.

\begin{proposition}\label{relating-numerical-invariants-of-X-and-Z-prop}
  Let $k$ be a finitely generated field extension of a perfect field
  $\F_2$ of \mbox{characteristic $2$}.
\begin{enumerate}
\item  If $Z$ is a regular $k$-variety and
  $\sF \subseteq T_{Z/k} \subseteq T_{Z/\F_2}$ is a rank $1$ foliation on $Z$ over
  the extension $k/\F_2$,
  then the quotient  $X := Z/\sF$ is a regular $k$-variety and
  the quotient morphism 
  $f:Z \to X$ is an $\alpha_{\sL}$-torsor for some line bundle $\sL$ on $X$.
\item  Additionally, if $Z$ is a del Pezzo surface 
  and $\sF^{\otimes 2} \cong \omega_{Z}$, then the quotient $X$ is a
  regular del Pezzo surface,
  the sheaf $\sL \inv \otimes \omega_X$ is a $2$-torsion
  line bundle, and 
  the following equations hold:
\begin{enumerate}
  \item $[k_Z:k_X]\cdot \chi(\OO_Z) = 2 \chi(\OO_X) + d_X$,
  \item $K_X^2 = \frac{[k_Z:k_X] \cdot K_Z^2}{8}$.
\end{enumerate}
\end{enumerate}
\end{proposition}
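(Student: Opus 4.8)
The plan is to establish the structural claims of part~(1) first---that $X := Z/\sF$ is a regular $k$-variety and that $f\colon Z\to X$ is an $\alpha_\sL$-torsor---and then to read off part~(2) from the dualizing-sheaf and Euler-characteristic formulae already available for such torsors. The decisive initial observation is that, because $k$ is finitely generated over the perfect field $\F_2$, the regular $k$-variety $Z$ is of finite type over $\F_2$ and is therefore \emph{smooth} over $\F_2$ (over a perfect base regularity and smoothness coincide). Consequently $T_{Z/\F_2}$ is an honest vector bundle, and the containment $\sF\subseteq T_{Z/k}\subseteq T_{Z/\F_2}$ realizes the rank one foliation as a $2$-closed line subbundle of $T_{Z/\F_2}$. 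This is exactly the setting to which Ekedahl's smooth-variety theory applies over $\F_2$: forming the quotient produces a scheme $X$ smooth over $\F_2$, together with a finite, purely inseparable morphism $f\colon Z\to X$ of degree $2$ factoring the relative Frobenius of $Z$. Smoothness over $\F_2$ yields regularity of $X$; the containment $\sF\subseteq T_{Z/k}$ forces $k$ into the ring of local $\partial$-constants, so that $X$ is a $k$-scheme, and integrality is inherited through $\OO_X\hookrightarrow\OO_Z$. Finally, because $p=2$, the quotient morphism is identified with an $\alpha_\sL$-torsor for a line bundle $\sL$ on $X$ by the correspondence of \S\ref{section-p=2}; I will carry along from there the identification $\sF\cong f^\ast\sL$, which is what couples the foliation to the torsor datum.

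For part~(2), I first note that regularity makes $X$ an l.c.i.\ variety, so Proposition~\ref{euler-characteristic-of-alpha-L-torsor-prop} applies and yields both $\omega_Z\cong f^\ast(\omega_X\otimes\sL)$ (using $p-1=1$) and $\chi(f_\ast\OO_Z)=\chi(\OO_X)+\chi(\sL^{-1})$. Feeding the identification $\sF\cong f^\ast\sL$ and the hypothesis $\sF^{\otimes 2}\cong\omega_Z$ into the first of these gives $f^\ast(\sL^{\otimes 2})\cong f^\ast(\omega_X\otimes\sL)$, hence $f^\ast(\omega_X\otimes\sL^{-1})\cong\OO_Z$. Since $f$ is finite flat of degree $2$, the norm map $N_f\colon\Pic Z\to\Pic X$ satisfies $N_f\circ f^\ast=[2]$, so $\omega_X\otimes\sL^{-1}$ is $2$-torsion, as claimed; in particular $\sL$ and $\omega_X$ are numerically equivalent, a fact used throughout the remaining computations. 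That $X$ is del Pezzo then follows: $\omega_Z^{-1}=f^\ast(\omega_X\otimes\sL)^{-1}$ is ample, ampleness descends along the finite surjection $f$, and $\omega_X\otimes\sL\equiv\omega_X^{\otimes 2}$ numerically forces $\omega_X^{-1}$ to be ample; the remaining del Pezzo axioms are inherited from $Z$ and from the regularity of $X$.

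The two displayed equations are now short Riemann--Roch computations on the l.c.i.\ surface $X$. Writing $\sL=\OO_X(L)$ with $L\equiv K_X$, Theorem~\ref{riemann-roch-for-l.c.i.-surfaces-theorem} gives $\chi(\sL^{-1})=\chi(\OO_X)+\tfrac{1}{2}L.(L+K_X)=\chi(\OO_X)+d_X$, whence $\chi(f_\ast\OO_Z)=2\chi(\OO_X)+d_X$; combined with the identity $\chi(f_\ast\OO_Z)=[k_Z:k_X]\cdot\chi(\OO_Z)$ recorded before Lemma~\ref{global-sections-of-Z-lemma}, this is equation~(a). For equation~(b) I compute the anti-canonical degree from $\omega_Z\cong f^\ast(\omega_X\otimes\sL)$ using the same field-degree normalization as in the proof of Theorem~\ref{numerical-formula-theorem}, namely $\deg_{k_Z}(f^\ast M)^2=\tfrac{\deg f}{[k_Z:k_X]}\deg_{k_X}(M^2)$ for $M=\omega_X\otimes\sL$. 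Since $M\equiv\omega_X^{\otimes 2}$ numerically, $\deg_{k_X}(M^2)=4K_X^2$, and $\deg f=2$ gives $K_Z^2=\tfrac{8K_X^2}{[k_Z:k_X]}$, which rearranges to equation~(b).

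The main obstacle is the regularity assertion in part~(1): Ekedahl's quotient theorem is stated for varieties smooth over a field, whereas the relevant $X$ is regular but typically \emph{not} smooth over the imperfect field $k$. The decisive maneuver is to work relative to the perfect subfield $\F_2$, over which $Z$ \emph{is} smooth, thereby bringing the smooth theory to bear; the subsequent care lies in verifying that the $\F_2$-quotient is compatible with the $k$-structure---so that $X$ is genuinely a $k$-variety and $f$ is the anticipated $\alpha_\sL$-torsor rather than merely an inseparable cover---and in pinning down the line bundle $\sL$ together with the relation $\sF\cong f^\ast\sL$. Once this torsor structure is secured, part~(2) is essentially bookkeeping with the dualizing-sheaf and Euler-characteristic formulae, the numerical equations falling out of Riemann--Roch and the behavior of intersection numbers under the finite morphism $f$.
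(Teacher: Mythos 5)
There is a genuine gap, and it sits exactly at the point you yourself flag as the main obstacle. Your part (1) rests on the assertion that, because $k$ is a finitely generated field extension of $\F_2$, the regular $k$-variety $Z$ ``is of finite type over $\F_2$ and is therefore smooth over $\F_2$.'' This is false: a finitely generated field \emph{extension} is not a finitely generated $\F_2$-\emph{algebra} (e.g.\ $\F_2(t)$ is not, since any finitely generated subalgebra of $\F_2(t)$ inverts only finitely many irreducible polynomials), so the composite $Z \to \Spec k \to \Spec \F_2$ is not of finite type, and smoothness over $\F_2$ is not even defined for it, as smooth morphisms are by definition locally of finite presentation. What is true is that $Z$ is \emph{essentially} of finite type over $\F_2$ and, being regular, is a localization of a smooth $\F_2$-variety. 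But then Ekedahl's quotient theorem (Theorem \ref{Ekedahl-quotient-by-foliation-theorem}), which is stated for smooth varieties of finite type over a perfect field, does not apply to $Z$ directly, and your entire derivation of part (1) --- regularity of $X$, flatness and the degree of $f$, and the exact sequence yielding $f^\ast\omega_X \cong \omega_Z \otimes \sF\inv$, which you also need in part (2) --- is left without proof.

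Closing this gap is precisely the content of the paper's Proposition \ref{dualizing-sheaf-generic-fibre-proposition}, whose proof is a spreading-out argument: choose a finitely generated $\F_2$-subalgebra $A \subseteq k$ with fraction field $k$; descend $Z$ and $\sF$ to a model $Z_A$ with $\sF_A \subseteq T_{Z_A/A} \subseteq T_{Z_A/\F_2}$; shrink $\Spec A$ so that $Z_A$ is regular (hence smooth over the perfect field $\F_2$), so that $\sF_A$ and $T_{Z_A/\F_2}/\sF_A$ are locally free, and so that the Lie-bracket and $p$-th power conditions --- which hold at the generic point by hypothesis --- hold over all of $\Spec A$; apply Ekedahl's theorem to $Z_A$; and finally identify $Z/\sF$ with the generic fibre of $Z_A/\sF_A$ because quotients by foliations commute with localization. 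This step cannot be waved away: it is the whole point of \S\ref{general-construction-section}'s extension of Ekedahl's theory from smooth to regular varieties. Two smaller remarks: the identification $\sF \cong f^\ast\sL$ is not stated in Proposition \ref{Ekedahl-degree-2-cover-prop}, so you should derive it (as the paper does) by combining $f^\ast\omega_X \cong \omega_Z \otimes \sF\inv$ with $\omega_Z \cong f^\ast(\omega_X \otimes \sL)$ from Proposition \ref{euler-characteristic-of-alpha-L-torsor-prop}; and, modulo these repairs, your part (2) is correct and is in substance the same computation the paper performs by invoking Theorem \ref{numerical-formula-theorem}(2) with $p = 2$, $m = 1$, your norm-map justification of the $2$-torsion claim and the direct Riemann--Roch bookkeeping being fine.
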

\noindent The fruits of our labor will  be harvested in \S
 \ref{geometric-construction-section}, when we carefully find two such
 foliations $\sF_1$ and $\sF_2$ 
 on a specific variety $Z$. The resulting quotients $X_i = Z/\sF_i$ are
 regular del Pezzo surfaces with irregularity $q = 1$, and in this
 specific case, the line bundle $\sL$ can be identified precisely
 as the dualizing sheaf $\omega_X$
 (cf.~Cor.\ref{cor-L-is-dualizing-sheaf}).

\subsection{Quotients by foliations}\label{smooth-1-foliations-subsection}
\setcounter{equation}{0}

First we generalize the definition of
a foliation on a smooth variety (cf. \cite{eke2})
to the case of a regular variety over an
 imperfect field. 

\begin{definition}\label{definition-foliation}
  Let $Z$ be a regular variety over a field extension $k$ 
  of a perfect field $\F$ of characteristic $p$. 
  A \emph{foliation (over the extension $k/\F$)}
  on $Z$ is a locally free $\OO_Z$-submodule $\sF \subseteq T_{Z/k}
  \subseteq T_{Z/\F}$ preserved by the Lie
  bracket and the $p$-th power operation  (i.e. a sub-$p$-Lie
  algebra of $T_{Z/k}$) whose cokernel $T_{Z/\F}/\sF$ is locally free.
  The \emph{rank} of a foliation $\sF$ is its rank as a
  locally free $\OO_Z$-module.  
\end{definition}

This definition recovers the usual notion of a foliation
(cf. \cite{eke2}) in the
case where $Z$ is a smooth 
variety over a perfect field $k = \F$.  Our more
general definition is contrived so that when $\pi:\sZ \to \sB$ is a
morphism of varieties from a smooth variety $\sZ$ over a perfect
field $\F$, any foliation $\sF$ on $\sZ$ (in the sense of
\cite{eke2}) that
is vertical with respect to $\pi$ (i.e.~$\sF \subseteq T_{\sZ/\sB}$)
will restrict to the generic fibre of $\pi$ as a
foliation (in our sense) over the extension $\F(\sB)/\F$.

The utility of algebraic foliations comes from the fact that one can
use them to quotient varieties to obtain purely
inseparable finite morphisms:

\begin{definition/lemma}
  Let $k/\F$ be a 
  field extension of a perfect field 
  $\F$ of characteristic $p$.
  If $\sF$ is a foliation over the extension $k/\F$ on a regular
  $k$-variety $Z$, then there is a $k$-variety $Z/\sF$, which we call
  the \emph{quotient} of $Z$ by $\sF$, along
  with a purely inseparable morphism $f:Z \to Z/\sF$ that factors the
  relative Frobenius morphism $\Frobenius_{Z/k}$ and is given locally by the
  inclusion of subrings $\OO_Z^p \subseteq \OO_{Z/\sF} \subseteq
  \OO_Z$, where 
  $$\OO_{Z/\sF}:= \{ f \in \OO_Z : \delta(f) = 0 ~\textrm{for all
    local derivations }~ \delta \in \sF\}.$$
\end{definition/lemma}
\begin{proof}
The construction of $Z/\sF$ is well-defined because the definition of
$\OO_{Z/\sF}$ commutes with localization, a result which
ultimately boils down to the fact that the ring of $p$th powers
$\OO_Z^p$ is killed by any derivation. 
Since $k$, in addition to $\OO_Z^p$, is killed by all
derivations in $\sF \subseteq T_{Z/k}$, the morphism $f$ 
factors the relative Frobenius morphism 
$\Frobenius_{Z/k}: Z \to Z \times_{k, \Frobenius_k} k$.  That is,
$\Frobenius_{Z/k} = g \circ f$ for a unique morphism $g: X \to
Z\times_{k,\Frobenius_k}k$.  In particular, both $f$ and $g$ are
purely inseparable morphisms.  Moreover, since $Z$ is finite-type over
$k$, the relative Frobenius morphism $\Frobenius_{Z/k}$ is a finite
morphism, 
and hence so are the morphisms $f$ and $g$.
 Since $Z$ is a finite-type over $k$, so is the base change 
$Z \times_{k,\Frobenius_k} k$ (with structure morphism given by projection
 onto the second factor). As $X$ is finite over $Z \times_{k, \Frobenius_k}
k$, it too is of finite type over $k$.
\end{proof}

For foliations
on smooth varieties over a perfect field $k = \F$, 
the following theorem of Ekedahl
provides vital information concerning the structure of the 
quotient.

\begin{theorem}[Ekedahl]\label{Ekedahl-quotient-by-foliation-theorem}
  Let $Z$ be a smooth $n$-dimensional variety over a perfect field
  $\F$.  Let $\sF \subseteq T_{Z/\F}$ be a 
   foliation of 
  rank $r$
  and $f:Z \to X := Z/\sF$ the quotient of $Z$ by this foliation.
  Furthermore denote by
  $g: X \to Z\times_{\F, \Frobenius_\F} \F$ the morphism so that
  $g \circ f = \Frobenius_{Z/\F}$ is the relative Frobenius morphism.
  Then the following hold:  
  \begin{enumerate}
  \item $X$ is a smooth $\F$-variety;
  \item $f$ and $g$ are finite flat morphisms of degrees $p^r$ and
    $p^{n-r}$, respectively;
  \item there is an exact sequence 
    $$0 \to \sF \to T_{Z/\F} \to f^\ast T_{X/\F} \to
    \Frobenius_{Z}^\ast \sF \to 0,$$
    and hence an isomorphism
    \begin{equation*}\label{dualizing-sheaf-equation}
      f^\ast \omega_{X/\F} \cong \omega_{Z/\F} \otimes (\det \sF)^{\otimes
      1 - p}.
    \end{equation*}
  \end{enumerate}
\end{theorem}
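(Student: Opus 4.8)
The plan is to reduce every assertion to an explicit computation in a local model governed by the structure theory of $p$-closed foliations, and then to extract the global statements by a single determinant calculation. Since claims (1) and (2) are local on $X$ and $Z$, I would first pass to the henselian (or formal) local ring at a point of $Z$ and choose local \'etale coordinates $x_1, \dots, x_n$, so that $\partial/\partial x_1, \dots, \partial/\partial x_n$ form a commuting $\OO_Z$-basis of $T_{Z/\F}$ with vanishing $p$-th powers. In these terms the subring $\OO_X = \OO_{Z/\sF}$ is the ring of $\sF$-invariants, squeezed between $\OO_Z^p$ and $\OO_Z$, and the whole theorem hinges on understanding this inclusion locally.

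The heart of the argument — and the step I expect to be the main obstacle — is the local structure theorem: that $\OO_X$ is again a regular local ring and that $\OO_Z$ is a free $\OO_X$-module of rank $p^r$. This is the geometric incarnation of Jacobson's Galois correspondence between sub-$p$-Lie algebras of derivations and height-one subextensions, and it is exactly here that the hypotheses that $\sF$ be involutive, $p$-closed, and a direct summand become indispensable; the restricted enveloping algebra $u(\sF)$, free of rank $p^r$ over $\OO_Z$, organizes the computation. I would verify regularity on the split local models: already in rank one these split into the \emph{additive} type, where a nowhere-vanishing generator straightens to $\partial/\partial x_1$, and the \emph{multiplicative} type, where it straightens to $x_1\,\partial/\partial x_1$. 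In both cases one checks directly that $\OO_X = \F[[x_1^p, x_2, \dots, x_n]]$, manifestly regular, with $\OO_Z$ free on $1, x_1, \dots, x_1^{p-1}$. The subtlety worth flagging is that the multiplicative type shows one cannot in general straighten $\sF$ to a span of coordinate vector fields — yet the quotient is smooth regardless. Granting this, (1) follows because regularity over the perfect field $\F$ is equivalent to smoothness and $\dim X = \dim Z = n$; the degree of $f$ is $p^r$ by the local basis count, while $\Frobenius_{Z/\F}$ has degree $p^n$, forcing $g$ to have degree $p^{n-r}$, and flatness of both maps is then supplied either by miracle flatness (finite morphisms between regular schemes of equal dimension) or directly by the local freeness just established.

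For the exact sequence in (3), I would first identify $\sF$ with the relative tangent sheaf $T_{Z/X}$: by the correspondence, every $\OO_X$-linear derivation of $\OO_Z$ already lies in $\sF$, so the inclusion $\sF \subseteq T_{Z/X}$ is an equality of locally free rank-$r$ sheaves. Dualizing the right-exact conormal sequence $f^\ast\Omega_{X/\F} \to \Omega_{Z/\F} \to \Omega_{Z/X} \to 0$ (using that $\Omega_{X/\F}$ is locally free, as $X$ is now known smooth) yields a left-exact sequence $0 \to \sF \to T_{Z/\F} \xrightarrow{df} f^\ast T_{X/\F}$. The cokernel of $df$ I would then compute in the straightened coordinates: writing $y_i = x_i^p$ for $i \le r$ and $y_i = x_i$ for $i > r$ as coordinates on $X$, the Jacobian of $f$ kills $\partial/\partial x_i$ for $i \le r$ and sends $\partial/\partial x_i \mapsto \partial/\partial y_i$ for $i > r$, so $\coker(df)$ is free of rank $r$ on the classes of $\partial/\partial y_1, \dots, \partial/\partial y_r$. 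Since $y_i = x_i^p$, these classes are canonically the absolute Frobenius pullbacks of the generators of $\sF$, which identifies $\coker(df)$ with $\Frobenius_Z^\ast \sF$; one checks this identification is independent of the coordinates chosen and therefore glues, producing the four-term exact sequence of locally free sheaves.

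Finally, taking top exterior powers across the four-term sequence converts it into the line-bundle isomorphism $\det T_{Z/\F} \otimes \det(\Frobenius_Z^\ast \sF) \cong \det(f^\ast T_{X/\F}) \otimes \det \sF$. Rewriting the determinants of tangent sheaves as inverse dualizing sheaves and using that absolute Frobenius pullback acts on line bundles by $\Frobenius_Z^\ast(\det \sF) \cong (\det \sF)^{\otimes p}$, this rearranges to $f^\ast \omega_{X/\F} \cong \omega_{Z/\F} \otimes (\det \sF)^{\otimes 1-p}$, which is the asserted isomorphism and completes (3). The one genuinely substantial input remains the local structure theorem of the second paragraph; everything downstream is either functorial or a determinant bookkeeping that I would expect to go through routinely.
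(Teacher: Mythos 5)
The paper itself offers no proof of this theorem: its entire ``proof'' is the citation to Ekedahl \cite[\S 3]{eke2}, so your proposal must stand on its own. Its architecture (local structure theorem, then conormal sequence, then determinants) is indeed the right one and matches Ekedahl's, and your final two paragraphs are essentially correct; the gaps are both located in the step you yourself identify as the heart. First: your local structure theorem --- $\OO_X$ regular and $\OO_Z$ free of rank $p^r$ over $\OO_X$ --- is only ever verified for $r=1$, while the theorem is stated for arbitrary rank $r$, and no reduction to rank one is given (nor does a naive induction work, since a rank-one $\OO_Z$-submodule of $\sF$ need not be $p$-closed). What is needed is a simultaneous straightening: locally one chooses a basis $\delta_1,\dots,\delta_r$ of $\sF$ and coordinates with the $r\times r$ matrix $(\delta_i(x_j))$ invertible (this uses the direct-summand hypothesis), renormalizes so that $\delta_i(x_j)=\delta_{ij}$, and then uses bracket-closure and $p$-closure to conclude $[\delta_i,\delta_j]=0$ and $\delta_i^p=0$ --- these elements lie in $\sF$, any element of $\sF$ is determined by its values on $x_1,\dots,x_r$, and they kill every $x_j$. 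Only then does the joint Taylor expansion give $\OO_Z=\bigoplus_{0\le I< p}\OO_X x^I$, with regularity of $\OO_X$ by faithfully flat descent. It is telling that you call involutivity ``indispensable'' yet never use it: in rank one bracket-closure is automatic, so an argument confined to rank one cannot see where that hypothesis enters.

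Second, your rank-one dichotomy is misapplied, and it puts your second and third paragraphs in contradiction. The multiplicative model $\langle x_1\,\partial/\partial x_1\rangle$ is not a foliation in the sense of this theorem: its cokernel in $T_{Z/\F}$ is $\OO_Z/(x_1)\oplus\OO_Z^{\oplus n-1}$, which has torsion at the zero of the vector field, violating the locally-free-quotient hypothesis. Conversely, that hypothesis forces the additive model: a local generator $\delta$ of a split rank-one foliation is nowhere vanishing, so $\delta(x_1)$ is a unit for some coordinate $x_1$; rescaling $\delta$ by that unit (which does not change $\sF$) gives $\delta(x_1)=1$, and then $p$-closure $\delta^p=h\delta$ forces $h=\delta^p(x_1)=0$, so $\delta^p=0$ and $\delta$ does straighten to $\partial/\partial x_1$. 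Your flag that ``one cannot in general straighten $\sF$ to a span of coordinate vector fields'' is therefore false under the theorem's hypotheses --- and your own third paragraph needs it to be false, since the computation of $\coker(df)$ there assumes exactly the straightened form $y_i=x_i^p$. Nor is the exclusion of the multiplicative case a curiosity: for $\sF=\langle x_1\,\partial/\partial x_1\rangle$ conclusion (3) genuinely fails, because $\ker(df)$ is the saturation $\langle\partial/\partial x_1\rangle$, which strictly contains $\sF$, so the four-term sequence cannot be exact. With these two repairs --- simultaneous straightening for general $r$, and the splitness argument eliminating the multiplicative case --- your identification $\sF=T_{Z/X}$, the conormal sequence, the gluing of $\coker(df)\cong\Frobenius_Z^\ast\sF$ (most cleanly made canonical via $D\mapsto(g\mapsto D(g^p))$ rather than by checking coordinate-independence), and the determinant bookkeeping all go through as you describe.
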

\begin{proof}
  See \cite[\S 3]{eke2}.
\end{proof}

We now partially extend this result for our applications to regular
varieties over finitely generated imperfect fields. 

\begin{proposition}\label{dualizing-sheaf-generic-fibre-proposition}
  Let $Z$ be a regular variety over a finitely
  generated field extension $k$ of a
  perfect field $\F$. 
  Let $\sF \subseteq T_{Z/k} \subseteq T_{Z/\F}$
  be a foliation of rank $r$ over the extension $k/\F$ 
  and $f: Z \to X$ the quotient of $Z$ by this foliation.
  Then the following hold: 
  \begin{enumerate}
  \item $X$ is a regular $k$-variety;
  \item $f$ is a flat morphism of degree $p^r$;

  \item there is an exact sequence
    $$0 \to \sF \to T_{Z/\F} \to f^\ast T_{X/\F} \to
    \Frobenius_{Z}^\ast \sF \to 0,$$
    and hence an isomorphism 
    \begin{equation}\label{equation-dualizing-sheaf-generic-fibre}
      f^\ast \omega_{X/k} \cong \omega_{Z/k} \otimes
      (\det \sF)^{\otimes 1 - p}.
    \end{equation}
  \end{enumerate}
\end{proposition}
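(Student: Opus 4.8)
The plan is to deduce the proposition from Ekedahl's Theorem~\ref{Ekedahl-quotient-by-foliation-theorem} by spreading $Z$ and $\sF$ out to a smooth model over $\F$ and then passing back to the generic fibre. First I would use the characterization of regularity recalled in the introduction to realize $Z$ as the generic fibre of a morphism $\pi: \sZ \to \sB$ of $\F$-varieties with $\sZ$ smooth over $\F$ and $k = \F(\sB)$. After shrinking $\sB$ to a dense affine open I may also assume that $\sB$ is smooth over $\F$ and that $\pi$ is flat with Gorenstein fibres, since both properties hold at the generic point and hence on a dense open. Locally $Z$ is then the localization of $\sZ$ at the multiplicative set $\OO_\sB \setminus \{0\}$, so the formation of $\Omega$, of duals, and of $\scr{H}om$ all commute with the passage from $\sZ$ to $Z$; in particular $\Omega_{Z/\F} = \Omega_{\sZ/\F}|_Z$ is locally free, $T_{Z/\F} = T_{\sZ/\F}|_Z$, and $T_{Z/k} = T_{\sZ/\sB}|_Z$.

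Since $\sF$ is a coherent subsheaf of $T_{Z/k} = T_{\sZ/\sB}|_Z$ and the conditions defining a foliation (local freeness of $\sF$ and of its cokernel, and closure under the Lie bracket and the $p$-th power operation) involve only finitely much data, after a further shrinking of $\sB$ they extend to a rank-$r$ vertical foliation $\wt{\sF} \subseteq T_{\sZ/\sB} \subseteq T_{\sZ/\F}$ on $\sZ$ that restricts to $\sF$ over the generic point. Applying Theorem~\ref{Ekedahl-quotient-by-foliation-theorem} to the smooth $\F$-variety $\sZ$ and $\wt{\sF}$ produces a quotient $\wt f: \sZ \to \wt X := \sZ/\wt\sF$ with $\wt X$ smooth over $\F$, with $\wt f$ finite flat of degree $p^r$, and with the four-term exact sequence over $\F$ together with Ekedahl's isomorphism $\wt f^\ast \omega_{\wt X/\F} \cong \omega_{\sZ/\F} \otimes (\det \wt\sF)^{\otimes 1-p}$. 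As $\wt\sF$ is vertical, $\pi$ factors as $\sZ \to \wt X \to \sB$.

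The heart of the argument is to check that this construction commutes with passage to the generic fibre. Because forming the sheaf of $\sF$-invariants commutes with localization — as established in the preceding Definition/Lemma — the invariants of $\wt\sF$ localize to the invariants of $\sF$, so that $X = Z/\sF$ is the generic fibre of $\wt X \to \sB$ and $f = \wt f|_Z$. Thus $X$ is the generic fibre of the smooth $\F$-variety $\wt X$, hence an integral, regular $k$-variety by the same criterion from the introduction, which gives~(1); and $f$ is the base change of the finite flat degree-$p^r$ morphism $\wt f$, so it is flat of degree $p^r$, which gives~(2). Restricting the four-term sequence along the (flat) localization $Z \to \sZ$ and using $\sF = \wt\sF|_Z$, $T_{Z/\F} = T_{\sZ/\F}|_Z$, $f^\ast T_{X/\F} = (\wt f^\ast T_{\wt X/\F})|_Z$, and $\Frobenius_Z^\ast \sF = (\Frobenius_\sZ^\ast \wt\sF)|_Z$ (the absolute Frobenius commuting with the localization) yields the exact sequence of~(3).

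It remains to extract the dualizing-sheaf isomorphism. Taking determinants in~(3) and using $\det \Frobenius_Z^\ast \sF = (\det \sF)^{\otimes p}$ gives $f^\ast \det T_{X/\F} \cong \det T_{Z/\F} \otimes (\det \sF)^{\otimes p-1}$. To rewrite this with the $k$-relative dualizing sheaves I would instead work relative to $\sB$: near the generic fibre the smoothness of $\sB/\F$ and the flatness of $\pi$ with Gorenstein fibres give $\omega_{\sZ/\F} \cong \omega_{\sZ/\sB} \otimes \pi^\ast \omega_{\sB/\F}$, and likewise $\omega_{\wt X/\F} \cong \omega_{\wt X/\sB} \otimes \rho^\ast \omega_{\sB/\F}$ for $\rho: \wt X \to \sB$. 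Substituting these into Ekedahl's isomorphism, the factors $\pi^\ast \omega_{\sB/\F} = \wt f^\ast \rho^\ast \omega_{\sB/\F}$ cancel, leaving $\wt f^\ast \omega_{\wt X/\sB} \cong \omega_{\sZ/\sB} \otimes (\det \wt\sF)^{\otimes 1-p}$; restricting to the generic fibre, where $\omega_{\sZ/\sB}|_Z = \omega_{Z/k}$ and $\omega_{\wt X/\sB}|_X = \omega_{X/k}$, produces the stated isomorphism $f^\ast \omega_{X/k} \cong \omega_{Z/k} \otimes (\det \sF)^{\otimes 1-p}$. I expect the main obstacle to be exactly this last bookkeeping between the $\F$-relative and $k$-relative dualizing sheaves: one must be sure that the ``transcendence-direction'' factor $\pi^\ast \omega_{\sB/\F}$ is a genuine line bundle that cancels, which is why it pays to shrink $\sB$ at the outset so that $\sB/\F$ is smooth and $\pi$ is flat with Gorenstein fibres; the other point demanding care is the compatibility of the quotient-by-foliation with the localization that cuts out the generic fibre.
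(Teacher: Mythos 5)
Your proposal is correct and follows essentially the same route as the paper: spread $Z$ and $\sF$ out to a smooth model over $\F$ (after shrinking the base so that regularity, local freeness, and the bracket/$p$-th power conditions persist), apply Ekedahl's Theorem~\ref{Ekedahl-quotient-by-foliation-theorem}, and localize back to the generic fibre, using that quotient-by-foliation commutes with localization. Your final bookkeeping with $\pi^\ast\omega_{\sB/\F}$ cancelling against $\wt f^\ast\rho^\ast\omega_{\sB/\F}$ is precisely the content of the paper's Lemma~\ref{dualizing-sheaf-of-generic-fibre-lemma}, re-proved inline.
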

\begin{proof}
  Choose a sufficiently
  large finitely generated sub-$\F$-algebra $A \subseteq k$ so
  that $Z$ descends to a finite-type integral $A$-scheme $Z_A$, $\sF$ 
  descends to a subsheaf $\sF_A \subseteq T_{Z_A/A} \subseteq
  T_{Z_A/\F}$, and the 
  fraction field of  $A$ equals $k$.  This is possible because $Z$
  is of finite-type over $k$ and $\sF$ is a submodule of
  the coherent $\OO_{Z}$-module $T_{Z/\F}$; the $\OO_Z$-module
  $T_{Z/\F}$ is coherent 
  because $Z$ is of finite-type over a finitely generated field extension
  of $\F$.
  
  It is a classical result that the regular locus of a locally
  Noetherian scheme is an open locus (cf. \cite[Thm.~24.4]{mat}).
  Since $Z = Z_A \times_A k$ is
  regular, the regular locus on $Z_A$ is a non-empty open neighborhood of
  the generic fibre $Z$, and its image in $A$ will be an open
  neighborhood $U$ of the generic point of $A$. By 
  replacing $\Spec A$ by a sufficiently small affine subset of $U$, we may
  assume that both $Z_A$ and $\Spec A$ are regular $\F$-varieties.
  Consequently, both $Z_A$ and $\Spec A$ are 
  smooth over $\F$ since $\F$ is a perfect field
  (cf. \cite[Lem.~038V]{stacks-project}).  
  
  Because $\sF$ is a foliation, $\sF \cong \sF_A
  \otimes_A k$ 
  and $T_{Z/\F}/\sF \cong 
  (T_{Z_A/\F}/\sF_A) \otimes_A k$ are locally free.  Therefore, by
  replacing 
  $\Spec A$ by an even smaller open subscheme, we may
  assume that both $\sF_A$ and $T_{Z_A/\F}/\sF_A$ are finite locally
  free $\OO_A$-modules.
  Consider the $\OO_A$-module homomorphism
  $\sF_A \otimes \sF_A \to T_{Z_A/\F}/\sF_A$ induced by the Lie
  bracket and the $\OO_A$-module homomorphism
  $\sF_A \to T_{Z_A/\F}/\sF_A$ induced by the $p$th power operation.
  Notice that 
  both of these homomorphisms are $0$ when localized at the generic point
  of $\Spec A$ precisely because of our hypothesis that $\sF$ is a
  foliation on the generic 
  fibre $Z$.
  By the upper semi-continuity of rank,  we may restrict $\Spec A$ even
  further so that these morphisms are $0$ over all of $\Spec A$, which
  is equivalent to $\sF_A$ being a foliation on the smooth variety
  $Z_A$ over $\F$.  
  
  Now, we may
  apply Theorem \ref{Ekedahl-quotient-by-foliation-theorem} to $Z_A$
  and $\sF_A \subseteq T_{Z_A/A} \subseteq T_{Z_A/\F}$
  to obtain a smooth quotient $X_A := Z_A/\sF_A$.  
  The generic fibre
  $X_A\times_A k$ is therefore a regular variety.
  Because taking quotients by foliations
  is a local operation, $Z/\sF \cong X_A \times_A k$, proving (1).  
  Assertion (2) holds by localizing the morphism $f: Z_A \to X_A$,
  which is finite and flat of degree $r$ by Theorem
  \ref{Ekedahl-quotient-by-foliation-theorem}. The exact sequence in
  (3) follows by localizing that of 
  Theorem~\ref{Ekedahl-quotient-by-foliation-theorem}(3).
  The isomorphism in (3) follows by taking the determinant of this
  sequence, which yields 
  $$f^\ast \omega_{X_A/\F}|_Z \cong \omega_{Z_A/\F}|_Z
  \otimes (\det \sF)^{\otimes  1 - p},$$
  and then applying Lemma
  \ref{dualizing-sheaf-of-generic-fibre-lemma} 
  to each of the morphisms $Z_A \to A$ and $X_A \to A$. 
\end{proof}

\begin{lemma}\label{dualizing-sheaf-of-generic-fibre-lemma}
  Let $\pi:\ex \to \sB$ be an l.c.i.~morphism of l.c.i.~varieties over a
  field $\F$.
  Let $k := \F(\sB)$ denote the 
  function field of $\sB$.  Then the dualizing
  sheaf of the generic fibre $X := \ex \times_{\sB} k$ 
  is just the restriction of the
  dualizing sheaf of $\ex$:
  $$\omega_{X/k} =  \omega_{\ex/\F}|_X.$$
\end{lemma}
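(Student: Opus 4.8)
The plan is to deduce the equality from two standard structural properties of relative dualizing sheaves of l.c.i.\ morphisms: compatibility with base change and transitivity along a composition. Since $\pi:\ex \to \sB$ is l.c.i.\ and both $\ex$ and $\sB$ are l.c.i.\ over $\F$, each of the morphisms $\ex \to \sB$, $\sB \to \Spec \F$, and $\ex \to \Spec \F$ is l.c.i., so each carries an invertible relative dualizing sheaf; in particular $\omega_{\ex/\sB}$, $\omega_{\sB/\F}$, and $\omega_{\ex/\F}$ are all line bundles. The generic fibre $X$ is the base change of $\ex$ along the localization $\Spec k \to \sB$ at the generic point $\eta$, so the projection $j:X \to \ex$ is flat.

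First I would identify $\omega_{X/k}$ with the restriction along $j$ of the relative dualizing sheaf $\omega_{\ex/\sB}$. Locally $\pi$ factors as a regular immersion $\ex \hookrightarrow P$ of some codimension $c$ followed by a smooth morphism $P \to \sB$, whence $\omega_{\ex/\sB} \cong \det(N_{\ex/P}) \otimes \det(\Omega_{P/\sB})|_\ex$, where $N_{\ex/P} = (\sI/\sI^2)^\vee$ is the normal bundle of the immersion. Both the conormal sheaf $\sI/\sI^2$ and $\Omega_{P/\sB}$ commute with arbitrary base change, so the very same local formula computes the dualizing sheaf of the base-changed morphism $X \to \Spec k$; this produces a canonical isomorphism $\omega_{X/k} \cong j^\ast \omega_{\ex/\sB}$.

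Next I would invoke transitivity of dualizing sheaves for the composite of the l.c.i.\ morphisms $\ex \xrightarrow{\pi} \sB \to \Spec \F$, namely $\omega_{\ex/\F} \cong \omega_{\ex/\sB} \otimes \pi^\ast \omega_{\sB/\F}$. Restricting along $j$ and using the previous step gives $\omega_{\ex/\F}|_X \cong \omega_{X/k} \otimes j^\ast \pi^\ast \omega_{\sB/\F}$, so it only remains to see that the last factor is trivial. But the composite $\pi \circ j: X \to \sB$ factors through the generic point $\eta = \Spec k$, so $j^\ast \pi^\ast \omega_{\sB/\F}$ is the pullback to $X$ of the stalk $(\omega_{\sB/\F})_\eta$, a one-dimensional $k$-vector space, and is therefore isomorphic to $\OO_X$. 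Combining the two isomorphisms yields $\omega_{\ex/\F}|_X \cong \omega_{X/k}$, as claimed.

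The two classical inputs—base-change compatibility and transitivity of relative dualizing sheaves for l.c.i.\ morphisms—are the crux of the argument, and everything else is bookkeeping with line bundles. I expect the only delicate point to be checking that base change to the generic point is legitimate, but this is immediate because $\Spec k \to \sB$ is flat (it is a localization), so no further flatness hypothesis on $\pi$ is needed.
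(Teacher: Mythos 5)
Your proof is correct and takes essentially the same route as the paper's: both identify $\omega_{X/k}$ with the restriction to $X$ of the relative dualizing sheaf of $\pi$ (base-change compatibility), relate that relative dualizing sheaf to $\omega_{\ex/\F} \otimes \pi^\ast \omega_{\sB/\F}\inv$, and conclude by noting that the pullback of $\omega_{\sB/\F}$ through the generic point of $\sB$ is trivial. The only difference is organizational: the paper adopts Hartshorne's definition $\omega_\pi := \omega_{\ex/\F} \otimes \pi^\ast \omega_{\sB/\F}\inv$, so the transitivity relation is definitional and only base change needs to be quoted, whereas you work with the intrinsic l.c.i.\ definition, verify base change directly from the regular-immersion factorization, and quote transitivity as a separate classical fact.
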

\begin{proof}
  By \cite[Def. 1.5]{har1},
  we have $\omega_\pi :=
  \omega_{\ex/\F} \otimes \pi^\ast \omega_{\sB/\F}\inv$.  As $\omega_\pi$
  commutes with arbitrary base changes,
  $$\omega_{X/k} = \omega_\pi|_X = (\omega_{\ex/\F} \otimes
  \pi^\ast \omega_{\sB/\F} \inv )|_X = \omega_{\ex/\F}|_X,$$
  with the last equality justified by $\omega_{\sB/\F}$ being locally
  trivial on $\sB$.
\end{proof}


%
%

\subsection{Foliations in
  characteristic two}\label{section-p=2}
\setcounter{equation}{0}

A degree $p$ inseparable morphism $f:Z \to X$ is not generally an
$\alpha_{\sL}$-torsor  for any
line bundle $\sL$, even when $f$ is the morphism arising from the
quotient by a foliation on a variety $Z$.  
Luckily, when $p = 2$ this
difficulty does not arise, which allows us to apply Theorem
\ref{numerical-formula-theorem} to the proof of Proposition
\ref{relating-numerical-invariants-of-X-and-Z-prop},
 the key result used in \S\ref{geometric-construction-section} 
to construct regular del Pezzo surfaces with irregularity.

\begin{proposition}[Ekedahl]\label{Ekedahl-degree-2-cover-prop}
  Let $f:Z \to X$ be a finite morphism  of degree $p = 2$ from a
  Cohen-Macaulay scheme $Z$ to a regular variety $X$.
  Let $\sL$ be the line bundle satisfying
  $$0 \to \OO_X \to f_\ast \OO_Z \to \sL\inv \to 0.$$
  Then $Z \to X$ is an $\alpha_s$ torsor for some $s \in \Gamma(X,
  \sL)$, viewed as a section $s \in \Hom(\sL, \sL^{\otimes 2})$, where
  $\alpha_s$ is the group scheme kernel of $\Frobenius_{\sL/X} - s:
  \sL \to \sL^{\otimes 2}$.
  Moreover, if $f$ is a   purely inseparable map, then $s = 0$ and hence 
  $f:Z\to X$ is an $\alpha_{\sL}$-torsor.
\end{proposition}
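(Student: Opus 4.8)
The plan is to exploit the fact that in characteristic $2$ a degree-$2$ finite flat cover is \emph{monogenic} with a single quadratic relation, and then to match that relation against the defining equation of an $\alpha_s$-torsor. The characteristic-$2$ hypothesis is essential, as it forces certain cross terms to vanish.

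First I would establish flatness and the defining sequence. Since $X$ is regular (hence Cohen--Macaulay) and $Z$ is Cohen--Macaulay with $f$ finite of constant fibre dimension $0$, miracle flatness shows $f$ is flat, so $\sA := f_\ast\OO_Z$ is a locally free $\OO_X$-algebra of rank $2$. The unit $\OO_X \to \sA$ sends $1$ to a basis vector in every fibre, so locally we may complete $1$ to a basis $\{1,\theta\}$; in particular the unit is locally a direct summand and its cokernel $\sL\inv$ is a line bundle, which gives the stated sequence. Writing the algebra relation in such a local basis yields
\begin{equation*}
  \theta^2 = a\theta + b, \qquad a,b \in \OO_X.
\end{equation*}
Next I would globalize the coefficient $a$. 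Under a change of basis $\theta' = u\theta + v$ with $u$ a unit and $v \in \OO_X$ (the only changes preserving the splitting $\OO_X \oplus \OO_X\theta$), the characteristic-$2$ identity $(u\theta+v)^2 = u^2\theta^2 + v^2$ gives a new relation with leading coefficient $a' = ua$. Since the image $\bar\theta$ of $\theta$ is a local frame of $\sL\inv$ transforming by $\bar\theta' = u\bar\theta$, the dual frame of $\sL$ transforms by $u\inv$, so the assignment $\theta \mapsto a$ patches to a global section $s \in \Gamma(X,\sL) = \Hom(\sL, \sL^{\otimes 2})$. The reason this is special to $p=2$ is precisely that the cross term $2uv\theta$ vanishes; in odd characteristic it survives and $a$ fails to transform linearly.

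Then I would identify $Z$ as an $\alpha_s$-torsor. In a local trivialization, $\alpha_s = \ker(\Frobenius_{\sL/X} - s)$ is $\Spec \OO_X[y]/(y^2 - ay)$, and I claim the translation $\theta \mapsto \theta + y$ defines an action of $\alpha_s$ on $Z$: using $y^2 = ay$, $\theta^2 = a\theta + b$, and again $2\theta y = 0$, one computes $(\theta + y)^2 = a(\theta+y) + b$, so $\theta+y$ satisfies the same quadratic. Locally $\alpha_s \times_X Z \to Z \times_X Z$, $(y,\theta)\mapsto(\theta,\theta+y)$, is an isomorphism, so the action is simply transitive on fibres; and since translation is intrinsic to the group scheme $\mathbf{W}(\sL) = \Spec_X \Sym^\bullet(\sL\inv)$, into which $Z$ embeds as the closed subscheme locally defined by $y^2 - ay - b = 0$, these local torsor structures glue. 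Hence $f:Z\to X$ is an $\alpha_s$-torsor. Finally, for the \emph{moreover} clause: if $f$ is purely inseparable, then at the generic point of the integral scheme $X$ the relation $T^2 - aT - b$ defines $k(Z)/k(X)$, and its derivative is the constant $a$, so the extension would be separable whenever $a \neq 0$. Pure inseparability therefore forces $a = 0$ in $k(X)$, and since $X$ is integral a section of the line bundle $\sL$ vanishing at the generic point is zero, so $s = 0$. Then $\alpha_s$ is exactly the Frobenius kernel $\alpha_{\sL}$ of \eqref{short-exact-seq}, and $f$ is an $\alpha_{\sL}$-torsor.

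The main obstacle I anticipate is the gluing in the third step: checking that the locally defined $\alpha_s$-actions are genuinely compatible and exhibit $Z$ globally as a torsor rather than merely a form that is locally a torsor. This rests on verifying that the local constants $b$ (which transform by $b' = u^2 b + v^2 - uav$, hence are \emph{not} a global section but affine-linear data) assemble into a well-defined class in $H^1(X,\alpha_s)$, and that the embedding $Z \hookrightarrow \mathbf{W}(\sL)$ is canonical up to the translations generated by $\alpha_s$ itself.
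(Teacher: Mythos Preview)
Your argument is correct. The paper itself does not reprove this proposition: it simply cites Ekedahl \cite[Prop.~1.11]{eke1} and remarks that the smoothness hypothesis there may be weakened to regularity, since regularity of $X$ together with Cohen--Macaulayness of $Z$ is already enough (via miracle flatness) to guarantee that $f_\ast\OO_Z$ is locally free. What you have written is essentially Ekedahl's original proof spelled out in detail, so the two approaches coincide.

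Your flagged concern about gluing is not actually a gap. The coaction $\mu:\OO_Z\to\OO_Z\otimes\OO_{\alpha_s}$ defined locally by $\theta\mapsto\theta\otimes 1+1\otimes y$ is independent of the choice of $\theta$: under $\theta'=u\theta+v$ one has $y'=uy$ (since $y$ is the restriction to $\alpha_s\subset\mathbf{W}(\sL)$ of the fibre coordinate, hence transforms as a frame of $\sL\inv$), and then $\mu(\theta')=u\mu(\theta)+v=\theta'\otimes1+1\otimes y'$. Thus the local coactions patch, and the torsor condition $Z\times_X\alpha_s\xrightarrow{\sim}Z\times_X Z$ is local on $X$ and verified by your computation $(\theta_1-\theta_2)^2=a(\theta_1-\theta_2)$. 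The transformation $b'=u^2b+v^2+uav$ shows that the collection $\{b_U\}$ is a \u Cech $1$-cocycle for $\alpha_s$ (with coefficients in the sheaf of sections of $\alpha_s$), which is exactly the class in $H^1(X,\alpha_s)$ classifying the torsor; so your instinct about how $b$ globalizes is also right.
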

\begin{proof}
  This is proven in \cite[Prop. 1.11]{eke1} for smooth $X$, although the proof
  only requires $X$ to be regular (to guarantee that $f_\ast \OO_Z$ is
  a locally free $\OO_X$-module). 
\end{proof}

We now prove the result advertised at the beginning of this section:

\begin{proof}
[Proof of Proposition \ref{relating-numerical-invariants-of-X-and-Z-prop}]
  Proposition
  \ref{dualizing-sheaf-generic-fibre-proposition} (1) proves that $X$ is
  regular, and then Theorem \ref{Ekedahl-degree-2-cover-prop}
  shows that $f:Z \to X$ indeed arises as an $\alpha_\sL$-torsor.
  Proposition
  \ref{dualizing-sheaf-generic-fibre-proposition}(3) proves
  $f^\ast\omega_{X} \cong \omega_{Z} \otimes \sF^{\otimes -1}$
  and   Proposition
  \ref{euler-characteristic-of-alpha-L-torsor-prop} gives 
  $f^\ast \sL \cong \omega_Z \otimes f^\ast \omega_X\inv$.
  It immediately follows $\sF \cong f^\ast \sL$, and 
  also $f^\ast \omega_X \cong \sF$,
  due to the hypothesis $\omega_{Z} \cong \sF^{\otimes 2}$.
  Combining these isomorphisms, we obtain $f^\ast \omega_X \cong 
  f^\ast \sL$. Since $f$ is a finite, flat surjective map of degree
  $2$, the line bundles $\omega_X$ and $\sL$ differ by a $2$-torsion
  line bundle, and hence are $\Q$-linearly equivalent.
  If $Z$ is a del Pezzo surface, then $X$ is as well because   $f$ is
  a finite, flat 
  surjective map and so $\omega_X\inv$ is
  ample if and only if $\omega_Z\inv \cong f^\ast \omega_X^{\otimes
    -2}$ is ample.
  A straight-forward application of Theorem
  \ref{numerical-formula-theorem}~(2) gives the last two claims.
\end{proof}

 \begin{corollary}\label{cor-L-is-dualizing-sheaf}
   If $Z \to X$ is the quotient of a regular del Pezzo surface $Z$ by
   a rank $1$ foliation $\sF$ over $k/\F$, a finitely generated field
   extension of a perfect field, such that $\sF^{\otimes 2} \cong
   \omega_Z$ and $h^1(X, \OO_X) = 1$, then $Z$ is an
   $\alpha_{\sL}$-torsor for $\sL \cong \omega_X$.
 \end{corollary}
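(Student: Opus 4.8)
The plan is to reduce the claim to the triviality of a single numerically trivial line bundle, and then to extract the needed global section from a Kodaira-type non-vanishing on $X$ that is forced by the inseparable cover.

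First I would apply Proposition~\ref{relating-numerical-invariants-of-X-and-Z-prop}. Its part (1) identifies $X = Z/\sF$ as a regular del Pezzo surface and presents $f\colon Z \to X$ as an $\alpha_{\sL}$-torsor, where $\sL$ is the line bundle fitting into $0 \to \OO_X \to f_\ast\OO_Z \to \sL\inv \to 0$; its part (2) shows that $\sM := \sL\inv \otimes \omega_X$ is a $2$-torsion line bundle. Because $\sL \cong \omega_X$ is the same as $\sM \cong \OO_X$, it suffices to prove that this particular $\sM$ is trivial. Since $\sM^{\otimes 2} \cong \OO_X$, the class $\sM$ is numerically trivial, and on the integral projective surface $X$ an effective numerically trivial divisor must vanish; so I only need to exhibit a single nonzero element of $H^0(X,\sM)$.

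The naive attempt---tensoring the defining sequence by $\sM$ and counting dimensions---is inconclusive: Riemann--Roch (Theorem~\ref{riemann-roch-for-l.c.i.-surfaces-theorem}) gives $\chi(\sM) = \chi(\OO_X) = 1 - q_X = 0$, so one only learns $h^0(X,\sM) = h^1(X,\sM)$ with no handle on positivity. The step that breaks this circularity is to tensor the sequence $0 \to \OO_X \to f_\ast\OO_Z \to \sL\inv \to 0$ instead by the locally free sheaf $\sL$, obtaining $0 \to \sL \to f_\ast\OO_Z \otimes \sL \to \OO_X \to 0$, and to pass to cohomology. Here $H^0(X,\sL) = 0$ because $\sL$ is numerically equivalent to the anti-ample $\omega_X$, and $H^0(X, f_\ast\OO_Z \otimes \sL) = H^0(Z, f^\ast\sL) = 0$ because $\sL\inv$ is numerically equivalent to the ample $\omega_X\inv$ (hence ample) and $f$ is finite surjective, so that $f^\ast\sL$ is anti-ample on $Z$. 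Consequently the connecting homomorphism $H^0(X,\OO_X) \hookrightarrow H^1(X,\sL)$ is injective, giving $h^1(X,\sL) \ge h^0(X,\OO_X) = 1$.

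Finally I would transport this non-vanishing back to $\sM$ through Serre duality on the Gorenstein surface $X$. Since $\omega_X \otimes \sM\inv \cong \sL$, duality yields $h^1(X,\sM) = h^1(X,\sL) \ge 1$ together with $h^2(X,\sM) = h^0(X,\sL) = 0$. Combined with $\chi(\sM) = 0$ this forces $h^0(X,\sM) = h^1(X,\sM) \ge 1$. Thus $\sM$ is effective and numerically trivial, hence trivial, and $\sL \cong \omega_X$ as desired. I expect the genuine obstacle to be precisely this positivity issue: the Euler-characteristic bookkeeping on $\sM$ alone is self-consistent and says nothing about whether $\sM$ admits a section, so the real content is the inequality $h^1(X,\sL) \neq 0$, a failure of Kodaira vanishing for the anti-ample $\sL$ supplied by the extension structure of $f_\ast\OO_Z$ coming from the inseparable cover; and it is the exact value $q_X = 1$ (rather than merely $q_X > 0$) that then upgrades $h^1(X,\sM) \ge 1$ to $h^0(X,\sM) \ge 1$.
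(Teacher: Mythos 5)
Your core argument is correct and its skeleton matches the paper's: both proofs reduce the claim to showing that the $2$-torsion bundle $\sM := \sL\inv \otimes \omega_X$ supplied by Proposition~\ref{relating-numerical-invariants-of-X-and-Z-prop} is trivial, both use Riemann--Roch together with numerical equivalence to pin down an Euler characteristic, both use Serre duality, and both hinge on the non-vanishing $h^1(X,\sL) \neq 0$ extracted from the inseparable cover. The genuine difference is the mechanism for that non-vanishing and the logical arrangement. The paper argues by contradiction: assuming $\sM$ nontrivial gives $h^0(X,\sM)=0$ (a nontrivial torsion bundle has no sections), hence $h^1(X,\sL) = h^2(X,\sL) = 0$; this contradicts the nontriviality of the torsor, because the long exact sequence of $0 \to \alpha_{\sL} \to \sL \to \sL^{\otimes 2} \to 0$ together with $H^0(X,\sL^{\otimes 2})=0$ injects $H^1(X,\alpha_{\sL})$ into $H^1(X,\sL)$. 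You argue directly: tensoring $0 \to \OO_X \to f_\ast\OO_Z \to \sL\inv \to 0$ by $\sL$ and using $H^0(X, f_\ast\OO_Z \otimes \sL) = H^0(Z, f^\ast\sL) = 0$ (projection formula plus anti-ampleness upstairs), the connecting map embeds $H^0(X,\OO_X)$ into $H^1(X,\sL)$; then $\chi(\sM)=0$ and $h^2(X,\sM)=0$ force $h^0(X,\sM) \geq 1$, and an effective numerically trivial Cartier divisor on the integral projective surface $X$ must vanish. Your route stays entirely within coherent-sheaf cohomology --- it never touches the group-scheme exact sequence, and it does not even need the torsor class to be nonzero, since non-splitness of the extension is forced by ampleness alone --- and it isolates cleanly why the exact value $q_X = 1$ matters. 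The paper's route is shorter on the page because it reuses the flat-cohomology machinery already set up in \S\ref{alpha-L-torsors-subsection}.

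One omission you should repair: Proposition~\ref{relating-numerical-invariants-of-X-and-Z-prop} is a characteristic-$2$ statement, whereas the corollary's hypotheses do not include $p = 2$ (the ambient convention of the paper is only $p \geq 2$). So before your first step you must rule out $p \geq 3$; the paper does this in one sentence by invoking Corollary~\ref{numerical-case-q=1-corollary}, i.e.\ the numerical machinery of \S\ref{numerical-bounds-section}, which for a normal l.c.i.\ del Pezzo surface with $q = 1$ and $d \geq 1$ forces $p = 2$ via $q \geq d(p^2-1)/6$. Without this, your opening claim that $f$ is an $\alpha_{\sL}$-torsor at all is unjustified: as the paper notes at the start of \S\ref{section-p=2}, the quotient by a rank-$1$ foliation in characteristic $p > 2$ need not be an $\alpha_{\sL}$-torsor for any line bundle $\sL$.
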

 \begin{proof}
   Corollary \ref{numerical-case-q=1-corollary} guarantees that $\F$
   is of characteristic $2$.
   By Proposition \ref{relating-numerical-invariants-of-X-and-Z-prop},
   $Z$ is a nontrivial $\alpha_{\sL}$-torsor for some line bundle $\sL$ that
   differs from $\omega_X$ by a $2$-torsion line bundle.  In
   particular, $\sL$ and $\omega_X$ are numerically equivalent, and
   therefore by
   the Riemann-Roch theorem,
   $\chi(\sL) = \chi(\omega_X)$.
   Serre duality implies
   $\chi(\omega_X) = \chi(\OO_X) = 0$, because $h^1(X, \OO_X) = 1$.
   The groups $H^0(X,\sL)$ and $H^0(X,
   \sL^{\otimes 2})$ are $0$ because
   $\sL\inv$ is ample.  Therefore, $h^1(X,\sL) = h^2(X, \sL)$, and by
   Serre duality, $h^2(X, \sL) = h^0(X , \sL\inv \otimes \omega_X)$.

   If we assume $ \sL\inv \otimes \omega_X$ is a nontrivial line bundle,
   then it follows that 
   $$h^1(X,\sL) = h^0(X,  \sL\inv \otimes \omega_X) = 0,$$
   because
   any global section of a nontrivial torsion line bundle on a
   projective variety is $0$.
   On the other hand, since $Z$ is a nontrivial $\alpha_{\sL}$-torsor,
   it corresponds to a nonzero class of the cohomology group
   $H^1(X,\alpha_{\sL})$.  The long exact sequence in cohomology
   attached to the short exact sequence of group schemes
   $$0 \to \alpha_{\sL} \to \sL \to \sL^{\otimes 2} \to 0,$$
   along with the vanishing
    $H^0(X, \sL^{\otimes 2}) = 0$,
   proves that there is an
   injection $H^1(X, \alpha_{\sL}) \subseteq H^1(X, \sL)$.  This
   latter group is $0$, yet must have a nonzero class that corresponds
   to the nontrivial $\alpha_{\sL}$-torsor $Z$, demonstrating the
   absurdity of our assumption.  Therefore $\sL \inv \otimes \omega_X
   \cong \OO_X.$ 
 \end{proof}

\section{The construction of regular del Pezzo surfaces with
  irregularity}\label{geometric-construction-section}  
\setcounter{equation}{0}
In this section we construct examples of  regular del Pezzo surfaces $X$
with $h^1(X,\OO_X) = 1$.
By Corollary \ref{numerical-case-q=1-corollary}, these
surfaces can only exist in characteristic $2$ and must have
anti-canonical
 degree $K_X^2 \in \{1,2\}$.  We construct such surfaces by applying 
Proposition \ref{relating-numerical-invariants-of-X-and-Z-prop}
to an explicit regular del Pezzo surface $Z$
and foliations $\sF$ satisfying $\sF^{\otimes 2} \cong \omega_Z$.
Once constructed, it follows from
Corollary~\ref{cor-L-is-dualizing-sheaf} that $Z \to X$ is an
$\alpha_{\omega_X}$-torsor. 

\subsection{Set-up}\label{set-up-Z-subsection}
\setcounter{equation}{0}
 Let $\F_2$ be any perfect field of
characteristic $2$. 
Let $ \zee \subseteq \P^3_{\F_2} \times
\A^4_{\F_2}$ be the family of quasi-linear quadrics given by
the vanishing of the form
$Q := \alpha_0X_0^2 + \alpha_1X_1^2 + \alpha_2X_2^2 + \alpha_3X_3^2$,
where the coordinates $[X_0: X_1 : X_2: X_3]$ are projective and 
 $(\alpha_0,\alpha_1, \alpha_2, \alpha_3)$ are affine.  As a
simplification,
we sometimes omit the symbol
$\F_2$ from our notation (e.g. we write $\Omega_{\P^3}$ instead of the
more cluttered $\Omega_{\P^3_{\F_2}/\F_2}$). Let $\sI_{\zee} \subseteq
\OO_{\P^3 \times \A^4}$ denote the ideal sheaf, generated by $Q$,
that defines $\zee$ as a subscheme of $\P^3_{\F_2} \times \A^4_{\F_2}$.

 The sequence
\begin{equation}\label{cotangent-sequence-for-zee-eqn}
0 \to \OO_{\zee}(-2) \stackrel {dQ} \to 
\OO_{\zee} \otimes ( \Omega^1_{\P^3} \oplus 
 \Omega^1_{\A^4}) \to \Omega^1_{ \zee/\F_2} \to 0
\end{equation}
is exact, and since $dQ = \sum X_i^2 d\alpha_i$ is nowhere vanishing,
the cokernel $\Omega_{\zee/\F_2}$ is a rank $6$ vector 
bundle on $\zee$.
Hence, $\zee$ is a smooth
 $\F_2$-variety.
Let $\zee_U$ denote
the restriction of the family $\zee$ to the open subscheme $U
\subseteq \A^4_{\F_2}$ that complements the 
$15$ hyperplanes of 
the form $\sum_{i=0}^3 \eps_i \alpha_i = 0$ for $\eps_i \in
\{0,1\}$.
Let $Z$ be the generic fibre of $\zee_U$ over $U$,
$$Z := (\sum \alpha_i X_i^2 = 0)\subseteq
\P^3_{\F_2(\alpha_0,\ldots,\alpha_3)}.$$
The adjunction formula implies
$\omega_Z \cong \OO_Z(-2)$, and hence $Z$ is a regular del Pezzo
surface with $K_Z^2 = 
8$ and, being a hypersurface in
$\P^3_{\F_2(\alpha_0,\ldots,\alpha_3)}$, with $h^1(Z,\OO_Z) = 0$.    

To satisfy the hypothesis of Proposition
\ref{relating-numerical-invariants-of-X-and-Z-prop}, we shall
construct, for specified subfields $k \subseteq
\F_2(\alpha_0,\ldots, \alpha_3)$,
rank $1$ foliations $\OO_{Z}(-1) \cong \sF \subseteq T_{Z/k}
\subseteq T_{Z/\F_2}$ 
over the extension  $k/\F_2$.
To construct such $\sF$, we find subsheaves $\sF_{\zee} \subseteq
T_{\zee/\F_2}$  that restrict to foliations on $\zee_U$ over the
perfect field $\F_2$.
We then take $\sF$ to be the restriction of this foliation to $Z$,
that is, $\sF := (\sF_{\zee})|_{Z}$.

\subsection{Example of degree one}\label{subsection-q-1-d-1} 
\setcounter{equation}{0}

Define $\Theta_\P: \OO_{\P^3}(-1) \to
T_{\P^3}$ as the composition
$$\Theta_{\P} : \OO_{\P^3}(-1)
\stackrel {\sum X_i^2 \partial_{X_i}}{\longrightarrow}
 \sum_{i=0}^3 \OO_{\P^3}(1)\partial_{X_i} \stackrel{\phi}{\longrightarrow} T_{\P^3},$$
where
$\phi$ is the morphism coming from the Euler sequence,
\begin{equation}
0 \to \OO_{\P^3_{\F_2}} \stackrel{\sum X_i
\partial_{X_i}}{\longrightarrow} \sum_{i=0}^3
 \OO_{\P^3_{\F_2}}(1)\partial_{X_i}
\stackrel{\phi}{\longrightarrow}
T_{\P^3} \to 0.\label{euler-sequence}
\end{equation}
Let ${\sF_{\zee}}$ denote the image of 
$$\Theta_\P \oplus 0: \OO_{\zee}\otimes \OO_{\P^3}(-1) 
\to \OO_{\zee} \otimes (T_{\P^3} \oplus T_{\A^4})  \cong
T_{\P^3 \times \A^4}|_{\zee}.$$ 
Notice that, as derivations in $\sF_{\zee}$ preserve the ideal sheaf
$\sI_{\zee}$, as well as kill functions coming from $\OO_{\A^4}$, the
sheaf $\sF_{\zee}$ is contained within the subsheaf $T_{\zee/\A^4}
\subseteq T_{\P^3 \times \A^4}|_{\zee}$.

We now proceed to 
demonstrate that ${\sF_{\zee}} \subseteq T_{\zee/\F_2}$ is foliation
over $\F_2$ when restricted to $\zee_U$.   
First we prove that $\Theta_\P \oplus 0$ is injective on all fibres
over $\zee_U$.  
It
suffices to prove this injectivity after composing
with the projection $T_{\P^3\times \A^4}|_{\zee} \to \OO_\zee \otimes
T_{\P^3}$. 
In view of \eqref{euler-sequence}, this composition fails to be
injective precisely over the points where $\sum X_i^2
\partial_{X_i}$ and $\sum X_i \partial_{X_i}$ fail to span a 
$2$-dimensional subspace of the fibre of 
$\sum_{i=0}^3 \OO_{\P^3_{\F_2}}(1)\partial_{X_i}$,
which exactly constitutes
the vanishing of all $2 \times 2$ minors of the matrix: 
$$
\begin{bmatrix}
  X_0^2 & X_1^2 & X_2^2 & X_3^2 \\
  X_0 & X_1 & X_2 & X_3 \\
\end{bmatrix}.
$$
Such minors are of the form $X_iX_j(X_i + X_j)$, for $i \neq j$, and
one quickly checks that they cannot simultaneously vanish on $\zee_U$.

As ${\sF_{\zee}}$ is rank $1$, it is preserved under the Lie bracket, and the
only remaining criterion ${\sF_{\zee}}$ must satisfy is closure under
$p$th powers.   It suffices to verify this condition on a local
generator of $\sF_{\zee}$.
On the chart
$(X_{i_0}\neq 0)$, the sheaf ${\sF_{\zee}}$ is generated by the
differential operator
$$\theta_{\P} := \Theta_{\P}(\frac{1}{X_{i_0}}) = \frac 1
{X_{i_0}}\sum X_i^2 \partial_{X_i}.$$ 
If $x_i := \frac{X_i}{X_{i_0}}$ are the local affine coordinates,
then 
$$\theta_{\P} = \sum_{i \neq i_0}(x_i^2 + x_i)\partial_{x_i},$$
 because
 $X_i\partial_{X_i} =
x_i \partial_{x_i}$, for $i \neq i_0$, and
$X_{i_0} \partial_{X_{i_0}} = \sum_{i \neq i_0} x_i \partial_{x_i}$,
as can be checked by evaluation on the functions $x_j =
\frac{X_j}{X_{i_0}}$.
We now expand $\theta_{\P}^2$, taking note that all higher-order operators in the
expansion are either $0$ (e.g. $\partial_{x_i}^2 = 0$) or are nonzero
(e.g. $\partial_{x_i}\partial_{x_j}$, $i \neq j$) but
occur with
even, hence $0$, coefficient:
\begin{align*}
\theta_{\P}^2
&= \sum_{i \neq i_0} (x_i^2 + x_i)\partial_{x_i} \circ \sum_{j\neq i_0} (x_j^2 + x_j)
\partial_{x_j} \\
&=  \sum_{i \neq i_0} (x_i^2 + x_i)  \sum_{j \neq i_0} \delta_{ij} \cdot \partial_{x_j}\\
& = \theta_{\P}.
\end{align*}
Thus, ${\sF_{\zee}} \subseteq T_{\zee/\A^4}$ is a foliation on
$\zee_U$, and the restriction $\sF := (\sF_{\zee})|_Z$ is therefore a
foliation over the extension
$\F_2(\alpha_0,\ldots, \alpha_3)/\F_2$.
Let $X_1 := Z/\sF$ be the resulting quotient.

\begin{theorem}\label{X1-theorem}
   The variety $X_1$ constructed above is a
  regular del Pezzo surface over the field $H^0(X_1, \OO_{X_1}) =
  \F_2(\alpha_0,\alpha_1,\alpha_2,\alpha_3)$  with 
  irregularity $h^1(X_{1}, \OO_{X_{1}}) = 1$ and degree $K_{X_{1}}^2 = 1$.
\end{theorem}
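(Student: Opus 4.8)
The plan is to apply Proposition \ref{relating-numerical-invariants-of-X-and-Z-prop} to the foliation $\sF = (\sF_\zee)|_Z$ just constructed, then use the numerical constraints of Corollary \ref{numerical-case-q=1-corollary} to pin down the remaining invariants. The essential input is that $Z$ is a regular del Pezzo surface with $K_Z^2 = 8$, $h^1(Z,\OO_Z) = 0$, and $\omega_Z \cong \OO_Z(-2)$, and that $\sF \subseteq T_{Z/k}$ is a rank $1$ foliation over $k/\F_2$, where $k := \F_2(\alpha_0,\ldots,\alpha_3)$. The key hypothesis of Proposition \ref{relating-numerical-invariants-of-X-and-Z-prop}(2) that I must verify is $\sF^{\otimes 2} \cong \omega_Z$. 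By construction, $\sF$ is the restriction to $Z$ of the image of $\Theta_\P \oplus 0$, and since $\Theta_\P$ has source $\OO_{\P^3}(-1)$ and is injective on all fibres over $\zee_U$ (as shown just above via the non-vanishing of the $2\times 2$ minors $X_iX_j(X_i+X_j)$), the sheaf $\sF$ is the line bundle $\OO_Z(-1)$. Hence $\sF^{\otimes 2} \cong \OO_Z(-2) \cong \omega_Z$, as required.

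Granting this, Proposition \ref{relating-numerical-invariants-of-X-and-Z-prop}(1) shows $X_1 = Z/\sF$ is a regular $k$-variety and $f : Z \to X_1$ is an $\alpha_\sL$-torsor for some line bundle $\sL$; part (2) then guarantees $X_1$ is a regular del Pezzo surface and supplies the two numerical identities
\begin{equation*}
[k_Z:k_X]\cdot \chi(\OO_Z) = 2\chi(\OO_X) + d_X, \qquad K_X^2 = \frac{[k_Z:k_X]\cdot K_Z^2}{8}.
\end{equation*}
Since $h^1(Z,\OO_Z) = 0$ and $H^2(Z,\OO_Z) = 0$ by Serre duality on the del Pezzo surface $Z$, we have $\chi(\OO_Z) = 1$. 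To finish I need the extension degree $[k_Z:k_X]$; here I would argue that $Z$ is geometrically connected and in fact that $k_X = H^0(X_1,\OO_{X_1})$ already equals $\F_2(\alpha_0,\ldots,\alpha_3) = k(U)$, so that $e = 0$ and $[k_Z:k_X] = 1$. With $[k_Z:k_X] = 1$, the second identity gives $K_{X_1}^2 = K_Z^2/8 = 1$, and the first gives $1 = 2\chi(\OO_{X_1}) + 1$, whence $\chi(\OO_{X_1}) = 0$, i.e.\ $h^1(X_1,\OO_{X_1}) = 1$ (using $h^0 = 1$ and $h^2 = 0$ by Serre duality). This simultaneously recovers the degree-one, irregularity-one case predicted by Corollary \ref{numerical-case-q=1-corollary} with $e = 0$.

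The main obstacle I anticipate is pinning down $[k_Z:k_X]$, equivalently showing $e = 0$: Lemma \ref{global-sections-of-Z-lemma} only tells us $[k_Z:k_X] \in \{1,2\}$, and one must exclude $e = 1$ to land on degree $K_{X_1}^2 = 1$ rather than $K_{X_1}^2 = 2$. I expect this to follow from a direct analysis of $H^0$ of the quotient, or from the fact that the global derivations in $\sF$ act trivially on the base coordinates $\alpha_i$ so that no field extension is introduced on passing to invariants; concretely, $f_\ast\OO_{X_1} = \OO_Z^{\sF}$ and the constants $H^0(X_1,\OO_{X_1})$ should be exactly the $\sF$-invariants of $H^0(Z,\OO_Z) = k$, which is all of $k$ since $\sF \subseteq T_{Z/k}$ kills $k$. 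This yields $e = 0$ directly. An alternative safeguard is that once regularity, the del Pezzo property, and $q = h^1(X_1,\OO_{X_1}) = 1$ are established, Corollary \ref{numerical-case-q=1-corollary} forces $K_{X_1}^2 = 2^e$ with the $e = 0$ case corresponding precisely to degree one; matching this against the degree computation $K_{X_1}^2 = 2^e \cdot K_Z^2 / 8 = 2^e$ closes the argument.
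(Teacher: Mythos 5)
Your proposal is correct and follows essentially the same route as the paper: apply Proposition \ref{relating-numerical-invariants-of-X-and-Z-prop} to the foliation $\sF \cong \OO_Z(-1)$ (so $\sF^{\otimes 2} \cong \OO_Z(-2) \cong \omega_Z$), pin down $[k_Z:k_{X_1}] = 1$ by observing that $X_1$ is an $\F_2(\alpha_0,\ldots,\alpha_3)$-variety whose global functions are sandwiched between $k$ and $H^0(Z,\OO_Z) = k$, and read off $K_{X_1}^2 = 1$ and $\chi(\OO_{X_1}) = 0$ from the two numerical identities. Your "invariants of the foliation" justification for $e = 0$ is exactly the paper's argument (note only that your closing "alternative safeguard" via Corollary \ref{numerical-case-q=1-corollary} would not by itself distinguish $e=0$ from $e=1$, so the invariants argument is the one that matters).
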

\begin{proof}
 The variety $Z$ defined above is a regular del Pezzo surface with
$d_Z = 8$, $\chi(\OO_Z) = 1$.  Because 
$H^0(Z,\OO_Z) = \F_2(\alpha_0,\ldots,\alpha_3)$ and $X_1$ is an
 $\F_2(\alpha_0,\ldots, \alpha_3)$-variety, 
 $H^0(X_1,\OO_{X_1}) = \F_2(\alpha_0,\ldots,\alpha_3)$ as well.  
 Proposition \ref{relating-numerical-invariants-of-X-and-Z-prop}
 therefore applies with $[k_Z:k_{X_1}] = 1$, proving the theorem.
\end{proof}

\begin{remark}\label{remark-field-of-definition-X1}
  Actually, there exists a regular del Pezzo surface $X_1'$ of degree
  and irregularity $1$ defined over the subfield
  $\F_2(\alpha_0, \alpha_1,\alpha_2) \subseteq 
  \F_2(\alpha_0, \alpha_1,\alpha_2, \alpha_3)$. 
  Indeed, the closed subscheme $\zee_{U \cap \A^3} \subseteq \zee_U$
  sitting over the inclusion $U \cap \A^3 \subseteq U$
  given by $\alpha_3 = 1$ is smooth. The foliation
  $\sF_{\zee}|_{\zee_U}$ 
  restricts to a foliation on $\zee_{U \cap \A^3}$, and
  the quotient of the generic fibre by this foliation
  is the desired surface $X_1'$.
  Any subvariety $B  \subseteq U$ of dimension strictly less than
  $3$ gives rise to 
  a singular closed subscheme $\zee_{U \cap B} \subseteq \zee_U$, and
here our method breaks down.
\end{remark}

\subsection{Example of degree two}\label{subsection-q-1-d-2}
\setcounter{equation}{0}

Let $\zee_U \to U$ be the family defined in \S\ref{set-up-Z-subsection}.
We again choose ${\sF_{\zee}} \cong \OO_{\zee}(-1)$, but this time to
be the subsheaf of 
$T_{\zee/k}$
defined by the image of 
$$\Theta_\P \oplus \Theta_\A: \OO_{\zee}(-1) 
\to \OO_{\zee} \otimes (T_{\P^3} \oplus T_{\A^4})  \cong
T_{\P^3 \times \A^4}|_{\zee},$$
for $\Theta_\P = \sum X_i^2
\partial_{X_i}$ as before, and $\Theta_\A := (\sum 
X_j)\sum_k \alpha_k \partial_{\alpha_k}$.  
Again, we work locally on the chart $(X_{i_0} \neq 0)$, with
affine coordinates $x_i := \frac{X_i}{X_{i_0}}$.
A local generator of ${\sF_{\zee}}$ is given by $\theta = \theta_\P +
\theta_{\A}$, for $\theta_\P = 
\sum_{i \neq i_0}(x_i + x_i^2)\partial_{x_i}$ and $\theta_\A = (1 +
\sum_{i \neq i_0} x_i)\sum \alpha_j \partial_{\alpha_j}$.
We saw above that the image of $\Theta_{\P}$ preserves the ideal sheaf
$\sI_{\zee}$, and therefore is contained within $T_{\zee/\F_2}$.
We now check that the image of $\Theta_{\A}$ does as well.
On the chart $(X_{i_0} \neq 0)$, the ideal $\sI_{\zee}$ is generated
by 
$$q := \frac{1}{X^2_{i_0}}\cdot Q = \alpha_{i_0} + \sum_{i \neq 
  i_0} \alpha_i x_i^2. $$
As $\theta_{\A}(q) = (1 + \sum_{i \neq i_0} x_i)q = 0$,
the image of $\Theta_{\A}$ preserves the ideal sheaf, and therefore
the image of $\Theta_{\P}+\Theta_{\A}$ is contained in
$T_{\zee/\F_2}$, that is, $\sF_{\zee} \subseteq T_{\zee/\F_2}$.

We next begin to show that the subsheaf $\sF_{\zee} \subseteq
T_{\zee/\F_2}$ is a foliation over $\F_2$ on $\zee$.  
In the previous subsection, we showed that $\Theta_{\P}$ is injective on
fibres over $\zee_U$, and it immediately follows that the same is true
of the sum $\Theta_{\P} \oplus \Theta_{\A}$.
Hence ${\sF_{\zee}}$ is a subbundle of $T_{\zee/\F_2}$.
The Lie bracket preserves $\sF_{\zee}$ simply because $\sF_{\zee}$ is
rank $1$,
and as before, our final verification is whether ${\sF_{\zee}}$
is closed under squaring.  The following local calculation
shows just that:  
\begin{align*}
\theta^2 & = (\theta_\P + \theta_\A)^2 \\
& = \theta_\P^2 + \theta_\P \circ \theta_\A + \theta_\A \circ
\theta_\P + \theta_\A^2\\
& = \theta_\P + (\sum_{i \neq i_0}x_i + x_i^2)(\sum \alpha_j
\partial_{\alpha_j}) + 0 + (1 + \sum_{i \neq i_0} x_i)^2(\sum \alpha_j
\partial_{\alpha_j})\\
& = \theta_\P + \theta_\A = \theta.
\end{align*}
This proves that ${\sF_{\zee}}$ is a foliation on $\zee_U$ over $\F_2$.
Let $\sF :=(\sF_{\zee})|_Z$ be the restriction of this foliation to
$Z$.
If
$k := \F_2(\alpha_i\alpha_j: 0 \leq i,j \leq 3) \subseteq
\F_2(\alpha_0,\ldots, \alpha_3)$, then $\sF
\subseteq T_{Z/k}$, since the image of both $\Theta_{\P}$ and
$\Theta_{\A}$ kills all elements of $k$. 
Let $X_2 := Z/\sF$ be the resulting quotient $k$-variety. 

\begin{theorem}\label{X2-theorem}
  The variety $X_2$ constructed above is a
  regular del Pezzo surface over the field
  $H^0(X_2, \OO_{X_2}) = \F_2(\alpha_i\alpha_j: 0 \leq i,j \leq 3)
  \subseteq \F_2(\alpha_0,\ldots, \alpha_3)$  
  with irregularity $h^1(X_{2}, \OO_{X_{2}}) = 1$ 
  and degree $K_{X_{2}}^2 = 2$.
\end{theorem}
\begin{proof}
  We reiterate that $Z$ is a regular del Pezzo surface with $K_Z^2 =
  8, \chi(\OO_Z) = 1,$ and 
  $H^0(Z, \OO_Z) = \F_2(\alpha_0,\ldots,\alpha_3)$. 
  The variety $X_2$ is defined over the field
  $k = \F_2(\alpha_i\alpha_j : 0 \leq i , j \leq 3)$, and therefore
  $k \subseteq H^0(X_2,\OO_{X_2}) \subseteq H^0(Z, \OO_Z)$.  The
  foliation $\sF$ does not kill all of $H^0(Z, \OO_Z)$, since
  $\theta(\alpha_0) = \alpha_0(1 + \sum_{i \neq i_0}x_i) \neq
  0$. Hence, $\alpha_0$ is not contained in $H^0(X_2, \OO_{X_2})$, which is
  therefore a proper subfield of $H^0(Z,\OO_Z)$ containing $k$.  
  As $k$ is of index $2$ in $H^0(Z, \OO_Z)$, the fields
  $H^0(X_2, \OO_{X_2})$ and $k$ must
  coincide.   We conclude by 
  applying
  Proposition \ref{relating-numerical-invariants-of-X-and-Z-prop} with
  $[k_Z:k_{X_2}] = 2$.
\end{proof}

\subsection{Geometric reducedness}
\label{geometric-reducedness-section}
\setcounter{equation}{0}
We conclude this section by proving that, of our examples constructed
above,
the surface of degree $1$  is
geometrically reduced while the surface of degree $2$ is
geometrically non-reduced.   

\begin{proposition}\label{geometric-reductivity-proposition}
   The regular del Pezzo surface $X_1$ is geometrically reduced, but
   the regular del Pezzo surface $X_2$ is geometrically non-reduced.
\end{proposition}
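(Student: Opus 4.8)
The plan is to analyze geometric reducedness by base-changing each surface $X_i$ to the algebraic closure of its field of definition and detecting nilpotents in the structure sheaf. Since $X_i = Z/\sF_i$ is constructed as a foliation quotient, the most direct approach is to work on the level of the defining rings: in an affine chart, $\OO_{X_i/\sF_i}$ is the subring of $\OO_Z$ annihilated by the local generator $\theta$ of $\sF_i$, and I would study how this subring behaves under the base change $k_{X_i} \hookrightarrow \overline{k_{X_i}}$. The key distinction is the field of definition: for $X_1$ the base field is the full field $\F_2(\alpha_0,\ldots,\alpha_3)$, whose algebraic closure adjoins square roots (and higher $p$-power roots) of the $\alpha_i$, whereas for $X_2$ the base field is the index-$2$ subfield $k = \F_2(\alpha_i\alpha_j)$, and crucially the element $\alpha_0$ itself lies in $\overline{k}$ but \emph{not} in $k$.

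The heart of the argument lies in this arithmetic difference. First I would establish the general principle that a variety over a field $k$ is geometrically reduced if and only if it remains reduced after base change to $k^{1/p^\infty}$ (the perfect closure), since in characteristic $p$ geometric non-reducedness is a purely inseparable phenomenon. For $X_2$, I would exhibit an explicit nilpotent: the surface is defined over $k = \F_2(\alpha_i\alpha_j)$, and upon base change to $\overline{k}$ one can adjoin $\sqrt{\alpha_i\alpha_j} = \sqrt{\alpha_i}\sqrt{\alpha_j}$, so that ratios such as $\alpha_0/\alpha_1 = (\alpha_0\alpha_1)/\alpha_1^2$ acquire square roots in $\overline{k}$. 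I expect that in a suitable local chart the defining equation of $\bar X_2$ acquires a perfect square or otherwise fails the Jacobian/reducedness criterion, producing a nontrivial nilpotent. This matches the Main Theorem's assertion that $X_2$ is a $k$-form of a geometrically \emph{non-reduced} del Pezzo scheme, consistent with Reid's classification.

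For $X_1$, defined over the full field $\F_2(\alpha_0,\ldots,\alpha_3)$, I would instead verify geometric reducedness directly, most cleanly by invoking Main Theorem~\eqref{item-main-theorem-construction} itself, which asserts that $X_1$ is geometrically integral (and integral schemes are in particular reduced). Alternatively, and more self-containedly, I would argue that the generic fibre construction places $\bar X_1$ among Reid's non-normal but geometrically \emph{reduced} del Pezzo surfaces: since $\bar X_1$ is the quotient of a geometrically integral $Z$ by a foliation whose defining data involve only the $\alpha_i$ already present in the base field, no new $p$-power roots are introduced that could create nilpotents, so reducedness is preserved under base change.

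The main obstacle I anticipate is making the nilpotent for $\bar X_2$ fully explicit and rigorously verifying it is nonzero in $\OO_{\bar X_2}$ rather than merely in $\OO_{\bar Z}$; because $\OO_{X_2}$ is defined as a ring of $\theta$-constants inside $\OO_Z$, one must carefully track which elements descend to the quotient and confirm that the purported nilpotent survives the passage to the subring of invariants. Here the precise interaction between the foliation $\sF_2 = \langle \theta_\P + \theta_\A\rangle$ (which, unlike $\sF_1$, acts nontrivially on the $\alpha$-directions) and the index-$2$ field extension is delicate, and getting the local coordinate description of $\OO_{X_2/\sF_2} \otimes_k \overline{k}$ correct will require the most care.
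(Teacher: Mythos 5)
Your proposal has two genuine gaps, one for each surface, and the one for $X_1$ stems from a false premise. You assert that $Z$ is geometrically integral and that therefore ``no new $p$-power roots are introduced that could create nilpotents.'' In fact $Z = (\sum \alpha_i X_i^2 = 0)$ is a regular form of a \emph{double plane}: over $\bar k_1$ its coordinate ring on the relevant chart is $\bar k_1[x_1,x_2,x_3]/\ell^2$ with $\ell = \sqrt{\alpha_0} + \sum_{i\geq 1}\sqrt{\alpha_i}\,x_i$, which is visibly non-reduced. So the geometric reducedness of $X_1 = Z/\sF_1$ is precisely the delicate claim that the subring of $\theta_\P$-constants inside a \emph{non-reduced} ring contains no nilpotents; it cannot follow from any general ``reducedness is preserved'' heuristic, since the ambient ring itself fails to be reduced. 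Your fallback of citing Main Theorem~(1) is circular: that theorem is the summary of what the paper proves, and this proposition is part of its proof. The paper's actual argument is an explicit divisibility computation: if $f$ lifts a nilpotent of $R = \ker\theta_\P$, then $\ell^2 \mid f^n$ forces $f = \ell g$ by unique factorization; the Leibnitz rule gives $\ell \mid \theta_\P(\ell)\cdot g$; a direct evaluation (sending $x_1,x_2 \mapsto 0$, $x_3 \mapsto \sqrt{\alpha_0/\alpha_3}$) shows $\ell \nmid \theta_\P(\ell)$, hence $\ell \mid g$, $\ell^2 \mid f$, and $f = 0$ in $R$. Nothing in your outline supplies a substitute for this step.

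For $X_2$ your strategy --- exhibit an explicit nilpotent in $\OO_{X_2} \otimes_{k_2} \bar k_2$ --- is workable in principle, but you correctly identify its crux (verifying the element lies in the ring of $\theta$-constants and survives as a nonzero nilpotent there) and then leave it unresolved, so the proof is incomplete exactly where it is hardest. The paper sidesteps all of this with a short length count at the generic point: since $[k_2':k_2] = 2$ and $Z$ is geometrically a first-order neighborhood of a plane, the Artinian ring $k_2'(Z) \otimes_{k_2} \bar k_2$ has length $4$; but if $X_2$ were geometrically reduced, $k_2(X_2) \otimes_{k_2} \bar k_2$ would be a field, over which $k_2'(Z) \otimes_{k_2} \bar k_2$ is a $2$-dimensional vector space, hence of length at most $2$ --- a contradiction. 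Note that this argument exploits, rather than avoids, the geometric non-reducedness of $Z$, which is the same fact your $X_1$ argument overlooked.
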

\begin{proof}
  Let $k_i := H^0(X_i, \OO_{X_i})$ denote the field of global function on
  $X_i$, for $i \in \{1,2\}$.
  We will begin with the case of $i = 1$, and we will use the notation
  established in \S\ref{subsection-q-1-d-1}.  Since $X_1$ is
  Cohen-Macaulay, it is geometrically reduced if and only if it is so
  generically, and thus suffices to prove $X_1$ is geometrically reduced 
  on the affine chart over which
  $$\OO_{Z_{\bar k_1}} = \bar k_1[x_1, x_2, x_3]/ \ell^2, \hspace{1em}
  \textrm{with }\ell:=
  \sqrt{\alpha_0}  +  \sqrt{\alpha_1}\cdot x_1  + \sqrt{\alpha_2}\cdot x_2 +
  \sqrt{\alpha_3}\cdot  x_3.$$
  The ring $R:=
  \OO_{(X_1)_{\bar k_1}}$ is the subring of $\OO_{Z_{\bar k_1}}$ on which the
  differential $\theta_\P := \sum_{i \neq 0} (x_i +
  x_i^2)\partial_{x_i}$
  vanishes.

  For the purpose of proving that $R$ is reduced, assume $f \in \bar
  k_1 [x_1,x_2,x_3]$ lifts a nilpotent element of $R$.  This
  means that, in the
  ring $\bar k_1[x_1,x_2,x_3]$, the polynomial $\theta_{\P}(f)$ is divisible
  by  $\ell$,  and 
  secondly, for some $n >0$, the 
  quadratic form $\ell^2$ divides $f^n$,
  which by unique factorization implies that $f = \ell \cdot
  g$ for some polynomial $g$.  Consequently,  $\ell$ divides the
  product $\theta_{\P}(\ell) \cdot g$ due to the 
  Leibnitz rule: 
  $$ \theta_{\P}(f) = \theta_{\P}(\ell) \cdot g + \ell \cdot \theta_{\P}(g).$$ 
  We can compute $\theta_\P(\ell)$ explicitly as
  \begin{align*}
    \theta_{\P}(\ell) & = \sum_{i=1}^3 (x_i + x_i^2) \frac {\partial f}{\partial x_i}\\
    & = \ell + (\sqrt{\alpha_0} + \sum_{i=1}^3 \sqrt{\alpha_i}\cdot x_i^2).
  \end{align*}
  Consider the morphism $\bar k_1[x_1,x_2,x_3]/\ell \surject \bar k_1$
  defined by 
  $x_1, x_2 \mapsto 0$, and $x_3 \mapsto \sqrt{\alpha_0/\alpha_3}$.
  This morphism sends $\theta_{\P}(\ell) \mapsto \sqrt{\alpha_0} +
  {\alpha_0}/{\sqrt{\alpha_3}} \neq 0,$ and so $\ell$ does not divide
  $\theta_{\P}(\ell)$.  Therefore, $\ell$ must divide $g$, and hence
  $\ell^2$ divides $f$, which implies the image of $f$ in $R$ was $0$
  to begin with.  Thus $R$ is
  reduced.

  Now, consider $f: Z \to X_2$, as in \S\ref{subsection-q-1-d-2}.
  Let $k_2 := H^0(X_2, \OO_{X_2})$ and $k_2' := H^0(Z, \OO_{Z})$. 
  As the degree of the field extension is  $[k'_2:k_2] = 2$,
  and $Z$ is geometrically a first-order neighborhood of a plane,
  the variety $Z \times_{k_2} \bar k_2$ has generic point $\bar \xi_Z$
  whose local ring $k_2'(Z) \otimes_{k_2} \bar k_2$  is Artinian
  of length $4$.  If $X_2$ were geometrically reduced, then $k_2(X_2)
  \otimes_{k_2} \bar k_2$
  would be a field, and $ k_2'(Z) \otimes_{k_2} \bar k_2$ a  $2$-dimensional
  vector space over this field, with length at most
  $2$, yielding a contradiction.
\end{proof}

\section{A geometric description of the surface of degree one}\label{local-chart-section} 
\setcounter{equation}{0}
In this section we study, through explicit computation, the regular
del Pezzo surface $X_1$ over the field
$\F_2(\alpha_0,\alpha_1,\alpha_2,\alpha_3)$ constructed in
\S\ref{subsection-q-1-d-1}.
Although Remark~\ref{remark-field-of-definition-X1} asserts that there
exists an analogous regular del Pezzo surface $X_1'$ defined over the
subfield $\F_2(\alpha_0,\alpha_1,\alpha_2)$, 
for the sake of symmetry in our calculations, we will restrict our
attention to the surface $X_1$, which for convenience we will henceforth
denote by $X$.

The surface
$X$ is geometrically integral and is of anti-canonical degree and irregularity one:
$K_X^2 = 1$, $h^1(X,\OO_X) = 1$. By Reid's classification of non-normal del
Pezzo surfaces \cite{rei1}, the normalization of the geometric base change $X_{\bar k}$ 
is isomorphic to the projective plane, $X_{\bar k}^\nu \cong
\P^2_{\bar k}$, and the normalization morphism consists of the
collapse of a double line onto a
cuspidal curve $C \subseteq X_{\bar k}$ of arithmetic genus $h^1(C,
\OO_C) = 1$.  
The upshot of our calculations is a concrete realization of this
description of $X_{\bar k}$ in terms of our construction of $X$ as the
quotient by a foliation:

\begin{proposition}\label{prop-explicit-description-of-reid}
  Let $k := \F_2(\alpha_0,\alpha_1,\alpha_2,\alpha_3)$ and $Z \to X$
  denote the quotient morphism from the regular variety 
  $Z := (\sum \alpha_i X_i^2 = 0) \subseteq \P^3_{k}$,
  defined by the foliation
  described in  \S\ref{subsection-q-1-d-1}.
\begin{enumerate}
\item  The reduced scheme
  $Z^\red_{\bar k}$ is the hyperplane  \mbox{$(\sum \sqrt{\alpha_i}X_i
    = 0) \subseteq \P^3_{\bar k},$} and  the induced morphism
  $Z^{\red}_{\bar k } \to X_{\bar k}$ is the normalization of the
  variety $X_{\bar k}$.
\item  The singular locus of $X_{\bar k}$ is a
  rational cuspidal curve $C$ of arithmetic genus one.
\item
  The inverse image of $C$ in
  $Z^{\red}_{\bar k}$ is the double line $D$
   described by 
   the equation
   $$(\sum \sqrt[4]{\alpha_i}X_i)^2 = 0.$$
\item  The cusp of $C$ sits below the unique
  point on $D$ satisfying 
  the additional equation
  $$\sum \sqrt[8]{\alpha_i }X_i = 0.$$
\end{enumerate}
\end{proposition}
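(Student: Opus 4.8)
The plan is to run every computation over $\bar k$ on the affine chart $(X_0 \neq 0)$, writing $x_i := X_i/X_0$, where the foliation is generated by $\theta := \sum_{i=1}^{3}(x_i + x_i^2)\partial_{x_i}$ as in \S\ref{subsection-q-1-d-1}. Since we are in characteristic $2$ the defining form is a perfect square, so by the computation in the proof of Proposition~\ref{geometric-reductivity-proposition} we have $\OO_{Z_{\bar k}} = \bar k[x_1,x_2,x_3]/(\ell^2)$ and $\OO_{X_{\bar k}} = R := \ker(\theta)$, where I set
\[
\ell := \sqrt{\alpha_0} + \sum_{i=1}^3 \sqrt{\alpha_i}\,x_i,\qquad
m := \sqrt[4]{\alpha_0} + \sum_{i=1}^3 \sqrt[4]{\alpha_i}\,x_i,\qquad
n := \sqrt[8]{\alpha_0} + \sum_{i=1}^3 \sqrt[8]{\alpha_i}\,x_i.
\]
That same computation gives $\theta(\ell) = \ell + m^2$, and running it one level down yields $\theta(m) = m + n^2$; these two relations, together with $\ell,m,n$, drive everything. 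Immediately $Z^{\red}_{\bar k} = (\ell = 0) \cong \P^2_{\bar k}$, which is the first half of (1). The induced map $\nu\colon Z^{\red}_{\bar k} \to X_{\bar k}$ is finite (a closed immersion followed by the finite $f$), and I would prove it birational — hence the normalization, $Z^{\red}_{\bar k}$ being smooth and $X_{\bar k}$ integral — by a length count at the generic point $\eta$ of $X_{\bar k}$: the fibre of $f_{\bar k}$ over $\eta$ is Artinian of length $\deg f = 2$ and carries the two-step filtration by the nilradical $(\ell)$, whose graded pieces are both $\cong \bar k(Z^{\red}_{\bar k})$ (via multiplication by $\ell$), forcing $[\bar k(Z^{\red}_{\bar k}):\bar k(X_{\bar k})] = 1$. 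This proves (1).

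To locate the non-normal locus I would compute the conductor of $R$ inside $\OO_{Z^{\red}_{\bar k}}$. Eliminating $x_1$ via $\ell$, write $\OO_{Z_{\bar k}} = \bar k[\ell,x_2,x_3]/(\ell^2)$, let $\theta_0 := (x_2+x_2^2)\partial_{x_2} + (x_3+x_3^2)\partial_{x_3}$ be the horizontal part of $\theta$ on $\OO_{Z^{\red}_{\bar k}} = \bar k[x_2,x_3]$, and let $\bar m,\bar n$ denote the restrictions of $m,n$ to $(\ell=0)$. Writing a general element as $u + \ell v$ and using $\theta(\ell) = \ell + \bar m^2$, invariance unwinds to the two conditions $\theta_0(u) = \bar m^2 v$ and $\theta_0(v) = v$. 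The crucial simplification is that $\theta_0^2 = \theta_0$ (the square-closure computation of \S\ref{subsection-q-1-d-1}): $\theta_0$ is a projection onto its image, so $\theta_0(u)$ always lies in that image, and whenever $\bar m^2 \mid \theta_0(u)$ the quotient $v := \theta_0(u)/\bar m^2$ automatically satisfies $\theta_0(v)=v$. Hence $R$ embeds in $\bar k[x_2,x_3]$ as $R' = \{\,u : \bar m^2 \mid \theta_0(u)\,\}$. From $\theta_0(\bar m^2 h) = \bar m^2\theta_0(h)$ one gets $\bar m^2\,\bar k[x_2,x_3] \subseteq R'$, so $(\bar m^2)$ sits in the conductor; for the reverse inclusion I would feed the functions $x_2,x_3$ into the conductor condition, obtaining $\bar m^2 \mid u_0\,x_j(1+x_j)$, and use that $\bar m$ genuinely involves $x_3$ — which holds precisely because the open set $U$ forbids $\alpha_1 + \alpha_3 = 0$ — so $\bar m$ is coprime to $x_2(1+x_2)$ and therefore $\bar m^2 \mid u_0$. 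This identifies the conductor with $(\bar m^2)$, whose zero scheme in $\P^2$ is the double line $(\bar m^2 = 0)$, i.e. the double structure on $(\sum \sqrt[4]{\alpha_i}X_i = 0)$, proving (3).

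Part (2) then follows by combining Reid's classification \cite{rei1}, which already forces the curve of singularities to be rational and cuspidal, with the genus count: the irregularity equals $h^1(C,\OO_C)$, and $h^1(X_{\bar k},\OO) = h^1(X_1,\OO_{X_1}) = 1$ by flat base change, so $p_a(C)=1$ and $C$ has a single cusp of $\delta$-invariant $1$. To pin its location, and so prove (4), I would restrict to the normalization $D^{\red} = (\bar m = 0) \cong \P^1$ and note $\OO_C$ is the image of $R'$ in $\bar k[x_2,x_3]/(\bar m)$. Combining $\theta(\ell)=\ell+m^2$ and $\theta(m)=m+n^2$ gives $\theta_0(\bar m) = \bar m + \bar n^2 + b\,\bar m^2$ for a nonzero scalar $b$, hence $\theta_0(\bar m) \equiv \bar n^2 \pmod{\bar m}$: the field $\theta_0$ is transverse to $D^{\red}$ off $(\bar n = 0)$ and tangent there. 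Using $R' = \{u : \bar m^2 \mid \theta_0(u)\}$, a function on $D^{\red}$ extends to an invariant exactly when the obstruction — which is divisible by $\bar n^2|_{D^{\red}}$ — can be solved away, which is possible wherever $\bar n^2$ is a unit. Thus $C$ is smooth off $\nu(\bar m = \bar n = 0)$ and singular at that one point, so the unique cusp sits below $(\sum \sqrt[4]{\alpha_i}X_i = \sum \sqrt[8]{\alpha_i}X_i = 0)$, which is (4).

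The main obstacle is the local analysis at the cusp. Showing $(\bar m^2)$ is exactly the conductor is bookkeeping once the open conditions defining $U$ are used, but establishing that the invariants restrict at the distinguished point to a ring with $\delta$-invariant \emph{exactly} one, and a single branch, is more delicate: one must see that the lifting obstruction is governed precisely by the double zero of $\bar n^2|_{D^{\red}}$, and rule out that $\nu$ identifies two distinct points of $D^{\red}$ (which would produce a node rather than a cusp). The cleanest route is to borrow from Reid's classification the qualitative fact that $C$ is unibranch and cuspidal of arithmetic genus one, so that the explicit computation only has to exhibit the single singular point and confirm that $\ell, m, n$ stay in general position on $U$, making $(\bar m = \bar n = 0)$ a reduced point of $D^{\red}$.
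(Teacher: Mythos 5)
Your proposal is essentially correct, and it takes a genuinely different route from the paper's. The paper proves (1) by factoring the relative Frobenius through $X$ (Proposition~\ref{proposition-factorization}: degrees $8 = 2\cdot 4$, compared against the explicit degree-$4$ map $[X_i]\mapsto[X_i^2]$ between planes), then computes a full presentation of $\OO_X|_{(X_0\neq 0)}$ with seven relations (Proposition~\ref{affine-coordinate-ring-proposition}), applies the Jacobian criterion to locate the singular locus (Proposition~\ref{proposition-set-theoretic-description}), and finally extracts $C$ by explicit elimination, recognizing $\OO_C\subseteq \bar k[\sqrt u]$ and locating the cusp via Cramer's rule (Proposition~\ref{proposition-scheme-theoretic-C}). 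You instead prove (1) by a length count on the nilradical filtration of the generic fibre; you describe the invariants intrinsically as $R' = \{u : \bar m^2 \mid \theta_0(u)\}$ (the key identities $\theta_0^2=\theta_0$, $\theta_0(\bar m^2)=0$, $\theta(\ell)=\ell+m^2$, $\theta(m)=m+n^2$, and $\theta_0(\bar m)=\bar m+\bar n^2+\alpha_1^{-1/4}\bar m^2$ all check out); and you outsource the qualitative content of (2), and unibranchness in (4), to Reid's classification plus the flat-base-change genus count. Both routes are valid. The paper's is self-contained and re-derives Reid's qualitative picture by hand in this example, at the cost of heavy computation; yours is shorter and more conceptual (conductor, transversality of the foliation to the line $\bar m=0$), at the cost of using \cite{rei1} as a black box --- legitimate, since it is prior work the paper itself cites, though it gives up the purely computational verification that is the stated aim of \S\ref{local-chart-section}.

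One step needs repair. For (3) you compute the conductor $\mathfrak{c} = (\bar m^2)$ and declare its zero scheme to be $\nu^{-1}(C)$. But the scheme-theoretic preimage of the \emph{reduced} curve $C$ is cut out by $I_C\cdot\OO_{Z^{\red}_{\bar k}}$ with $I_C = R'\cap(\bar m)$, and in general this differs from the conductor scheme: for $k[t^2,t^5]\subseteq k[t]$ the preimage of the (reduced) singular point is $V(t^2)$ while the conductor is $(t^4)$. So conductor $=(\bar m^2)$ alone only gives $\nu^{-1}(C)\subseteq D$; a priori the preimage could be the reduced line. The fix is two lines using a relation you already derive for (4): if $u=\bar m w\in R'$, then $\bar m^2 \mid \theta_0(\bar m)w+\bar m\,\theta_0(w)$, and reducing modulo $\bar m$ using $\theta_0(\bar m)\equiv\bar n^2$ and the coprimality of $\bar m,\bar n$ forces $\bar m\mid w$; hence $R'\cap(\bar m)=(\bar m^2)$ and $\nu^{-1}(C)=D$ exactly. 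With that inserted --- together with the routine remarks that $\bar n^2$ is itself an invariant (so your localization in the lifting argument for (4) takes place over an affine open of $X_{\bar k}$) and that the chart computations propagate by the usual symmetry --- your argument is complete.
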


\noindent This is proven in stages throughout the following subsections.

\subsection{Normalization of geometric base change}
\setcounter{equation}{0}

We recall the notation established in \S\ref{subsection-q-1-d-1}.
The variety $Z := (\sum_{i=0}^3 \alpha_i X_i^2 = 0) \subseteq
\P^3_{k}$ is a regular del Pezzo surface over the field $k :=
\F_2(\alpha_0, \alpha_1, 
\alpha_2, \alpha_3)$, and $\sF = \im(\Theta_{\P}) \subseteq T_{Z/k}$ is the
foliation on $Z$ 
over the extension $k/\F_2$ defined by $\Theta_{\P} := \sum_{i=0}^3 X_i^2
\partial_{X_i}$.  Recall that $X$ was defined as the quotient $X =
Z/\sF$ and  as before $f: Z \to X$ will denote the quotient morphism.

\begin{proposition}\label{proposition-factorization}
  The relative
  Frobenius morphism $\Frobenius_{Z/k}$ factors as
\begin{equation}\label{eqn-factoring-frobenius}
  \xymatrix{
    &
    Z \ar[d]_f \ar[ddl]_{\bar \Frobenius_{Z/k}} \ar@/^1em/[dd]^{\Frobenius_{Z/k}}\\
    & X \ar[d]_g \ar[dl]_{\bar g}\\
    (Z \times_{k,\Frobenius_k} k)^\red \ar[r] &
    Z \times_{k, \Frobenius_k} k,
}
\end{equation}
  with morphisms $\bar \Frobenius_{Z/k}$, $\bar g$, and $f$ flat and
  finite with respective degrees $8, 4,$ and $2$.
  The geometric base change of the top triangle 
  admits a further factorization,
  \begin{equation}\label{eqn-factoring-geometric-base-change}
    \xymatrix{
      Z^\red_{\bar k} \ar[r] \ar[dr]^{\bar f_{\bar k}} \ar[d]_{h}&
      Z_{\bar k} \ar[d]_{f_{\bar k}}\\
      (Z \times_{k,\Frobenius_k} \bar k)^\red &
      X_{\bar k} \ar[l]_-{\bar g_{\bar k}},\\
    }
  \end{equation}
  where the morphism 
  $\bar f_{\bar k}: Z_{\bar k}^\red \to X_{\bar k}$ 
  identifies
  $Z_{\bar k}^\red \cong  \P^2_{\bar k}$ with the normalization
  of the variety $X_{\bar k}$. 
\end{proposition}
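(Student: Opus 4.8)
The plan is to read both diagrams off the explicit equation $Z=(\sum\alpha_iX_i^2=0)$, combined with the general factorization of relative Frobenius through a foliation quotient. First I would invoke the Definition/Lemma of \S\ref{smooth-1-foliations-subsection}: since $X=Z/\sF$, the quotient morphism $f\colon Z\to X$ is purely inseparable and $\Frobenius_{Z/k}=g\circ f$ for a unique $g\colon X\to Z\times_{k,\Frobenius_k}k$; moreover $f$ is finite flat of degree $p^{\rank\sF}=2$ by Proposition~\ref{dualizing-sheaf-generic-fibre-proposition}(2). The crucial computation is to identify the Frobenius twist explicitly. Writing $Y_i:=X_i\otimes 1$, the twisting relation $\alpha_iX_i^2\otimes 1=Y_i^2\otimes\alpha_i^2$ turns the defining form into $\sum\alpha_i^2Y_i^2$, and in characteristic $2$ this is exactly $(\sum\alpha_iY_i)^2$. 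Hence $Z\times_{k,\Frobenius_k}k$ is a double hyperplane whose reduction $(Z\times_{k,\Frobenius_k}k)^\red=(\sum\alpha_iY_i=0)$ is a copy of $\P^2_k$. Because $Z$ and $X$ are regular, hence reduced, both $\Frobenius_{Z/k}$ and $g$ factor uniquely through this reduction, producing $\bar\Frobenius_{Z/k}$ and $\bar g$ with $\bar\Frobenius_{Z/k}=\bar g\circ f$, which is the content of the first diagram.

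Next I would pin down the degrees and flatness. On function fields the composite $\bar\Frobenius_{Z/k}^\ast$ sends the affine coordinate $y_i$ to $x_i^2$, so its image is $k(x_1^2,x_2^2)$ (the defining relation eliminates $x_3^2$), and a direct count gives $[k(Z):k(x_1^2,x_2^2)]=8$; thus $\deg\bar\Frobenius_{Z/k}=8$ and therefore $\deg\bar g=8/2=4$. For flatness I would use miracle flatness: $\bar\Frobenius_{Z/k}$ and $\bar g$ are finite morphisms from the Cohen--Macaulay surfaces $Z$ and $X$ onto the regular surface $\P^2_k$, hence automatically flat, while $f$ is already known to be flat. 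This settles part (1).

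For part (2) I would base change the first diagram to $\bar k$ and run the same identification one level further: over $\bar k$ each $\alpha_i$ acquires a square root, so $\sum\alpha_iX_i^2=(\sum\sqrt{\alpha_i}X_i)^2$ and $Z_{\bar k}$ is again a double hyperplane with $Z_{\bar k}^\red=(\sum\sqrt{\alpha_i}X_i=0)\cong\P^2_{\bar k}$. The maps $h$ and $\bar g_{\bar k}$ are the base changes (followed by reduction) of $\bar\Frobenius_{Z/k}$ and $\bar g$, and $\bar f_{\bar k}$ is the restriction of $f_{\bar k}$ along the closed immersion $Z_{\bar k}^\red\hookrightarrow Z_{\bar k}$. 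The essential point is that $\bar f_{\bar k}\colon Z_{\bar k}^\red\to X_{\bar k}$ is the normalization. Since $Z_{\bar k}^\red\cong\P^2_{\bar k}$ is irreducible and maps finitely and surjectively onto $X_{\bar k}$, the target is irreducible, and it is reduced by Proposition~\ref{geometric-reductivity-proposition}, so $X_{\bar k}$ is integral; as $\P^2_{\bar k}$ is normal, it suffices to show $\bar f_{\bar k}$ is finite and birational, after which it is the normalization by the universal property. Finiteness is immediate. For birationality I would localize at the generic point $\eta_X$ of $X_{\bar k}$: the sheaf $f_{\bar k,\ast}\OO_{Z_{\bar k}}$ has generic rank $\deg f=2$, while $\OO_{Z_{\bar k},\eta_Z}$ is Artinian of length $2$ (the double plane $\OO_{\P^3_{\bar k},\eta}/(\ell^2)$ with $\ell=\sum\sqrt{\alpha_i}X_i$); comparing $\dim_{\kappa(\eta_X)}\OO_{Z_{\bar k},\eta_Z}=2$ with length $\cdot\,[\kappa(\eta_Z):\kappa(\eta_X)]$ forces the residue extension to be trivial, so $\bar f_{\bar k}$ is birational.

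The step I expect to be the main obstacle is this last degree bookkeeping across the non-reduced scheme $Z_{\bar k}$: one must carefully split the degree $2$ of $f$ into the length-$2$ nilpotent thickening of the double plane versus a residue-field extension, and show it is entirely the former, so that $\bar f_{\bar k}$ is genuinely birational despite $f$ being inseparable of degree $2$. The same phenomenon governs the degree-$8$ computation in part (1): it is precisely the non-reducedness of the Frobenius twist that makes $\bar\Frobenius_{Z/k}$ have degree $8$ rather than the naive $p^{\dim Z}=4$.
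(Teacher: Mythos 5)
Your proof is correct, and its skeleton is forced to agree with the paper's (factor through the reductions using that $Z$ and $X$ are reduced, identify the Frobenius twist as the double plane $((\sum\alpha_iY_i)^2=0)$, compute degrees, and conclude that a finite birational map from the normal $\P^2_{\bar k}$ onto the integral $X_{\bar k}$ is the normalization), but the two technical pivots are executed by genuinely different means. For flatness and the degree $8$: the paper works on the chart $(X_0\neq 0)$ and exhibits $M=k[x_1,x_2,x_3]/(\alpha_0+\sum\alpha_ix_i^2)$ as a free module of rank $8$ over $S=k[u_1,u_2,u_3]/(\alpha_0+\sum\alpha_iu_i)$ with explicit basis $\langle 1,x_1,x_2,x_3,x_1x_2,x_1x_3,x_2x_3,x_1x_2x_3\rangle$, then deduces flatness of $\bar g$ by descent along the faithfully flat $f$; you instead compute $[k(Z):k(x_1^2,x_2^2)]=8$ on function fields and invoke miracle flatness (finite morphism, Cohen--Macaulay source, regular target), which is valid since $Z$ and $X$ are regular and the reduced twist is $\P^2_k$ (and your unproved claim that $\bar g$ is finite is indeed immediate by cancellation, as $g$ is finite by the Definition/Lemma). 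One side benefit of the paper's explicit basis is that it is reused verbatim in the proof of Proposition~\ref{affine-coordinate-ring-proposition}. For birationality of $\bar f_{\bar k}$: the paper computes that $h\colon[X_i]\mapsto[X_i^2]$ has degree $4$ and that $\bar g_{\bar k}$ has degree $4$, so $\deg\bar f_{\bar k}=1$; you instead do length bookkeeping at generic points, using that $(f_{\bar k,*}\OO_{Z_{\bar k}})_{\eta_X}=\OO_{Z_{\bar k},\eta_Z}$ has $\kappa(\eta_X)$-dimension $2$ and length $2$, forcing the residue extension to be trivial. Your route leans on geometric integrality of $X_{\bar k}$ via Proposition~\ref{geometric-reductivity-proposition}; this citation is legitimate and non-circular, since that proposition is proved in \S\ref{geometric-reducedness-section} by a direct computation with the derivation $\theta_\P$, independent of the present statement (and the paper, too, tacitly uses this integrality to speak of birationality and of ``the variety $X_{\bar k}$''). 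What your version buys is a conceptual explanation of the degree accounting: the inseparable degree $2$ of $f$ is absorbed entirely by the length-$2$ nilpotent structure of the double plane, which is precisely why $\bar f_{\bar k}$ comes out birational; what the paper's version buys is explicitness, with every degree realized by a concrete map of coordinate rings.
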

\begin{proof}
  Diagram \eqref{eqn-factoring-frobenius} clearly exists and commutes 
  since both $Z$ and $X$ are regular varieties and hence
  reduced schemes.
  By Proposition \ref{dualizing-sheaf-generic-fibre-proposition}, the
  morphism  $f:Z
  \to X$ is flat and finite of degree $2$. 

  We next make computations 
  on the affine chart $U = (X_0 \neq 0)$, and by symmetry, analogous
  assertions are true over any chart of the form $(X_i \neq 0)$.
  Restricted to $U$, the top triangle of
  \eqref{eqn-factoring-frobenius} is dual to the following triangle
  of $k$-algebra morphisms: 
\begin{equation}\label{eqn-algebra-factorization}
  \xymatrix{
   &
  M :=  k[x_1, x_2,x_3]/(\alpha_0 + \sum \alpha_i x_i^2)\\
  S :=  k[u_1, u_2, u_3]/ (\alpha_0 + \sum \alpha_i u_i)\ar[ur]^{\bar
      \Frobenius_{Z/k}^\sharp} \ar[r]^-{\bar g^\sharp} &
    R := \OO_X|_U \ar[u]^{f^\sharp},
  }
\end{equation}
where $\bar \Frobenius_{Z/k}^\sharp$ is given by $u_i \mapsto x_i^2$.
It is  easy to check
that $\bar \Frobenius_{Z/k}$ is flat and finite of rank $8$
because $M$ is a rank $8$ free
$S$-module with 
basis $\langle 1, x_1, x_2,
x_3, x_1x_2, x_1x_3, x_2x_3, x_1x_2x_3 \rangle$.
Also, since $f$ is flat
and surjective, it is faithfully flat.  Thus,
$\bar g$ is flat, and hence finite of degree $8/2 = 4$. 

To finish the proof, consider the geometric base change diagram
\eqref{eqn-factoring-geometric-base-change}.  The morphism $h$,
given explicitly as a morphism between the hyperplanes
$Z^\red_{\bar k} \cong (\sum_{i= 0}^3 \sqrt{\alpha_i}X_i = 0)$ and
$(Z \times_{k, \Frobenius_k} \bar k)^\red \cong (\sum_{i=0}^3 \alpha_i
U_i = 0),$
is defined by the rule $[X_0:X_1:X_2:X_3] \mapsto
[X_0^2:X_1^2:X_2^2:X_3^2]$. This is easily seen to be a finite
dominant morphism of degree $4$.  The morphism $\bar g_{\bar k}$ is
also a dominant morphism of degree $4$ that factors $h$.
This implies that $\bar f_{\bar k}$ is finite of degree $1$, and hence
a birational morphism. 
Since $Z_{\bar k}^\red$ is a hyperplane in $\P^3_{\bar k}$, it is
isomorphic to $ \P^2_{\bar k}$, and thus $\bar f_{\bar k}$ is a
normalization morphism.
\end{proof}

\subsection{Local ring of
  functions}\label{local-ring-of-functions-subsection}
\setcounter{equation}{0}

We compute $\OO_X$ on an affine chart $(X_{i_0} \neq
0) \subseteq Z$,
but for simplicity we assume
 $i_0 = 0$, as the computations on other charts are analogous by
symmetry.

\begin{proposition}\label{affine-coordinate-ring-proposition}
  On the open $(X_0 \neq 0) \subseteq Z$, the ring of functions
  has presentation 
  $$\OO_X|_{(X_0 \neq 0)} = k[u_1,u_2,u_3, t_1, t_2,
    t_3]/(r_0,\ldots, r_6),$$ with the relations $r_i$ defined as:
$$  \begin{array}{l l l}
    r_0 := \alpha_0 + \alpha_1 u_1 + \alpha_2 u_2 + \alpha_3 u_3 &&
    r_4 := t_2t_3 + u_1u_2u_3 + (u_1+u_1^2)t_1 + u_1u_2t_2  + u_1u_3t_3  \\
    r_1 := t_1^2 + u_2u_3 + u_2u_3^2 + u_2^2u_3 &&
    r_5 := t_1t_3 + u_1u_2u_3 + u_1u_2t_1 + (u_2 + u_2^2)t_2  + u_2u_3t_3  \\
    r_2 := t_2^2 + u_1u_3 + u_1^2u_3 + u_1u_3^2 &&
    r_6 := t_1t_2 + u_1u_2u_3  + u_1u_3t_1 + u_2u_3t_2  + (u_3 + u_3^2)t_3. \\
    r_3 := t_3^2 + u_1u_2 + u_1^2u_2 + u_1u_2^2 && 
  \end{array}$$
  Moreover, the inclusion of algebras $\OO_X \subseteq \OO_Z$ dual to
  the morphism $f: Z \to X$ is given by
  $$k[u_1,u_2,u_3,t_1,t_2,t_3]/(r_0,\ldots,r_6) \to k[x_1,x_2,x_3]/(\sum \alpha_i
  x_i^2),$$
  via 
  $u_i \mapsto x_i^2$ and $t_i \mapsto x_jx_k(1 + x_j + x_k)$, for
  each assignment of indices
  $\{i,j,k\} = \{1,2,3\}$.  
\end{proposition}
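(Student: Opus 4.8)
The plan is to compute $\OO_X$ on this chart directly as the ring of $\theta_\P$-invariants inside $\OO_Z$, leveraging the free-module structure already recorded in Proposition~\ref{proposition-factorization}. Since $f:Z\to X$ is purely inseparable it is a homeomorphism, so the chart $(X_0\neq 0)\subseteq Z$ descends to an open subset of $X$ on which $\OO_X$ is precisely the kernel of $\theta_\P=\sum_{i\neq 0}(x_i+x_i^2)\partial_{x_i}$ acting on $M:=\OO_Z|_{(X_0\neq 0)}=k[x_1,x_2,x_3]/(\alpha_0+\sum\alpha_i x_i^2)$. Writing $u_i:=x_i^2$, one has $\theta_\P(u_i)=0$, so $\theta_\P$ is $S$-linear for $S:=k[u_1,u_2,u_3]/(r_0)$, and by Proposition~\ref{proposition-factorization} the module $M$ is free of rank $8$ over $S$ on the monomial basis $1,x_1,x_2,x_3,x_1x_2,x_1x_3,x_2x_3,x_1x_2x_3$. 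First I would record that the six elements $u_i$ and $t_i:=x_jx_k(1+x_j+x_k)$ (with $\{i,j,k\}=\{1,2,3\}$) lie in $R:=\ker\theta_\P$: the identity $\theta_\P(x_i^2)=0$ is immediate, and a two-term Leibniz computation in characteristic $2$ gives $\theta_\P(t_i)=0$.

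The key structural step is to pin down $R$ as a free rank-$4$ $S$-module with explicit basis $1,t_1,t_2,t_3$. Because $\theta_\P^2=\theta_\P$ was shown in \S\ref{subsection-q-1-d-1}, the operator $\theta_\P$ is idempotent and $M$ splits as $R\oplus\theta_\P(M)$. I would compute the matrix of $\theta_\P$ on the monomial basis (its entries lying in $S$) and solve the resulting homogeneous linear system over $S$ for the kernel. The computation collapses: the coefficient of the top monomial $x_1x_2x_3$ must vanish, the degree-one coefficients are then forced in terms of the degree-two ones, and the remaining compatibility condition is automatically satisfied because its two summands cancel in characteristic $2$. This leaves exactly four free parameters, and reading them off identifies the basis of $R$ over $S$ as $1,t_1,t_2,t_3$; in particular $R$ has rank $4$, matching $\deg\bar g=4$ from Proposition~\ref{proposition-factorization}.

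It remains to show that the six relations $r_1,\dots,r_6$ cut out the algebra structure on $R$. I would verify that each $r_i$ maps to $0$ by computing the products $t_i^2$ and $t_it_j$ ($i\neq j$) and re-expressing them in the basis $\{1,t_1,t_2,t_3\}$: the squares $t_i^2$ land in $S$ because the Frobenius kills all cross terms in characteristic $2$, yielding $r_1,r_2,r_3$, while the mixed products $t_it_j$ expand to $S$-linear combinations of the basis, yielding $r_4,r_5,r_6$. With the relations verified, I would close the argument abstractly. Set $B:=k[u_1,u_2,u_3,t_1,t_2,t_3]/(r_0,\dots,r_6)$; the relations $r_1,\dots,r_6$ let one reduce every monomial in the $t$'s to an $S$-linear combination of $1,t_1,t_2,t_3$, so $B$ is generated as a module over $S$ by these four elements. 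The induced surjection $B\twoheadrightarrow R$ therefore carries a four-element generating set onto an $S$-basis of the free rank-$4$ module $R$, and a surjection from a four-generated module onto a free module of rank $4$ that sends generators to a basis is necessarily an isomorphism, so $B\cong R$. The final formulas $u_i\mapsto x_i^2$ and $t_i\mapsto x_jx_k(1+x_j+x_k)$ are then simply the inclusion $R\subseteq M$ unwound.

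I expect the main obstacle to be purely computational: expanding the six products $t_i^2,t_it_j$ and rewriting them against the basis to confirm the precise shape of $r_1,\dots,r_6$ is tedious bookkeeping, even though each step is mechanical once one works modulo $x_i^2=u_i$ in characteristic $2$. Conceptually the argument is clean — idempotency of $\theta_\P$ splits $M$ and hands over the rank-$4$ summand $R$, and the freeness of $R$ reduces the presentation claim to the elementary module-theoretic fact quoted above — so no genuine difficulty should arise beyond organizing the multiplication table.
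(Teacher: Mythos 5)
Your proposal is correct and follows essentially the same route as the paper's proof: compute the matrix of the $S$-linear operator $\theta_{\P}$ on the monomial basis of the free rank-$8$ module $M$, read off its kernel from the block structure (the compatibility condition vanishing because $AB=0$ in characteristic $2$), verify the relations $r_0,\dots,r_6$ from the multiplication table of the basis $\langle 1,t_1,t_2,t_3\rangle$, and conclude via the surjection-carrying-generators-to-a-basis argument. The only cosmetic difference is that you get freeness and rank $4$ of $R$ directly from the idempotency $\theta_{\P}^2=\theta_{\P}$ together with the explicit kernel computation, whereas the paper first invokes flatness of $\bar g$ from Proposition~\ref{proposition-factorization} and freeness of projective modules over a two-variable polynomial ring, a preliminary step that the explicit computation renders unnecessary.
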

\begin{proof}
Recall the diagram \eqref{eqn-algebra-factorization}, and the notation
established there.
The $S$-algebra $R = \OO_X|_{(X_0 \neq 0)}$ is flat and hence
projective as an $S$-module.  As $S$ is
isomorphic to a polynomial ring in two variables, over which all
projective modules are free,  $R$ is actually a free
$S$-submodule of rank $4$ of the free $S$-module $M$ of rank $8$ 
with basis  $\langle 1, x_1, x_2,
x_3, x_1x_2, x_1x_3, x_2x_3, x_1x_2x_3 \rangle$.

The
derivation $\theta_{\P} := \Theta_{\P}|_{(X_0 \neq 0)} = \sum_{i \neq 0} (x_i +
x_i^2)\partial_{x_i}$ is $S$-linear because $S \subseteq R =
\ker(\theta_{\P})$.
Therefore, we may compute its matrix as an $S$-module morphism 
$M \stackrel{\theta_{\P}}{\to} M$:
$$
\begin{blockarray}{r c c  c c c   c c c   c }
&& 1 & x_1 & x_2 & x_3 & x_1x_2 & x_1x_3 & x_2x_3 & x_1x_2x_3 \\\\
\begin{block}{r c ( c | c c c | c c c | c )}
1   &&
    &x_1^2&x_2^2&x_3^2&        &        &        & \\
\cline{3-10}
x_1 &&
    & 1   &     &     & x_2^2  & x_3^2  &        & \\
x_2 &&
    &     & 1   &     & x_1^2  &        &  x_3^2 & \\
x_3 &&
    &     &     &  1  &        & x_1^2  &  x_2^2 & \\
\cline{3-10}
x_1x_2 &&
    &     &     &     &        &        &        & x_3^2  \\
x_1x_3 &&
    &     &     &     &        &        &        & x_2^2  \\
x_2x_3 &&
    &     &     &     &        &        &        & x_1^2  \\
\cline{3-10}
x_1x_2x_3 &&
    &     &     &     &        &        &        & 1 \\
\end{block}
\end{blockarray}
$$
This is a block matrix, which makes computing its kernel easy:

$$\begin{pmatrix}
0 & A & 0 & 0\\
0 & 1 & B & 0\\
0 & 0 & 0 & C\\
0 & 0 & 0 & 1
\end{pmatrix} \cdot
\begin{pmatrix}
  v_1 \\ 
  v_2 \\
  v_3 \\
  v_4 
\end{pmatrix} = 
\begin{pmatrix}
  Av_2\\
  v_2 + Bv_3\\
  Cv_4\\
  v_4
\end{pmatrix},
$$
and this vector equals zero if and only if $v_4 = 0, v_2 = Bv_3$ and
$AB v_3 = 0$.  In our situation, the matrix $AB = 0$, and so we see
that the kernel of $M$ defined by $v_4 = 0, v_2 = Bv_3$.  Thus, a
basis of the kernel is given by the four elements
$$\begin{array}{l l}
\phi(t_0) := &1 \\
\phi(t_1) :=& x_2x_3 + x_2^2x_3 + x_2x_3^2, \\
\phi(t_2) :=& x_1x_3 + x_1^2x_3 + x_1x_3^2, \\
\phi(t_3) :=& x_1x_2 + x_1^2x_2 + x_1x_2^2,
\end{array}$$
and so $R = k[x_1^2, x_2^2, x_3^2, t_1, t_2, t_3] \subseteq
k[x_1,x_2,x_3]/(\sum \alpha_i x_i^2)$.

Clearly, there is a surjective morphism $\phi$
 from the polynomial algebra
$k[u_1,u_2,u_3,t_1,t_2,t_3]$ onto $R$, defined by the rules $u_i
 \mapsto x_i^2$ and $t_i \mapsto \phi(t_i)$.  
 The relations $r_0, \ldots, r_6$ listed above may be verified to be
 in $\ker \phi$ simply by writing the multiplication rules for the $S$-basis
$\langle \phi(t_0), \phi(t_1), \phi(t_2), \phi(t_3) \rangle.$  
As a result, there is an induced surjective map of $S$-algebras,
$$\bar \phi: k[u_1,u_2,u_3,t_1,t_2,t_3]/(r_0,\ldots, r_6) \surject R.$$
The domain is a free $S$-module with basis $\langle 1, t_1,
t_2, t_3\rangle$, 
since all monomials in the $t_i$ can be written as
$S$-linear combinations of these elements modulo the relations $r_i$.
Therefore, $\bar \phi$ is an isomorphism.
\end{proof}

\subsection{An equation defining the singular locus}
\setcounter{equation}{0}

We  apply the Jacobian criterion to  the presentation of $R =
\OO_{X}|_{(X_0\neq 0)}$ given in Proposition
\ref{affine-coordinate-ring-proposition} to find the set of non-smooth 
points of $X$.  It turns out that these points can be described
set-theoretically as the vanishing locus of a single equation.
 \begin{proposition}\label{proposition-set-theoretic-description}
   The non-smooth locus $X^{\sing}$ of $X$ is set-theoretically equal 
   to the codimension-$1$ locus defined by the single equation $\alpha_0 +
   \alpha_1u_1^2 +  \alpha_2u_2^2 + \alpha_3u_3^2 = 0$.  In
   particular, $X$ is not geometrically normal.
 \end{proposition}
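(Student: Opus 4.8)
The plan is to run the Jacobian criterion for smoothness over $k$ on the explicit presentation $R = k[u_1,u_2,u_3,t_1,t_2,t_3]/(r_0,\ldots,r_6)$ of $\OO_X$ on the chart $(X_0\neq 0)$ supplied by Proposition~\ref{affine-coordinate-ring-proposition} (the other charts $(X_i\neq 0)$ being symmetric). Since $X$ is a surface sitting in $\A^6$ it has codimension $4$, and at every point the fibre of $\Omega_{X/k}$ has dimension at least $\dim X = 2$; hence the $7\times 6$ Jacobian $J = (\partial r_i/\partial v_j)$ has rank at most $4$ everywhere, and $X^{\sing}$ is exactly the locus where $\rank J \leq 3$. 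Writing $w_i := u_i + u_i^2$ and reducing modulo $r_0$, the candidate equation rewrites as $\alpha_0 + \alpha_1u_1^2 + \alpha_2u_2^2 + \alpha_3u_3^2 = \alpha_1w_1 + \alpha_2w_2 + \alpha_3w_3 =: S$, so the goal is to show $X^{\sing}$ equals the closed subscheme $V(S) := \{S=0\}$ set-theoretically.

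Because we are in characteristic $2$ each $t_i^2$ has vanishing $t$-derivatives, so $J$ is block-shaped as $\left(\begin{smallmatrix} A & 0 \\ B & C\end{smallmatrix}\right)$, with $A$ (rows $r_0,\ldots,r_3$, the $u$-columns) independent of the $t$-variables and $C$ (rows $r_4,r_5,r_6$, the $t$-columns) symmetric, with diagonal $(w_1,w_2,w_3)$ and off-diagonal entries $t_3+u_1u_2,\, t_2+u_1u_3,\, t_1+u_2u_3$. Two computations drive everything. First, the relations $r_1,r_2,r_3$ yield, in the domain $R$, the uniform identity $C_{pq}^2 = w_pw_q$; hence every $2\times 2$ minor of $C$ squares to $0$ and so vanishes, giving $\rank C \leq 1$. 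Second, a direct expansion shows the four $3\times 3$ minors of $A$ (omitting rows $r_0,r_1,r_2,r_3$ in turn) are $0,\, w_1S,\, w_2S,\, w_3S$.

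These two facts give $X^{\sing}\subseteq V(S)$ at once: if $S\neq 0$ then some $w_i\neq 0$, and bordering the nonzero $3\times 3$ minor $w_iS$ of $A$ by the row $r_{3+i}$ and the column $t_i$ (whose common entry is the diagonal entry $C_{ii}=w_i$, while the rows of $A$ vanish in column $t_i$) produces a $4\times 4$ minor equal to $w_i^2 S \neq 0$, so $\rank J = 4$ and $X$ is smooth there. For the reverse inclusion $V(S)\subseteq X^{\sing}$, note that on $V(S)$ the minors $w_iS$ vanish so $\rank A\leq 2$, while the deeper locus $\rank A\leq 1$ forces $w_1=w_2=w_3=0$; this last has no point on $X$, since by Proposition~\ref{proposition-factorization} such a point would lift to an $\F_2$-rational point of the hyperplane $(\sum\sqrt{\alpha_i}X_i=0)\cong Z^{\red}_{\bar k}$, contradicting the algebraic independence of the $\sqrt{\alpha_i}$. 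Thus $\rank A = 2$ on $V(S)$, with $\ker A$ spanned by $v := (w_1,w_2,w_3)$, and a short computation with the block form gives $\rank J = \rank A + \dim\big(\im C + B(\ker A)\big) = 2 + \dim\big(\im C + \langle Bv\rangle\big)$. Hence $X$ is non-smooth all along $V(S)$ precisely when $Bv\in\im C$, which I verify by directly expanding $Bv$ using the relations $r_i=0$ (equivalently, via the parametrization $u_i = x_i^2$, $w_i = (x_i+x_i^2)^2$, under which $\im C$ is spanned by $(x_1+x_1^2,\,x_2+x_2^2,\,x_3+x_3^2)$ and $v$ is its coordinatewise square).

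The main obstacle is this final membership $Bv\in\im C$, i.e. the vanishing on $V(S)$ of the remaining ``mixed'' maximal minors of $J$; the block structure and the two rank bounds reduce the entire question to this single vector identity, which is then a mechanical polynomial check. Granting it, $X^{\sing} = V(S)$ set-theoretically, and since $S$ is a nonzero nonunit in the domain $R$ this is a nonempty curve, i.e. of codimension $1$. Finally, $X$ is Gorenstein (being a del Pezzo scheme), so $X_{\bar k}$ is Cohen--Macaulay and in particular satisfies Serre's condition $S_2$; as $X^{\sing}$ base-changes to the non-regular locus of $X_{\bar k}$, which we have just seen has codimension $1$, the surface $X_{\bar k}$ fails $R_1$ and is therefore not normal. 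Hence $X$ is not geometrically normal.
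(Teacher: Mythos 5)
Your proposal is correct, and its skeleton is the same as the paper's: the Jacobian criterion applied to the presentation of Proposition~\ref{affine-coordinate-ring-proposition}, the block shape of $J$, the computation of the $3\times 3$ minors of $A$ as $0,\,w_1S,\,w_2S,\,w_3S$, and the fact that no point of $X$ has $w_1=w_2=w_3=0$ (the paper proves this by a residue-field argument, you by lifting to $Z$; both work). Two of your steps genuinely differ from the paper's proof. First, you obtain the vanishing of the $2\times 2$ minors of $C$ from the uniform identity $C_{pq}^2=w_pw_q$ plus integrality of $R$, whereas the paper expands representative minors and identifies them with the relations $r_1$ and $r_5$. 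Second, and more substantively, for the inclusion $V(S)\subseteq X^{\sing}$ the paper asserts that every $4\times 4$ minor of $J$ is a product of one of three special shapes (two $2\times 2$ minors of $A$ and $B$, an entry of $A$ times $\det B$, or a $3\times 3$ minor of $A$ times an entry of $B$); as stated this enumeration omits the ``mixed'' minors involving entries of the lower-left block (e.g.\ those using all three $u$-columns, one $t$-column, and two or three of the rows $r_4,r_5,r_6$), and your rank identity $\rank J=\rank A+\dim\bigl(\im C+B(\ker A)\bigr)$ is precisely what handles these honestly, reducing the whole inclusion to the single membership $Bv\in\im C$ along $V(S)$. The one thing you must still do is carry out that ``mechanical polynomial check,'' since your argument is incomplete without it; reassuringly, it holds in the strongest possible form: under the parametrization $u_i=x_i^2$, $t_i=x_jx_k(1+x_j+x_k)$, setting $e:=(x_1+x_1^2,\,x_2+x_2^2,\,x_3+x_3^2)$ one finds $C=e\,e^{\top}$ and the identity $Bv=x_1x_2x_3\,H\,e$ holds identically (not merely on $V(S)$), where $H=(1+x_1)(1+x_2)(1+x_3)+\sum_{\mathrm{cyc}}x_1(1+x_1)^2(x_2+x_3)$; hence $\rank J\le 3$ at every point of $V(S)$. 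So your route not only works but is tighter than the paper's at the one point where the paper's enumeration of minors is loose. A minor remark: for the final normality statement you do not need $S_2$ or Cohen--Macaulayness at all, since a normal surface is $R_1$, which already contradicts the codimension-$1$ non-regular locus of $X_{\bar k}$.
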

\begin{proof}
The Jacobian matrix is as
follows:
$$
\begin{blockarray}{r c  c c c   c c c }
    && \partial_{u_1} & \partial_{u_2} & \partial_{u_3} &
  \partial_{t_1} & \partial_{t_2} & \partial_{t_3} \\\\
\begin{block}{r c ( c c c |  l l l ) }
r_0 &   &
\alpha_1 & \alpha_2 & \alpha_3 &\\
r_1 &&
 & u_3 + u_3^2 & u_2+ u_2^2  && \\
r_2 &&
u_3 + u_3^2 & & u_1 + u_1^2  & & \\
r_3 &&
u_2 + u_2^2 & u_1 + u_1^2 & & & \\
\cline{3-8}
r_4 && 
 & & & u_1 + u_1^2 & t_3 + u_1u_2 & t_2 + u_1u_3 \\
r_5 && 
 & (\ast) & & t_3 + u_1u_2 & u_2 + u_2^2 & t_1 + u_2u_3 \\
r_6 &&
 & & & t_2 + u_1u_3 & t_1 + u_2u_3 & u_3 + u_3^2 \\
\end{block}
\end{blockarray}
$$

As $R$ is a surface described in a $6$-dimensional affine space, the
singular locus is described by the ideal generated by the $4\times
4$-minors of this matrix.  As the Jacobian matrix comprises blocks in
the form  
$$\begin{pmatrix} 
A & 0\\
\ast & B
\end{pmatrix},$$
with $A= A^{3\times 4}$ and $B = B^{3 \times 3}$, its $4\times 4$ minors
are either the product of two $2\times 2$ minors of $A$ and $B$,
the product of an entry of $A$ by the determinant of $B$, or the product of a
$3\times 3$ minor of $A$ by an entry of $B$.  
Initially, the task of computing this ideal may appear daunting,
 but the following observation
reduces the work dramatically.

\begin{lemma}\label{lemma-minors-of-matrix}
  Let $B$ be the $3\times 3$ matrix defined above.
  \begin{enumerate}
  \item The $2 \times 2$ minors of $B$ are $0$ in $R$.
  \item The diagonal entries of $B$ generate the unit ideal in $R$.
  \end{enumerate}
\end{lemma}
\begin{proof}
  Up to cyclic
  permutations of the indices $\{1,2,3\}$, there are only two types of
  $2\times 2$-minors of $B$.
  A minor of the first type is $B_{3,3}$:
  \begin{align*}
    B_{3,3} & =
    (t_2 + u_1u_3)(u_2 + u_2^2) + (t_3 + u_1u_2)(t_1 + u_2u_3)\\
    & = u_1u_2u_3 + u_1u_2^2u_3 + (u_2 + u_2^2)t_2 + t_1t_3 + u_2u_3t_3
    + u_1u_2 t_1 + u_1u_2^2u_3\\
    & = r_5 = 0.
  \end{align*}
  A minor of the second type is $B_{1,3}$:
  \begin{align*}
    B_{1,3} & = (t_1 + u_2u_3)^2 + (u_3 + u_3^2)(u_2 + u_2^2)\\
    & = t_1^2 + u_2^2u_3^2 + u_2u_3 + u_2^2u_3 + u_2u_3^2 + u_2^2u_3^2\\
    & = r_1 = 0.
  \end{align*}
  This proves (1).

For (2), assume otherwise, and let $\mathfrak m$ be a maximal ideal
containing the ideal generated by the entries of $B$.
In the residue field $\kappa := R/\mathfrak m$, 
the image of the entry
 $u_i + u_i^2$ is $0$,
forcing $u_i = \eps_i \in \kappa$, for $\eps_i \in \{0,1\}.$
The relation $r_0 = 0$ implies $\alpha_0 + \eps_1
\alpha_1 + \eps_2 \alpha_2 + \eps_3 \alpha_3 = 0$ in
$k\subseteq \kappa$, which contradicts the algebraic independence of
the $\alpha_i$'s. 
\end{proof}

From this lemma, it follows that the ideal generated by $4 \times 4$
minors of $M$ is generated by the $3 \times 3$ minors of $A$.  
Denoting $h := \alpha_0 +
 \alpha_1u_1^2 +  \alpha_2u_2^2 + \alpha_3u_3^2$,
 these minors of $A$ are $A_0 = 0, A_1 = (u_1 + u_1^2)h, A_2 = (u_2 +
 u_2^2)h, A_3 = (u_3 + u_3^2)h$.
 Lemma~\ref{lemma-minors-of-matrix}(2) shows these 
 minors generate the principal ideal $(h)$.
\end{proof}

\subsection{The geometry of the singular locus}
\setcounter{equation}{0}

Let $C$ be the reduced subscheme corresponding to the non-smooth locus
$X^{\sing}_{\bar k} \subseteq X_{\bar k}$. 
By Proposition \ref{proposition-set-theoretic-description}, the curve
$C$ is set-theoretically cut out by the
equation $\alpha_0 + \alpha_1u_1^2 + \alpha_2u_2^2 + \alpha_3u_3^2 = 0$,
which is simply the square of the equation $\sqrt h = 0$ for
$$\sqrt{h} := \sqrt{\alpha_0} +
\sqrt{\alpha_1}u_1 + \sqrt{\alpha_2}u_2 + \sqrt{\alpha_3}u_3.$$
We expect this equation to be insufficient to
describe $C$ scheme-theoretically, because $X$ is not
smooth along this locus, so the maximum ideal of the local ring
$\OO_{X, C}$ 
requires more than one generator. 
This is indeed the case, and the structure of $C$ is as follows:

\begin{proposition}\label{proposition-scheme-theoretic-C}
  The curve $C$ is isomorphic to a rational cuspidal curve
  with $h^1(C, \OO_C) = 1$.  The singular point of
  the curve sits below the point in $Z_{\bar k} \subseteq \P^3_{\bar
    k}$ described by the intersection 
  of the three planes $(\sum \alpha_i^{1/2^j}X_i = 0)$, for $j = 1,2,3$.
\end{proposition}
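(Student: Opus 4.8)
The plan is to realize $C$ concretely as the image of a line under the normalization morphism $\bar f_{\bar k} \colon \P^2_{\bar k} \cong Z^{\red}_{\bar k} \to X_{\bar k}$ of Proposition~\ref{proposition-factorization}, and then to read off all of its invariants from an explicit parametrization. First I would identify the reduced preimage $L := (\bar f_{\bar k})^{-1}(C)^{\red}$. Since $C$ is set-theoretically $(\sqrt h = 0)$ by Proposition~\ref{proposition-set-theoretic-description} and the inclusion $\OO_X \subseteq \OO_Z$ sends $u_i \mapsto x_i^2$ (Proposition~\ref{affine-coordinate-ring-proposition}), the form $\sqrt h = \sqrt{\alpha_0} + \sum \sqrt{\alpha_i}\,u_i$ pulls back to $\sqrt{\alpha_0} + \sum \sqrt{\alpha_i}\,x_i^2 = (\sum \alpha_i^{1/4} x_i)^2$, a perfect square in characteristic $2$. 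Thus $L$ is the line $(\sum \alpha_i^{1/4} X_i = 0)$ inside the plane $Z^{\red}_{\bar k} = (\sum \alpha_i^{1/2}X_i = 0)$, and $\bar f_{\bar k}$ restricts to a finite morphism $\nu \colon L \cong \P^1 \to C$. The dominant map from $\P^1$ already shows $C$ is rational (and irreducible). Moreover, because $u_i \mapsto x_i^2$ and $\bar k$ is perfect, squaring is a bijection on $\bar k$-points, so $\nu$ is injective; hence $C$ is unibranch at every point and its only possible singularities are cusps.

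Next I would locate the cusp by computing where $\nu$ fails to be an immersion. Parametrizing $L$ linearly as $x_i = \xi_i + m_i s$, where $(m_1,m_2,m_3)$ is the common solution of the two linear forms cutting out $L$ (so $m_i$ is the appropriate $2\times 2$ minor of their coefficient matrix), I note that $du_i = d(x_i^2) = 0$ identically in characteristic $2$, so the differential of $\nu$ is governed entirely by the $t_i = x_jx_k(1+x_j+x_k)$. Since $dt_i = w_j\,dx_k + w_k\,dx_j$ with $w_i := x_i + x_i^2$, a short computation yields $\tfrac{dt_i}{ds} = c_{i,1} + m_jm_k(m_j+m_k)\,s^2$ for constants $c_{i,1}$ and $\{i,j,k\}=\{1,2,3\}$. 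The non-immersion locus is therefore cut out by $s^2 = c_{i,1}/(m_jm_k(m_j+m_k))$; as taking square roots is bijective in characteristic $2$ and the three resulting values coincide (equivalently, because $C$ must have a singular point), these equations share a unique common root $s_0$. This gives a single cusp and shows $\nu$ is an injective local immersion elsewhere, so that $\nu$ is birational onto $C$ (hence the normalization, $L$ being smooth) and $C$ is smooth away from $s_0$.

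The main obstacle is the identification of this cusp with the prescribed point: I would substitute $s_0$ into the linear form $\sum \alpha_i^{1/2^3}X_i$ and verify that it vanishes, i.e. that the unique non-immersion point lies on the third plane $(\sum \alpha_i^{1/8}X_i = 0)$. The recurring exponents $1/2^j$ reflect the iterated Frobenius structure of the construction — indeed the minors $m_i$ and the cubic coefficients $m_jm_k(m_j+m_k)$ are themselves perfect squares expressible through the eighth roots $\alpha_i^{1/8}$ — so the verification amounts to a somewhat involved algebraic identity best organized through this tower of Frobenius twists; this is the one genuinely computational step.

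Finally I would compute the arithmetic genus by a local analysis at $s_0$. Writing $\sigma = s - s_0$, one has $u_i - u_i(s_0) = m_i^2\sigma^2$ of order $2$ and $t_i - t_i(s_0) = (\ast)\,\sigma^2 + m_jm_k(m_j+m_k)\,\sigma^3$ of order $\geq 2$; since some $m_jm_k(m_j+m_k) \neq 0$ by the algebraic independence of the $\alpha_i$, eliminating the square terms between these produces an element of order exactly $3$, so the complete local ring of $C$ at the cusp is $\bar k[[\sigma^2,\sigma^3]]$, whence the delta invariant is $\delta = 1$. Feeding this into the normalization sequence $0 \to \OO_C \to \nu_*\OO_{\P^1} \to \mathcal{S} \to 0$ gives $\chi(\OO_C) = \chi(\OO_{\P^1}) - \delta = 0$, so that $h^1(C,\OO_C) = \delta = 1$, completing the proof that $C$ is a rational cuspidal curve of arithmetic genus one whose cusp sits below the point $\bigcap_{j=1}^3 (\sum \alpha_i^{1/2^j}X_i = 0)$.
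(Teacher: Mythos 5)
Your proposal is correct, but it takes a genuinely different route from the paper's proof. The paper works downstairs on $X_{\bar k}$: starting from the presentation of $\OO_X|_{(X_0\neq 0)}$ in Proposition~\ref{affine-coordinate-ring-proposition}, it passes to $R_{\bar k}/(\sqrt h)$, eliminates $u_2,u_3$ using the two linear relations, changes variables, identifies the nilradical via the coefficient identity \eqref{equation-coefficient-relation}, and lands on the explicit global model $\OO_C|_{(X_0\neq 0)} = \bar k[u,\sqrt{u}(u+\sqrt{c_{11}/c_{13}})]\subseteq \bar k[\sqrt u]$, from which rationality, the unique wild cusp, and its location (via Cramer's rule) are read off. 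You instead work upstairs on the normalization $Z^{\red}_{\bar k}\cong \P^2_{\bar k}$ furnished by Proposition~\ref{proposition-factorization}: you identify the reduced preimage line $L$ (the pullback of $\sqrt h$ under $u_i\mapsto x_i^2$ being the perfect square $(\sum\sqrt[4]{\alpha_i}x_i)^2$, which is precisely part (3) of Proposition~\ref{prop-explicit-description-of-reid}), parametrize it, locate the cusp as the unique non-immersion point of $\nu\colon L\to C$ --- a clean characteristic-$2$ observation, since $du_i=0$ and $dt_i/ds = c_{i,1}+m_jm_k(m_j+m_k)s^2$ --- and then compute $\delta=1$ from the formal germ $\bar k[[\sigma^2,\sigma^3]]$ and get $h^1(C,\OO_C)=1$ from the normalization sequence. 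Your route avoids computing the nilradical and the reduced coordinate ring altogether; the paper's route buys a complete scheme-theoretic model of $C$ rather than just the completed local ring at the cusp. Both arguments rest on deferred symmetric computations of comparable weight: the paper defers \eqref{equation-coefficient-relation} and the identity $\sqrt[4]{c_{11}/c_{13}}=\det(A_1)/\det(A_0)$; you defer the coincidence of the three ratios $c_{i,1}/\bigl(m_jm_k(m_j+m_k)\bigr)$ and the vanishing of $\sum\sqrt[8]{\alpha_i}\,x_i$ at $s_0$. Two cautions: your parenthetical fallback that the ratios must coincide ``because $C$ must have a singular point'' is not self-contained --- it would require Reid's classification together with $h^1(X_{\bar k},\OO_{X_{\bar k}})=1$, which comes uncomfortably close to assuming the genus statement being proven --- so the proof should rest on the direct verification; and your appeal to algebraic independence in the $\delta$-computation is genuinely necessary, since in characteristic $2$ all three cubic coefficients $m_jm_k(m_j+m_k)$ vanish identically whenever $m_1=m_2=m_3$, so one must check this degeneration does not occur.
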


\begin{proof}[Proof of \ref{proposition-scheme-theoretic-C}]
  Again we work over the chart $(X_0 \neq 0)$, and by symmetry our
  results will carry over to other opens $(X_i \neq 0)$.  We must compute 
  the quotient of the ring $R_{\bar k}/(\sqrt h)$ by its nilradical ideal.
  $$\bar k[u_1,u_2,u_3,t_1,t_2,t_3]/ (\sqrt{h}, r_0, r_1,\ldots, r_6).$$
  The first two relations $r_0 = \alpha_0 + \sum_{i=1}^3 \alpha_i u_i$
  and $\sqrt{h} = \sqrt \alpha_0 + \sum_{i = 1}^3 \sqrt{\alpha_i} u_i$
  are $\bar k$-linearly independent relations.
  Therefore, in the ring $R_{\bar k}/(\sqrt h)$,
  we can solve
  for $u_2$ and $u_3$ in terms of $u_1$, and rewrite 
  $$R_{\bar k}/(\sqrt h) \cong \bar k[u,t_1,t_2,t_3]/(r_1,\ldots,
  r_6),$$
  where the variable  $u_1$ is
  replaced by $u$ 
  and the variables $u_2$ and $u_3$ are replaced by the following expressions
  in $u$:
  $$\begin{array}{l l}
    u_2(u) = \frac{(\alpha_0 +
        \sqrt{\alpha_0\alpha_3}) + (\alpha_1 + \sqrt{\alpha_1\alpha_3}) u}{\alpha_2 + \sqrt{\alpha_2\alpha_3}}, &
    u_3(u) = \frac{\alpha_0 + \alpha_1u + \alpha_2u_2(u)}{\alpha_3}.
  \end{array}$$
  Hence the relations $r_1, r_2,$ and $r_3$ read
$$\begin{array}{l l l}
  r_1 = & t_1 ^2 + c_{10} & + ~ c_{11}u + c_{12}u^2 + c_{13}u^3 \\
  r_2= & t_2^2 +  0      & + ~ c_{21}u + c_{22}u^2 + c_{23}u^3 \\
  r_3= & t_3^2 +  0      & + ~ c_{31}u + c_{32}u^2 + c_{33}u^3, \\
\end{array}$$
for explicitly determined coefficients $c_{ij} \in \bar k$ whose
concrete description, 
for the sake of exposition, will be omitted but made available in an
auxiliary file on the author's homepage.
When written explicitly, it is straight-forward to 
check that these coefficients, for any pair $i, j \in \{1,2,3\}$,
satisfy the following relation:
\begin{equation}\label{equation-coefficient-relation}
c_{i1}c_{j3}+c_{j1}c_{i3} = 0.
\end{equation}
Make the following change of variables
$$\begin{array}{l l l}
s_1 := t_1 + \sqrt{c_{10}} + \sqrt{c_{12}}u\\
s_2 := t_2 + \sqrt{c_{22}}u\\
s_3 := t_3 + \sqrt{c_{32}}u,
\end{array}
$$
so that the relations $r_1,r_2,r_3$ become
$$\begin{array}{l l}
  r_1=& s_1^2 +
  c_{11}u + c_{13}u^3 \\   
  r_2=&  s_2^2 +  c_{21}u +  c_{23}u^3 \\
  r_3=&  s_3^2 +  c_{31}u + c_{33}u^3. 
\end{array}$$
The relations \eqref{equation-coefficient-relation}
imply $s_2^2 = \frac{c_{21}}{c_{11}} s_1^2$ and
$s_3^2 = \frac{c_{31}}{c_{11}} s_1^2$, so the 
nilradical of $R_{\bar k}/(\sqrt{h})$ must contain the relations  
$r_2' := s_2 + \frac{\sqrt{c_{21}}}{\sqrt{c_{11}}} s_1$ and
$r_3':= s_3 + \frac{\sqrt{c_{31}}}{\sqrt{c_{11}}} s_1$.  
By setting $s := s_1$, we obtain an isomorphism
$$R_{\bar k}/(\sqrt{h},r_2', r_3') \cong \bar k[u,s]/(s^2 + u(c_{11} +
c_{13}u^2),r_4,r_5,r_6).$$ 
Since $\bar k[u,s]/(s^2 + u(c_{11} + c_{13})u^2)$
is an
integral domain of dimension $1$, 
the relations $r_4, r_5, r_6$
are already $0$ in this ring.  Thus,
$$\OO_C|_{(X_0\neq 0)} 
= \bar k[u, \sqrt{u} (u + \sqrt{c_{11}/c_{13}})] 
\subseteq \bar k[\sqrt u].$$
\noindent From this description, is is clear that the only singular
point of $C$ 
is an ordinary cuspidal  
singularity of (wild) order $2$ occurring at
$${X_1^2}/{X_0^2} = u = \sqrt{c_{11}/c_{13}}.$$
Moreover, one can verify that $\sqrt[4]{c_{11}/c_{13}} = \det(A_1)/\det
(A_0)$
where the matrix $A_1$ is defined by replacing the first column of the
following matrix $A$ by the vector $b$:
$$
A := \begin{pmatrix}
  \sqrt{\alpha_1} & \sqrt{\alpha_2} & \sqrt{\alpha_3} \\
  \sqrt[4]{\alpha_1} & \sqrt[4]{\alpha_2} & \sqrt[4]{\alpha_3}\\
  \sqrt[8]{\alpha_1} & \sqrt[8]{\alpha_2} & \sqrt[8]{\alpha_3}\\
\end{pmatrix}, \hspace{2em}
b := \begin{pmatrix}
\sqrt{\alpha_0}\\ \sqrt[4]{\alpha_0} \\ \sqrt[8]{\alpha_0} 
\end{pmatrix}.
$$
Cramer's rule, 
implies that the cusp of $C$
sits below the intersection of 
the 3 planes 
$$(\sum \sqrt[2^j]{\alpha_i} X_i =
0) \subseteq \P^3_{\bar k},\textrm{ for } j=1,2,3.$$  By symmetry, this is the
only singular point of $C$.
\end{proof}

\section{Future research directions}
\setcounter{equation}{0}
 \renewcommand{\theequation}{\arabic{section}.\arabic{equation}}

\begin{question}
  Are there regular del Pezzo surfaces with positive irregularity
  in higher characteristic, that is, for $p \geq 3$? 
\end{question}

The inequality $q \geq \frac {d (p^2 - 1)}{6}$ of \eqref{inequality}
relating the degree and irregularity becomes stronger as the
characteristic grows, but it does not rule out the existence of such
surfaces in any given characteristic.  However, the author
would find it surprising if examples exist in characteristic $p \geq
5$.  

\begin{question}\label{question-smaller-insep-degree}
  Are there regular del Pezzo surfaces $X$ with positive irregularity
  over fields $k_X = H^0(X, \OO_X)$ of inseparable degree $[k_X:k_X^p]
  \leq p^2$? 
\end{question}

As pointed out in Remark \ref{remark-field-of-definition-X1}, the
geometrically integral example $X_1$ may be constructed in
characteristic $2$ over a field of 
inseparable degree $2^3$, and the geometrically non-reduced example
was constructed over a field of inseparable degree $2^4$.
The case $[k_X:k_X^p] = p$ directly addresses a question of Koll\'ar
concerning $3$-fold contractions \cite[Rem.~1.2]{kol0}.

\begin{question}
  What is the geometry of the reduced structure on the geometric base
  change of the  example  $X_2$ constructed in \S\ref{subsection-q-1-d-2}?
\end{question}

Presumably, one could explicitly compute local presentations of the
ring of regular functions on
$(X_2)_{\bar k}^\red$, as we did for the example $X_1$ in
\S\ref{local-chart-section}.  This is left as an open exercise.

\bibliographystyle{abbrv}
\bibliography{del_pezzo}

\vspace{8em}
\begin{center}
\line(1,0){250}\\
\vspace{0.5em}
{\it
Zachary Maddock\\
Department of Mathematics\\
Columbia University\\
Room 509, MC 4406\\
2990 Broadway\\
New York, NY 10027\\
maddockz@math.columbia.edu }\\
\vspace{0.5em}
\line(1,0){250}
\end{center}

\end{document}